\newtheorem{thm}{Theorem}[section]
\newtheorem{prop}[thm]{Proposition}
\newtheorem{lem}[thm]{Lemma}
\newtheorem{cor}[thm]{Corollary}
\newtheorem{conj}[thm]{Conjecture}
\theoremstyle{definition}  
\newtheorem{defn}[thm]{Definition}
\newtheorem{ex}[thm]{Example}
\newtheorem{ques}[thm]{Question}
\newcommand\eqsubref[1]{(\subref{#1})}
\DeclareMathOperator\bridge{bridge}
\DeclareMathOperator\braid{braid}
\newcommand\bb[2]{b_{#2}(#1)}
\newcommand\bbb[1]{b({#1})}
\begin{document}

\subjclass{57K10}

\keywords{booklink, braid foliation}

\title{A spectrum connecting the bridge index and the braid index}

\author{Margaret Doig}
\address{Department of Mathematics, Creighton University}
\email{margaretdoig@creighton.edu}
\author{Chase Gehringer}
\address{Department of Mathematics, Creighton University}
\email{chasegehringer@creighton.edu}


\begin{abstract}
    We study the booklink, a braid-like embedding with local maxima and minima, and the bridge-braid spectrum of a link, which captures the smallest number of braid-strands in a booklink with a prescribed number of critical points. This spectrum spans the gap between the classical bridge and braid indices. We apply a foliation theory argument to provide a formula for the spectra of both split and composite links. We then generate a table for the spectra of all prime knots up to 9 crossings.
\end{abstract}

\maketitle

\section{Introduction}

Any student, in their first exposure to braids, will have played the game of Alexander's Theorem, starting with a knot diagram and painstakingly converting it, arc by arc, into a braid diagram. See Figure~\ref{fig:generic_knot}: we start with the diagram on the left, happily floating in front of the $z$-axis, and we check to see if the knot wraps smoothly around the axis in one direction as required. It does, with a few exceptions, the solid dots where the braid reverses direction. This knot has four arcs, two going in one direction and two in the other. To fix it, we first grab an arc going the wrong direction, then we slip it over the axis to resolve it, as in the second picture. Next, we grab the remaining arc to slip it over the axis \dots but we can't! It gets hung up on one of the crossings --- we can, though, slip half of it over, giving the third picture, and then come back for the last and slip it over, too, just the other way. 

\begin{figure}[h]\centering
    \begin{subfigure}{0.2\textwidth}\centering
        \includegraphics[scale=0.45]{./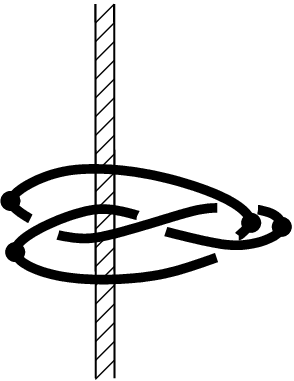}\caption{}\label{fig:fig8-20}
    \end{subfigure}
    \begin{subfigure}{0.2\textwidth}\centering
        \includegraphics[scale=0.45]{./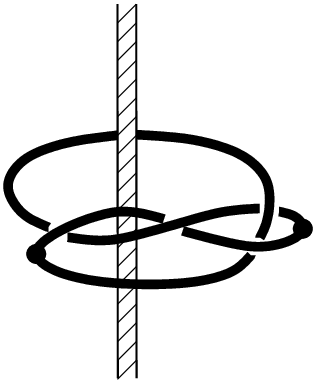}\caption{}\label{fig:fig8-11}
    \end{subfigure}
    \begin{subfigure}{0.2\textwidth}\centering
        \includegraphics[scale=0.45]{./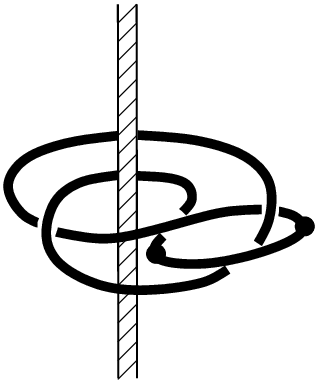}\caption{}\label{fig:fig8-12}
    \end{subfigure}
    \begin{subfigure}{0.25\textwidth}\centering
        \includegraphics[scale=0.45]{./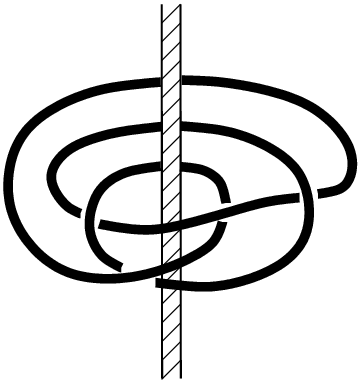}\caption{}\label{fig:fig8-03}
    \end{subfigure}
\caption{The figure 8 knot: a visualization of the application of Alexander's Theorem to convert a plat presentation to a braid.}\label{fig:generic_knot}
\end{figure}

Alexander gave us a recipe for resolving the knot into a braid, but he didn't tell us how many loops it would take, or how difficult it would be. These questions came later, and were answered later, for example, by all the powerful polynomial invariants which bound or identify the braid index. Even these invariants, though, only tell us the final result, how many loops we will need. There is much they do not encapsulate about the process which we all intuitively came to understand: for example, we can always find a first arc which may be completely resolved in one swipe (this is a result of the fact that any link can be put in plat form with at least one strand free of any crossings). Further, though, we all realized that, once the going gets tough (e.g., it takes two loops to resolve the next arc), then the going stays tough (e.g., any further arc will also require at least two loops). We have not previously had any way to describe or study this fact mathematically, but we do so here using the language of booklinks, a way to interpolate between the plat picture of the left and the braid picture of the right, as well as to study the intermediate steps as intermediate states. 

Booklinks were developed by Aranda, Binns, and one of the authors in~\cite{arandaXXXbooklink}, and studying them draws heavily on the work of Birman and Menasco studying braids by examining foliations of surfaces in $S^3$, especially the series~\cite{birman1992studying1, birman1991studying, birman1993studying, birman1990studying, birman1992studying5, birman1992studying}, as well as later work on braids in open book decompositions such as Ito and Kawamuro's extensions to braids in arbitrary open book manifolds~\cite{ito2014open,ito2014operations}. A comprehensive text on the applications of foliation theory to braids is LaFountain and Menasco's~\cite{lafountain2017braid}. Recently, Menasco and Solanki have applied foliation techniques to study plats, which lie at the other extreme of the link world~\cite{solanki2023studying,menasco2024studying}. Booklinks span this gap between braids and plats, and we adapt the techniques of foliation theory to study them.

\begin{figure}\centering
    \begin{subfigure}{0.2\textwidth}\centering
        \includegraphics{./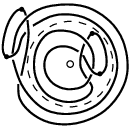} 
        \caption{}\label{fig:split_good}
    \end{subfigure}
    \begin{subfigure}{0.2\textwidth}\centering
        \includegraphics{./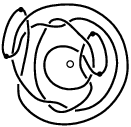} 
        \caption{}\label{fig:split_bad}
    \end{subfigure}
    \begin{subfigure}{0.2\textwidth}\centering
        \includegraphics{./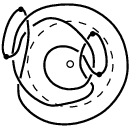} 
        \caption{}\label{fig:composite_good}
    \end{subfigure}
    \begin{subfigure}{0.2\textwidth}\centering
        \includegraphics{./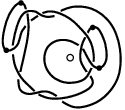} 
        \caption{}\label{fig:composite_bad}
    \end{subfigure}
    \caption{(\subref{fig:split_good}-\subref{fig:split_bad}) Two booklink representatives of a split link, one which is split as a booklink (see dotted circle) and one which is not; (\subref{fig:composite_good}-\subref{fig:composite_bad}) two representatives of a composite link, one which is composite as a booklink and one which is not.}\label{fig:split_composite}
\end{figure}

The first goal of this paper is to conceptualize the behavior of booklinks under split union and composition. A booklink representing a split link may be embedded as in Figure~\ref{fig:split_good} so that there is a clear sphere separating the two components, or as in Figure~\ref{fig:split_bad}, where the components are not clearly separable. We call the former a \emph{split booklink} (see Section~\ref{sec:split_composite}). The same can be said regarding the composite case. The question is then how to transform an arbitrary booklink representative of a split or composite link into a split or composite booklink. The Booklink Markov Theorem~\cite[Theorem~1.2]{arandaXXXbooklink} tells us that any two booklinks in the same isotopy class are connected by a sequence of booklink isotopies and stabilizations/destabilizations, but these operations can profoundly alter the booklinks and their invariants. Instead, we demonstrate below a method to do this transformation, using only booklink isotopies and exchange moves. 

\begin{thm}[Split Booklink Theorem]\label{thm:split}
Let $L$ be a split link in $S^3$ and $\lambda$ a booklink representative. Then there is a finite sequence of booklink isotopies and exchange moves which transforms $\lambda$ into $\lambda'$, a split booklink representative of $L$.
\end{thm}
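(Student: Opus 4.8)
The plan is to adapt the Birman--Menasco braid-foliation program (as organized in~\cite{lafountain2017braid}), together with the plat-foliation refinements of Menasco and Solanki~\cite{solanki2023studying,menasco2024studying}, to the open book decomposition of $S^3$ underlying the booklink structure. Since $L$ is split, fix an embedded $2$-sphere $S \subset S^3 \setminus \lambda$ separating the components of $L$. Let $A$ be the binding and $\{P_\theta\}$ the (disk) pages. Isotope $S$ into general position with respect to the open book: $S$ meets $A$ transversely, and $S \cap P_\theta$ is a $1$-manifold for all but finitely many $\theta$, with only generic tangencies otherwise. This induces a singular foliation $\mathcal{F}$ on $S$ whose singular points are elliptic (the points of $S \cap A$) and hyperbolic (the tangencies of $S$ with pages); as usual we may further assume no leaf contains two singularities and that $S$ carries no removable trivial circle leaf. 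Because $S$ is a sphere, an Euler characteristic count gives, with signs, $\#\{\text{elliptic}\} - \#\{\text{hyperbolic}\} = 2$.

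The goal is to reduce $\mathcal{F}$, using only booklink isotopies and exchange moves, to the trivial foliation with exactly two elliptic points and no hyperbolic points. Such a sphere is standardly embedded --- it meets $A$ in two points and is foliated by circles --- and the two complementary balls exhibit $\lambda$ as a split booklink $\lambda'$ in the sense of Section~\ref{sec:split_composite}. The reduction is by induction on the number of hyperbolic points. First, any trivial-loop region of $\mathcal{F}$ (a disk foliated by circles around a single elliptic point, with no strand of $\lambda$ threaded through it) is eliminated by an isotopy of $S$ supported in the complement of $\lambda$. With these gone, choose an elliptic point $v$ of minimal valence and examine the tiles of $\mathcal{F}$ incident to it. As in the braid case, a short list of local configurations must occur; for each one, either (i) $v$ is joined to a hyperbolic point along a singular leaf bounding a disk that meets $A$ only at $v$, and an isotopy of $S$ cancels this elliptic--hyperbolic pair, or (ii) one locates a region of $S$ bounded by non-singular leaves across which a change of foliation is possible, removing a hyperbolic point. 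In case (ii) the change of foliation sweeps an arc of $S$ through a family of pages; if a strand of $\lambda$ lies in those pages and blocks the sweep, it is first moved aside, and --- because $\lambda$ is disjoint from $S$ --- this is realized by a booklink isotopy together with at most one booklink exchange move, never a stabilization. Each step strictly lowers the hyperbolic count, so the process terminates, and concatenating the moves produces $\lambda'$.

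The main obstacle is precisely this control in case (ii): guaranteeing that clearing $\lambda$ out of the way of a change of foliation never forces a stabilization or destabilization of the booklink, and that the local maxima and minima of $\lambda$ --- the feature distinguishing booklinks from braids --- do not create configurations admitting no valid move. For the first point one argues, following the valence and tile analysis in the Birman--Menasco series, that the strand of $\lambda$ obstructing a minimal-valence change of foliation can always be isotoped along pages past the relevant singular leaf, which is exactly the definition of a booklink exchange move. For the second, one checks that a bridge of $\lambda$ lying near $S$ can be pushed, by booklink isotopy, into a controlled family of pages disjoint from the region being altered; here the plat-foliation arguments of~\cite{solanki2023studying,menasco2024studying} supply the required local moves, and the interplay between the monotone (braid-like) and non-monotone (bridge) portions of $\lambda$ is the place where the two theories must be genuinely combined. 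Once $\mathcal{F}$ has no hyperbolic points, $S$ is standard and the theorem follows.
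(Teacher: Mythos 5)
Your overall plan is the right one and closely parallels the paper's: put the splitting sphere $S$ into generic position with respect to the open book, analyze the induced singular foliation, and reduce it step by step using only booklink isotopies and exchange moves until $S$ meets the binding in exactly two points. You also correctly observe the key point that the exchange move is precisely what lets an obstructing strand of $\lambda$ clear a region without changing the booklink class, and that the induction terminates because each step strictly decreases a complexity count. That skeleton matches the paper.

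However, there is a genuine gap: your general-position setup only allows elliptic and hyperbolic singularities, and your Euler count ``$\#\{\text{elliptic}\}-\#\{\text{hyperbolic}\}=2$'' implicitly assumes the braid setting, where $S$ cannot be tangent to a page at a local extremum of $\theta$ (and, more strongly, where $S$ must intersect the binding). For booklinks neither of these holds. Since $\lambda$ may have critical points, an entire split component can live in a ball disjoint from the binding, so the splitting sphere may have \emph{no} elliptic points at all, being foliated by circles with exactly two extremal (center) singularities --- the paper's type E-E case --- and the Euler count in the presence of $c$ extrema becomes $e+c-h=2$, not $e-h=2$. Your proof never addresses extremal singularities or the sphere-disjoint-from-the-axis case, and an induction on hyperbolic points alone will not detect or fix it. Moreover, when you cancel an elliptic--hyperbolic pair in your case (i), the isotopy typically reintroduces an extremal singularity together with a band of circle leaves (the paper's type E-H configuration), so even starting from a sphere without extrema the induction can produce one; the paper handles this with a dedicated lemma (Lemma~\ref{lem:scc}) showing those circle leaves can be removed without creating new elliptic points. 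Without an analogue of this lemma and without treating the E-E case directly (the paper simply pushes a disk of $S$ through the binding to create two elliptic points), your reduction is incomplete. Finally, a smaller point: the paper's use of the Euler characteristic is not the global count $e-h=\chi$ you state but the graph inequality $2V_2+V_3\geq 8$ on the tile multigraph, which is what actually produces a low-valence elliptic point to act upon; you would want the latter form to make your ``choose an elliptic point of minimal valence'' step effective.
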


\begin{thm}[Composite Booklink Theorem]\label{thm:composite}
Let $L$ be a composite link in $S^3$ and $\lambda$ a booklink representative. Then there is a finite sequence of booklink isotopies and exchange moves which transforms $\lambda$ into $\lambda'$, a composite booklink representative of $L$.
\end{thm}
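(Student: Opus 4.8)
The plan is to mirror the proof of Theorem~\ref{thm:split}, using a decomposing sphere $S$ for the connect sum in place of the splitting sphere. The essential new feature is that $S$ meets $\lambda$ transversally in exactly two points $p_1,p_2$, and carrying these two marked points along through the argument is what separates the composite case from the split case. First I would put $S$ in general position with respect to the open book decomposition of $S^3$: arrange $S$ transverse to the binding, transverse to all but finitely many pages, tangent to the remaining pages at finitely many saddle points, and with $p_1,p_2$ lying in the interiors of nonsingular leaves. This endows $S$ with the usual singular braid foliation, whose singularities are elliptic points (where the binding punctures $S$) and hyperbolic points (the saddle tangencies). The index count on $S^2$ gives $e-h=2$, where $e$ and $h$ count elliptic and hyperbolic points. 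Standard innermost-leaf arguments then remove closed leaves and leaves both of whose ends lie at a single elliptic point, so that the foliation becomes a tiling of the sphere; all of this is done by an isotopy of $S$ rel $\lambda$, so $\lambda$ itself is unchanged.

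The heart of the proof is to reduce the number $h$ of hyperbolic points to zero using only booklink isotopies and exchange moves. Here I would run the booklink analogue of the Birman--Menasco reduction used to prove Theorem~\ref{thm:split}: locate a region of the tiling of least complexity --- a bigon, or an elliptic vertex of valence one or two --- and eliminate it. Bigons and changes of foliation are realized by isotopies of $S$ rel $\lambda$; a valence-one or valence-two elliptic vertex is removed by an exchange move on $\lambda$, possibly after a preliminary change of foliation to bring the local picture into the configuration where an exchange move applies. The two marked points obstruct exactly those tiles that contain $p_1$ or $p_2$, so, as in the split case, I would argue from $e-h=2$ together with the fact that at most two tiles are obstructed that a reducing tile disjoint from $\{p_1,p_2\}$ always exists; iterating drives $h$ to $0$ and hence $e$ to $2$. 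At that point $S$ meets the binding in exactly two points and each page it meets in a single arc.

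Finally, with $S$ in this standard position, a last booklink isotopy places $p_1$ and $p_2$ so that the two balls cut off by $S$ inherit compatible ``half-books,'' each meeting $\lambda$ in a single booklink arc; this exhibits $\lambda'$ as a booklink connect sum and hence as a composite booklink representative of $L$, as in Figure~\ref{fig:composite_good}.

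I expect the main obstacle to be the reduction step, and in particular the interplay between the exchange and change-of-foliation moves and the two puncture points: one must verify both that a reducing tile disjoint from $p_1,p_2$ is always available (the index count alone may not suffice in degenerate configurations and could require an auxiliary change of foliation to create one) and that no move ever pushes $\lambda$ across $S$, which would destroy the connect-sum decomposition. A secondary point to check is the base case $h=0$: that a sphere meeting the binding twice and each page in a single arc genuinely certifies booklink compositeness, which comes down to recognizing the two resulting $1$-tangles as booklink arcs.
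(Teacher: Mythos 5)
Your outline matches the paper's high-level strategy (foliate the decomposing sphere, remove closed leaves, reduce to two elliptic points via changes of foliation and exchange moves, keeping the two punctures out of the way), and you have correctly flagged the crux: showing that a reducing configuration away from $p_1,p_2$ always exists. There are, however, several concrete gaps, and the main one is serious.

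\emph{The closed-leaf removal cannot be done rel $\lambda$.} You assert that removing closed leaves is ``an isotopy of $S$ rel $\lambda$, so $\lambda$ itself is unchanged.'' This is the Birman--Menasco argument for braids, where it works because a closed-axis braid cannot puncture the disk $D_\theta \subset H_\theta$ cut off by the innermost circle, so one may surger $S$ through $D_\theta$ without touching the braid. For a booklink this fails: the booklink may cross $D_\theta$ (indeed, a plat summand may be entirely contained in the ball bounded by $D_\theta$ together with a piece of $S$). The paper handles this by pushing both the surface \emph{and} the portion of $\lambda$ inside this ball past the singular page, checking that the critical points of $\lambda$ are preserved so it is a booklink isotopy (Lemma~\ref{lem:scc}); when a piece of $\lambda$ actually interrupts the path family in the H--H case, the cylinder of closed leaves has to be cut into sub-cylinders and resolved in pieces (Figure~\ref{fig:scc_hh-v2}). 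Relatedly, you have not considered extremal tangencies (local max/min of $\theta|_S$ on the interior), which exist for booklinks even though they never arise for braids. In particular $S$ may be entirely disjoint from the binding, $e=0$, which breaks your $e-h=2$ index count; this E--E case has to be treated separately (push a small disk of $S$ through the binding to create the first pair of elliptic points).

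\emph{The puncture bookkeeping and the missing lemma.} Each puncture $p_i$ lies in a product family of nonsingular leaves, which borders two tiles and up to four elliptic points; so the two punctures can render up to four elliptic points and their surrounding tiles ``bad,'' not two as you estimate. The index count $2V_2 + V_3 \geq 8$ therefore only guarantees a vertex of valence $2$ or $3$ somewhere, not one that is clear of the punctures. The paper's fix is a separate lemma (Lemma~\ref{lem:nudgeslidepush}) permitting a puncture to be slid off a product neighborhood by redefining the tile division, or even pushed across a singular leaf of at least one adjacent tile when the two adjacent saddles have opposite sign --- precisely the configuration at a valence-$2$ vertex. With that in hand, a bad low-valence vertex can be cleaned up before applying the exchange move. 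You anticipated needing an ``auxiliary change of foliation'' here, but a change of foliation alone does not relocate a puncture; what is needed is this puncture-sliding booklink isotopy. Your final paragraph correctly identifies that one must never push $\lambda$ across $S$, but without the explicit mechanism for moving punctures the reduction step as written does not close.
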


These two results mirror and extend two previous sets of results: Birman and Menasco originated this conversation with the Split and Composite Braid Theorems~\cite[p.~117--118]{birman1990studying}, which also substitute exchange moves for stabilization/destabilization in order to preserve the braid index during this transformation. More recently, Menasco and Solanki continued this pattern of study with the Split and Composite Plat Theorems~\cite[Theorems~1--2]{menasco2024studying}, which replace stabilizations/destabilizations with a set of specialized moves on plat diagrams that preserve the bridge index (pocket moves, flip moves, and double coset moves).

Our second goal is to understand the bridge-braid spectrum, a new invariant for links that spans the gap between the classical bridge and braid indices. At its heart, it traces the conversion of a plat diagram into a braid via Alexander's Theorem as shown in Figure~\ref{fig:generic_knot}. 

Aranda, Binns, and Doig introduced the spectrum in~\cite{arandaXXXbooklink}. In order to study it, we first apply the theorems above to derive the spectra of split and composite links, encompassing both Birman and Menasco's Braid Index Theorem~\cite[p.~118]{birman1990studying} and Schubert's Bridge Index Theorem~\cite{schubert1956knoten}.

\begin{cor}\label{cor:splitspectrum}
    Let $L = L_1 \cup L_2$ be a link. Then 
    \[\bb{d}{L} \geq \min_{d_1 + d_2 = d} \big[\bb{d_1}{L_1}+\bb{d_2}{L_2}\big]\]
    with equality for a split link.
\end{cor}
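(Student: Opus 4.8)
The plan is to prove two inequalities: for an arbitrary link $L=L_1\cup L_2$, that $\bb{d}{L}\geq\min_{d_1+d_2=d}\bigl[\bb{d_1}{L_1}+\bb{d_2}{L_2}\bigr]$; and, when $L$ is split, the reverse inequality, so that the minimum is attained. Throughout I read $\bb{d}{L}$ as the least braid width --- the least number of braid strands --- among booklink representatives of $L$ with exactly $d$ critical points, and the minimum on the right as ranging over those decompositions $d_1+d_2=d$ for which both $\bb{d_1}{L_1}$ and $\bb{d_2}{L_2}$ are defined; for split $L$ this index set is nonempty whenever the left-hand side is, since the bridge index is additive over split unions.

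For the lower bound, fix a booklink $\lambda$ representing $L$ of braid width $\bb{d}{L}$ with $d$ critical points, and let $\lambda_1,\lambda_2\subset\lambda$ be the sublinks comprising the components that represent $L_1$ and $L_2$ respectively. A sublink of a booklink inherits its position in the book, hence is again a booklink; write $w_i$ for the braid width of $\lambda_i$ and $d_i$ for its number of critical points. Each braid strand and each critical point of $\lambda$ lies on a single component, so $w_1+w_2=\bb{d}{L}$ and $d_1+d_2=d$; and since $\lambda_i$ is a booklink for $L_i$ with $d_i$ critical points, $w_i\geq\bb{d_i}{L_i}$. Therefore $\bb{d}{L}=w_1+w_2\geq\bb{d_1}{L_1}+\bb{d_2}{L_2}\geq\min_{d_1'+d_2'=d}\bigl[\bb{d_1'}{L_1}+\bb{d_2'}{L_2}\bigr]$, which is the asserted inequality.

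Now suppose $L=L_1\sqcup L_2$ is split and choose $(d_1,d_2)$ with $d_1+d_2=d$ realizing the minimum. Take booklinks $\lambda_i$ for $L_i$ of braid width $\bb{d_i}{L_i}$ with $d_i$ critical points and combine them into a split booklink about the common binding, exactly as one forms a split closed braid from two closed braids; the result represents $L$, has braid width $\bb{d_1}{L_1}+\bb{d_2}{L_2}$ and has $d_1+d_2=d$ critical points, so $\bb{d}{L}\leq\bb{d_1}{L_1}+\bb{d_2}{L_2}$, and with the previous paragraph this forces equality. In the spirit of the braid and plat versions, one may instead re-derive the lower bound in the split case directly from Theorem~\ref{thm:split}: apply it to $\lambda$ to pass, through booklink isotopies and exchange moves --- each of which preserves braid width and the number of critical points, unlike stabilization --- to an honestly split booklink $\lambda'=\lambda'_1\sqcup\lambda'_2$, and then run the same count on $\lambda'_1$ and $\lambda'_2$.

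The step I expect to demand the most care is the bookkeeping behind the claims that a sublink of a booklink is a booklink, and that a suitable union of two booklinks is a split booklink, together with the additivity of braid width and of the critical-point count over components; these are the precise analogues of the corresponding facts for closed braids and for plats, but each must be reconciled with the exact definition of a booklink --- in particular the roles of the binding and the pages and any normalization imposed on the braided region or on the critical points. Once these are in hand the argument is formal, and the endpoints $d=0$ and maximal $d$ recover, respectively, the Braid Index Theorem of Birman and Menasco and Schubert's Bridge Index Theorem, with the spectrum interpolating monotonically between the braid and bridge indices.
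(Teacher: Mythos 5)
Your argument is correct and follows essentially the same route as the paper: the lower bound on $\bb{d}{L}$ comes from restricting an optimal $(d,n)$-representative to the two sublinks and invoking superadditivity of the page-intersection count together with $d_1+d_2=d$, while equality for a split link comes from the split booklink picture in which the braid count becomes additive. The paper packages the second half of the argument as an application of Theorem~\ref{thm:split} (transform an arbitrary representative into a split booklink without raising either index), whereas you instead give the direct construction (union two optimal booklinks in disjoint balls pierced once each by the binding, rotating to align minimal pages), mentioning the Theorem~\ref{thm:split} route only as an afterthought; both are valid, and the direct construction is arguably the more economical way to establish the $\leq$ half since it avoids quoting the harder foliation theorem.

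One small slip worth flagging: you assert $w_1+w_2=\bb{d}{L}$ where $w_i$ is the braid width of the sublink $\lambda_i$. Since $w_i$ is a minimum over pages and the minimizing pages for $\lambda_1$ and $\lambda_2$ need not coincide with the minimizing page for $\lambda$, the correct relation is $\bb{d}{L}\geq w_1+w_2$ (each page count splits as a sum, so the minimum of the sum is at least the sum of the minima). This does not affect your conclusion because your chain only uses $\geq$, but the equality as stated is false in general and should be corrected. Relatedly, the caveat about the index set of the minimum being ``where both $\bb{d_i}{L_i}$ are defined'' is unnecessary: $\bb{d_i}{L_i}$ is defined for every $d_i\geq 0$ (it is $\mathfrak n$ at $d_i=0$ and eventually $0$), so the minimum is always over the full set $\{(d_1,d_2):d_1+d_2=d,\ d_i\geq 0\}$.
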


\begin{cor}\label{cor:compositespectrum}
    Say $L = L_1 \# L_2$ is a composite link. Then
    \[\bb{0}{L} = \bb{0}{L_1} + \bb{0}{L_2} - 1.\]
    If $0 < d < \bridge(L_1) + \bridge(L_2) - 1$,
    \[\bb{d}{L} = \min_{d_1+d_2 = d+1} \big[\bb{d_1}{L_1} + \bb{d_2}{L_2}\big].\]
    Otherwise,
    \[\bb{d}{L} = 0.\]
\end{cor}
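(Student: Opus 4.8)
The plan is to prove each stated equality by establishing ``$\le$'' with an explicit construction and ``$\ge$'' with the Composite Booklink Theorem, handling the three ranges of $d$ separately and in parallel with the split case. Throughout, for a booklink $\lambda$ let $\beta(\lambda)$ denote its number of braid strands and $c(\lambda)$ its number of local maxima, so that $\bb{L}{d}$ is the minimum of $\beta(\lambda)$ over booklink representatives $\lambda$ of $L$ with $c(\lambda)=d$.

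For ``$\le$'' with $d\ge1$, I would take optimal booklinks $\lambda_i$ of $L_i$ with $c(\lambda_i)=d_i$, where $d_1+d_2=d+1$ realizes the right-hand minimum, and form their connected sum along a $2$-sphere in the manner of Schubert's plat construction: open up a local maximum of one factor (available since $d_1+d_2\ge2$) and reattach through the other so that exactly one critical point of the pair is absorbed into the gluing region. This should yield a composite booklink for $L_1\#L_2$ with $\beta(\lambda_1)+\beta(\lambda_2)=\bb{L_1}{d_1}+\bb{L_2}{d_2}$ braid strands and $d_1+d_2-1=d$ local maxima; minimizing over such $d_1,d_2$ gives the bound. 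When $d=0$ the booklinks in play are closed braids with no maximum to open, and the bound $\bb{L}{0}\le\bb{L_1}{0}+\bb{L_2}{0}-1$ is realized by splicing one strand of a minimal braid for $L_1$ into one for $L_2$, the matching lower bound being Birman and Menasco's braid index theorem for composite links. When $d\ge\bridge(L_1)+\bridge(L_2)-1$ I would compose $0$-strand (plat) representatives by the classical plat connected sum, which realizes Schubert's $\bridge(L_1\#L_2)=\bridge(L_1)+\bridge(L_2)-1$, and pad with trivial maximum--minimum pairs to obtain a $0$-strand booklink with exactly $d$ local maxima.

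For ``$\ge$'' with $0<d<\bridge(L_1)+\bridge(L_2)-1$, I would start from a booklink $\lambda$ realizing $\bb{L}{d}$ and apply Theorem~\ref{thm:composite} to turn it, through booklink isotopies and exchange moves, into a composite booklink $\lambda'$; since these operations preserve the braid index and leave the number of critical points unchanged, $c(\lambda')=d$ and $\beta(\lambda')=\bb{L}{d}$. Cutting $\lambda'$ along its decomposing sphere --- taken in standard position, so that it meets $L$ with opposite slopes at its two points --- yields two tangles, which I would close into booklinks $\lambda_1',\lambda_2'$ of $L_1,L_2$ by capping one side with a local maximum and the other with a local minimum. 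The braid strands of $\lambda'$ then partition between the two sides, $\beta(\lambda_1')+\beta(\lambda_2')=\bb{L}{d}$, while the capping adds exactly one local maximum overall, $c(\lambda_1')+c(\lambda_2')=d+1$; writing $d_i=c(\lambda_i')$ and using $\bb{L_i}{d_i}\le\beta(\lambda_i')$ gives $\bb{L}{d}=\beta(\lambda_1')+\beta(\lambda_2')\ge\bb{L_1}{d_1}+\bb{L_2}{d_2}\ge\min_{d_1+d_2=d+1}[\bb{L_1}{d_1}+\bb{L_2}{d_2}]$. With ``$\le$'' this settles the middle range; the $d=0$ case was handled above, and the case $d\ge\bridge(L_1)+\bridge(L_2)-1$ follows from the construction above and the trivial bound $\bb{L}{d}\ge0$.

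I expect the crux to be the local analysis at the decomposing sphere, which underlies both directions: verifying that ``opening up'' a critical point of one factor and reattaching through the other is a legitimate booklink move realizing the connected sum with the advertised $(\beta,c)$, and conversely that a composite booklink's sphere can be placed in a standard position for which cutting and capping returns honest booklinks with the braid strands additive and the number of local maxima increased by exactly one. That last step --- being forced to cap one tangle with a local maximum and the other with a local minimum --- is precisely where the shift $d\mapsto d+1$ originates, and its impossibility when $d=0$ (there being no critical point to open) is what makes the separate $-1$ formula necessary there. One must also check that exchange moves, built to fix the braid index, neither create nor destroy critical points and do not interfere with the turnbacks. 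The remaining points --- that the minima are over nonempty finite sets and that Schubert's connected-sum formula supplies the cutoff in the last range --- are routine.
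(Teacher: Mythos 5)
Your proposal is structurally parallel to the paper's actual argument: upper bound by an explicit splicing construction, lower bound by applying Theorem~\ref{thm:composite} and cutting along the resulting sphere, with the arithmetic of the bridge shift driven by the parity of critical points on each tangle. The two ends of the range, $d=0$ (Birman--Menasco) and $d\geq\bridge(L_1)+\bridge(L_2)-1$ (Schubert), are handled the same way the paper does.

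However, there is a genuine gap in the middle range, and it is exactly the place you flag as a ``crux'' but never fill. After applying Theorem~\ref{thm:composite}, the splitting sphere $S$ of the resulting composite booklink is in foliated general position, but there is no reason it meets $\lambda$ ``with opposite slopes at its two points.'' The two punctures can have the same orientation with respect to the binding --- indeed, for $d=0$ they must, since a braid has no critical point at which to reverse. For $d>0$ the paper must do real work here: it splits into the opposite-orientation case (each tangle has an odd number of critical points, so closing adds one to each side, yielding $d=d_1+d_2-1$ and $n=n_1+n_2$) and the same-orientation case (three sub-ways to cap, the best of which gives $d=d_1+d_2$ and $n=n_1+n_2-1$), then shows both that a same-orientation sphere can be isotoped to an opposite-orientation one by pushing a finger of $S$ along $\lambda$ past a critical point without hitting the binding, and that the resulting formula dominates the same-orientation one after minimizing. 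Your proposal bakes all of that into the phrase ``taken in standard position'' and then lists it among the things one ``must check.'' Since the whole content of the shift $d\mapsto d+1$ in the intermediate range rests on this reduction, asserting it without proof is a hole, not a routine verification.

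Two smaller issues worth flagging: in the upper bound you do not note that the two minimizing representatives $\lambda_1,\lambda_2$ must be arranged to realize their minimal page intersection numbers on the same page $H_{\min}$ before splicing, which the paper addresses explicitly; and your description of ``opening up a local maximum of one factor'' and absorbing ``one critical point of the pair'' is imprecise --- what actually happens in the opposite-orientation construction is that one critical point is removed from \emph{each} factor, so the pair count drops by one overall ($d=d_1+d_2-1$), not that one of a max--min pair is absorbed.
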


We then collect results on the behavior of the spectra and complete its calculation for prime knots through 9 crossings, which we present in Table~\ref{table:knot_spectra}:

\begin{thm}\label{thm:knot_spectra}
Table~\ref{table:knot_spectra} lists the bridge-braid spectra for all knots through 9 crossings.
\end{thm}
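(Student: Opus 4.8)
The plan is to pin down the full spectrum $\bigl(\bb{0}{K}, \bb{1}{K}, \dots\bigr)$ one knot at a time, using the structural results of the preceding sections to reduce almost everything to bookkeeping and finishing the remaining entries with explicit booklinks. Two anchors come for free: a booklink with no critical points is an ordinary closed braid, so $\bb{0}{K} = \braid(K)$, while a booklink with no braid strands is a plat, so $\bb{d}{K} = 0$ exactly when $d \ge \bridge(K)$; in addition, flattening a booklink with $d$ maxima and $b$ braid strands to a plat on at most $b + d$ bridges gives $\bb{d}{K} \ge \bridge(K) - d$, and the spectrum is convex --- this is the ``once the going gets tough it stays tough'' phenomenon from the introduction. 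First I would record, from the standard tables, the braid and bridge indices of all prime knots through nine crossings, observing that every such knot is $2$- or $3$-bridge. For a $2$-bridge knot the only unknown entry is $\bb{1}{K}$, and convexity traps it in $1 \le \bb{1}{K} \le \lfloor \braid(K)/2\rfloor$; this already forces $\bb{1}{K} = 1$ whenever $\braid(K) \le 3$. The same count for a $3$-bridge knot determines its two interior entries whenever the braid index is small, so only finitely many knots --- those with braid index large relative to bridge index --- survive with an undetermined entry.

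For each surviving knot I would establish the claimed value by matching bounds. The upper bound is a construction: starting from a minimal plat diagram (or a minimal braid), I would perform a controlled, partial Alexander conversion --- resolving only enough arcs to reach the prescribed number $d$ of critical points while ``folding'' the rest of the diagram across the axis --- and count the resulting braid strands, aiming to hit the value predicted by convexity. The lower bound uses the tools already in hand: the inequality $\bb{d}{K} \ge \bridge(K) - d$, convexity propagated inward from the two anchors (so that $\bb{d-1}{K} - \bb{d}{K} \ge \bb{d}{K} - \bb{d+1}{K}$ constrains the interior), and, where these are not enough, a Morton--Franks--Williams style estimate: a booklink of $b$ strands and $d$ maxima resolves to a braid on a bounded number of strands, so the spread of the HOMFLY polynomial of $K$ forces a lower bound on $b$; failing even that, the braid-foliation machinery developed for Theorems~\ref{thm:split} and~\ref{thm:composite} can be adapted to obstruct a too-small strand number directly. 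In practice I expect the explicit folding constructions to meet the convexity bound in every remaining case, so that the table is determined by the construction together with the structural inequalities.

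The step I expect to be the real obstacle is this lower bound: there is no booklink analogue of Morton--Franks--Williams that is as sharp for booklinks as it is for honest braids, so for any knot where convexity together with $\bb{d}{K} \ge \bridge(K) - d$ still leaves two candidate values I would be forced either to produce a genuinely optimal booklink realizing the smaller value or to find an ad hoc obstruction, rather than to accept a gap in the table. Once every entry is certified, the write-up is Table~\ref{table:knot_spectra} itself, a short paragraph verifying that convexity and the two anchors force all but the listed entries, the handful of explicit booklink diagrams certifying the rest, and a final consistency pass --- checking $\bb{0}{K} = \braid(K)$, the plat bound $\bb{d}{K} = 0$ for $d \ge \bridge(K)$, and, as an external test, compatibility with Corollaries~\ref{cor:splitspectrum} and~\ref{cor:compositespectrum} for connected sums assembled from table entries --- to catch transcription errors in an $84$-row table.
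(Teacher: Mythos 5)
Your overall skeleton is the same as the paper's: fix the spectrum with structural results where possible, then certify the residual entries with explicit booklink diagrams. But there are two substantive problems. First, your lower-bound toolkit treats \emph{convexity} of the spectrum as available, when in fact the paper states it only as Conjecture~\ref{conj:concavity} --- it is not proven anywhere, and you may not use it to pin down entries. Second, and more consequentially, you miss the one-line fact that does all of the heavy lifting for $2$-bridge knots: a classical $2\mathfrak{d}$-plat always has a bridge strand free of crossings, so one can push that strand through the binding to obtain a $(\mathfrak{d}-1,1)$-representative, giving $\bb{\mathfrak{d}-1}{K}=1$ for \emph{every} link (Proposition~\ref{prop:endings}). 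Without this, your two anchors $\bb{0}{K}=\mathfrak{n}$, $\bb{\mathfrak{d}}{K}=0$, the inequality $\bb{d}{K}\geq\mathfrak{d}-d$, and (the unproven) convexity only force $\bb{1}{K}=1$ when $\mathfrak{n}\leq 3$; for the many $2$-bridge knots in the table with $\mathfrak{n}=4$ or $5$ you are left with $\bb{1}{K}\in\{1,2\}$, and your plan would then require you to draw dozens of additional optimal $(1,1)$-booklinks. The paper needs none: Corollary~\ref{cor:2-bridge} disposes of all $2$-bridge knots at once.

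With Proposition~\ref{prop:endings} and strict monotonicity (Lemma~\ref{lem:monotonic}) in hand, the only genuinely open entry for the remaining (all $3$-bridge) knots is $\bb{1}{K}$, and the lower bound $\bb{1}{K}\geq 2$ is immediate from $\bb{1}{K}>\bb{2}{K}=1$; the upper bound $\bb{1}{K}\leq 2$ is then supplied by the $25$ explicit $(1,2)$-diagrams of Figure~\ref{fig:missing}, verified against KnotInfo by their DT codes. So the lower bound you flagged as ``the real obstacle'' is in fact trivial for every knot in the table, and the Morton--Franks--Williams-style estimates and foliation obstructions you propose as a fallback are never needed. Your plan would eventually arrive at the same table, but only after producing many unnecessary diagrams and, in several places, appealing to a conjecture you are not entitled to use.
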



\subsection*{Outline}
In Section~\ref{sec:defn}, we define booklinks, split and composite booklinks, and the bridge-braid spectrum of a link. In Section~\ref{sec:split}, we show how to derive the spectrum of a split link from the spectra of its component links; in Section~\ref{sec:composite}, we do the same for composite links. Finally, in Section~\ref{sec:spectra}, we demonstrate some general results about spectra, produce 25 booklink diagrams, and calculate the spectra of prime knots through 9 crossings. In Section~\ref{sec:futurework}, we summarize possible future directions and outstanding questions.

\subsection*{Acknowledgments}
Thanks to Rom\'an Aranda for helpful comments and feedback.

\section{Definitions}\label{sec:defn}

\subsection{Booklinks and surfaces}
A \emph{knot} in $S^3$ is a smooth embedding of $S^1$ into $S^3$, up to isotopy, and a \emph{link} is a smooth embedding of $\cup^n S^1$. A special kind of link is a \emph{braid} in $R^3$, which wraps smoothly in one direction around an axis. More precisely, when the knot is given in cylindrical coordinates, where $\theta$ is the angle, then $\theta$ has no critical points. In \cite{arandaXXXbooklink}, Aranda, Binns, and Doig extended the definition of a braid to a \emph{booklink}, which is a link where $\theta$ is allowed to have non-degenerate critical points. See Figure~\ref{fig:generic_knot}, where the solid are critical points. We will regularly consider booklinks up to \emph{booklink isotopy}, which is an isotopy through booklinks; in particular, the number of critical points remains constant. When we reference a \emph{booklink type}, we mean an equivalence class of booklinks under booklink isotopy. These concepts can be extended to open book 3-manifolds: if a manifold has an open book decomposition, then a braid is a link which is never tangent to a page, and a booklink is a link which is only tangent to pages in a finite set of non-degenerate critical points.

We will study booklinks and surfaces related to them in part by examining the role of $\theta$ in cylindrical coordinates. The \emph{page} $H_\theta$ is the set of all points away from the $z$-axis which have angle $\theta$, and a \emph{leaf} on some surface is the intersection of the surface with some $H_\theta$. We may consider the \emph{foliation} of a surface, the set of all such leaves on the surface. We will study foliations and critical points of surfaces below. 

\subsection{Invariants}
There are two numbers associated to a given booklink, $d$, the number of pairs of non-degenerate critical points, and $n$, the minimum geometric intersection number of the booklink with any page $H_{\theta}$ of the open book decomposition; sometimes we will fix a page that realizes this minimum and call it $H_{min}$. We call this booklink a \emph{$(d,n)$-representative} of its booklink type. The number $d$ is invariant under booklink isotopy, and so we extend it to an invariant of the booklink isotopy class itself and call it the \emph{bridge number}. The number $n$, however, may change under booklink isotopy, and so we designate the \emph{braid number} of a booklink type to be the minimum $n$ achievable in the isotopy class, that is, the minimum number of intersections of any booklink in the class with any page $H_\theta$. With this terminology, a braid is a booklink with $d=0$, equivalently, a booklink with only $(0,n)$-representatives. At the other extreme, a plat is a booklink with braid number $0$, i.e., one which has $(d,0)$-representatives (although some representatives in the booklink type may have larger $n$). In Figure~\ref{fig:generic_knot}, the four booklinks are $(2,0)$-, $(1,1)$-, $(1,2)$-, and $(0,2)$-representatives from left to right; all achieve the braid number for their booklink type. 

The \emph{bridge-braid spectrum} of a link type $L$ is the sequence \[\big\{\bb{0}{L}, \bb{1}{L}, \cdots, \bb{d}{L}, \cdots\big\}\] where $\bb{d}{L}$ is the minimum braid index for any booklink representative of $L$ that has bridge index $d$, i.e., the minimum $n$ such that there is a $(d,n)$-representative for the booklink:
\[\bb{d}{L} = \min_{\substack{\lambda \in L \\ \bridge(\lambda) = d}} \braid(\lambda).\]
Note that this sequence is eventually 0, and so we often abbreviate it by omitting all zeroes after the first one.

\subsection{Split and composite links}\label{sec:split_composite}
Two special types of links we will study are split and composite.

A link $L$ is a \emph{split link} if there exists an embedded sphere that separates $L$ into two components, called $L_1$ and $L_2$. We write $L$ as $L_1 \cup L_2$. In particular, a braid or a booklink represents a split link if there exists such a splitting sphere. In addition, some booklinks are split in an especially obvious way where we may easily see the splitting sphere, and we call these \emph{split booklinks}, where the splitting sphere has exactly two intersections with the $z$-axis, and both are transverse. See Figure~\ref{fig:split_composite}. The definition encapsulates both split braids and split plats, which satisfy the equivalent requirements.

A link $L$ is a \emph{composite link} if there exists an embedded sphere that intersects the link transversely in two points and divides the link into two non-trivial arcs $L_1'$ and $L_2'$, that is, if we close $L_i'$ by adding an arc in the sphere to form a link $L_i$, then it will be non-trivial. We then write $L$ as $L_1 \# L_2$. In particular, a braid or a booklink may be a composite link if there exists such a splitting sphere. In addition, some braids and booklinks are composite in an especially obvious way where we may easily see the splitting sphere, and we call these \emph{composite booklinks}, where the splitting sphere has exactly two intersections with the $z$-axis, and both are transverse. See Figure~\ref{fig:split_composite}. Once more, these include the classical concepts of composite braids and composite plats.

Observe that the notation $\lambda_1 \#\lambda_2$ is not well-defined in the same way that $L_1 \# L_2$ is. Given two oriented link types, the composite is independent of the location where the sum is accomplished because, if we wish to sum the sublinks in a different location, we merely need to shrink one summand into a tiny ball and slide it along the other summand to the new location. The same is true for braids. For booklinks, by contrast, a connected sum is more complicated: we cannot slide a summand over a critical point without changing the booklink class; additionally, if we perform the sum without care, we may alter the braid and/or bridge index. The sum $\lambda_1\#\lambda_2$ is only well-defined up to the critical point or arc between critical points where the sum occurs. 

\subsection{Exchange Moves}
When Birman and Menasco proved the Split Braid Theorem and the Composite Braid Theorem, they defined the \emph{exchange move} to alter a braid without affecting the number of strands or its isotopy class~\cite{birman1990studying}. When we prove the corresponding theorems for booklinks, we will use the same tool extended to booklinks, the exchange move shown in Figure~\ref{fig:exchange}. Each solid arc shown in the figure may in fact represent a thin tube inside which live multiple strands, and these strands may be locally knotted inside the tube. Observe that the exchange move, like the booklink isotopy, preserves both bridge and braid index.

\begin{figure}
\begin{subfigure}[c]{0.45\textwidth}\centering
\includegraphics[scale=0.7]{./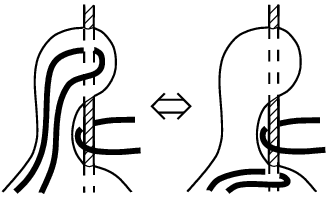}\label{fig:exchange_side}
\end{subfigure}
\begin{subfigure}[c]{0.45\textwidth}\centering
\includegraphics[scale=0.7]{./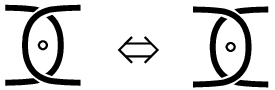}\label{fig:exchange_overhead}
\end{subfigure}
\caption{An exchange move, from the side and from above.}\label{fig:exchange} 
\end{figure}

\section{Split Links} \label{sec:split}

Theorem~\ref{thm:split} is a generalization of Birman and Menasco's result for braids~\cite[Split Braid Theorem]{birman1990studying}, that any braid which is split can be transformed by a set of reasonable moves into a braid which is split in a very obvious way.



\begin{figure}
\begin{subfigure}{\textwidth}\centering
    \includegraphics[scale=0.3]{./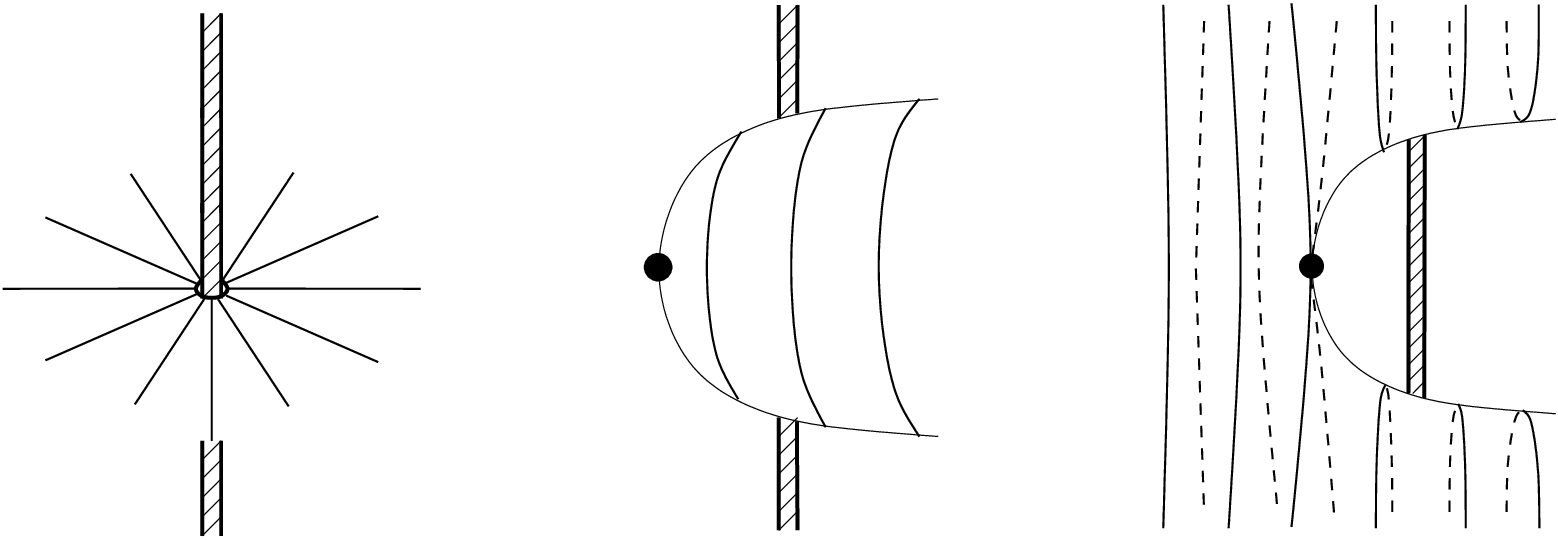}\caption{}\label{fig:singularities}
\end{subfigure}
\par\bigskip
    \begin{subfigure}{\textwidth}\centering
        \includegraphics[scale=0.2]{./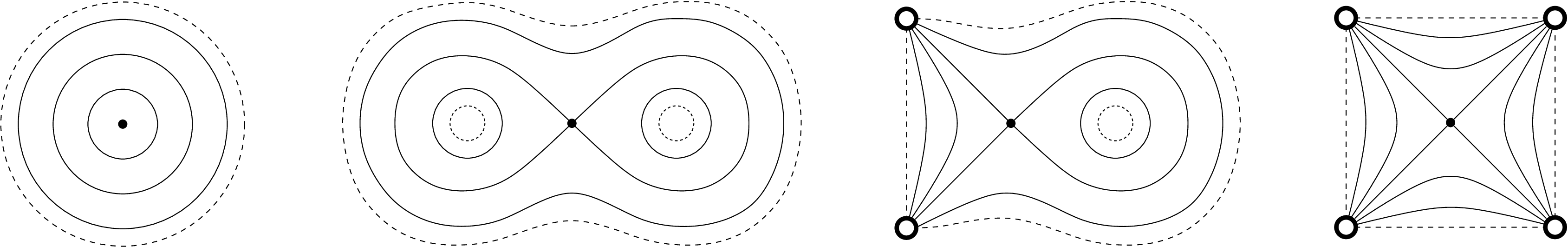}\subcaption{}\label{fig:tiles}
    \end{subfigure}
    \caption{\eqsubref{fig:singularities} A foliated closed surface may include three types of singular points: elliptic points, extremal singularities, and hyperbolic singularities. \eqsubref{fig:tiles} It may be tiled by neighborhoods of the latter two, which come in four flavors.}\label{fig:foliation}
\end{figure}

To prove the theorem, we will first identify a splitting sphere $S$ for the booklink $\lambda$ and, by Lemma~\ref{lem:genposn}, we will put it into general position so that the foliation induced by the open book decomposition of $S^3$ is sufficiently nice. We will then induct on the number of elliptic points. By Lemma~\ref{lem:scc}, we will first eliminate SCCs in the foliation; by Lemma~\ref{lem:nonessential}, we will alter the surface via booklink isotopy to cancel pairs of elliptic points identified by the presence of a particular type of leaf. Finally, we will apply an Euler characteristic argument in Lemma~\ref{lem:exchange} to identify exchange moves to alter the splitting sphere until it has exactly 2 elliptic points, at which point the theorem is satisfied.

\begin{defn}
    We say a surface $\Sigma$ in an open book manifold with a booklink $\lambda$ is in \emph{foliated general position} if:
 \begin{itemize}
  \item $\Sigma$ intersects the braid axis in a finite number of points, where it is transverse and has a radial foliation, which we call \emph{elliptic points};
  \item $\Sigma$ is transverse to all but finitely many pages, each of which has exactly one singularity;
  \item $\Sigma$ is tangent to the surface at only non-degenerate singularities, at either \emph{extremal singularities} (minima or maxima with respect to the open book) or \emph{hyperbolic singularities} (saddles with respect to the open book);
  \item Any intersections between $\lambda$ and $\Sigma$ occur away from singularities of $\Sigma$ and critical points of $\lambda$.
 \end{itemize}
\end{defn}

Observe that this definition is similar to the concept of an open book foliation described in \cite[Definition~3.3]{arandaXXXbooklink}, with the exception that we allow extremal singularities in the interior of the surface and restrict the intersections with $\lambda$. 

\begin{lem}\label{lem:genposn}
A closed surface in an open book manifold containing a booklink may be put into foliated general position via booklink isotopy.
\end{lem}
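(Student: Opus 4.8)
The plan is to obtain foliated general position by a standard perturbation argument, handling each bullet of the definition in turn, and then cleaning up the few ways in which a generic surface can fail to meet the list. First I would start with an arbitrary smooth embedding of the closed surface $\Sigma$ in the open book manifold, together with the booklink $\lambda$. Since the braid axis $A$ (the binding of the open book) is a $1$-manifold and $\Sigma$ is a closed surface, a small ambient isotopy makes $\Sigma$ transverse to $A$, so $\Sigma \cap A$ is a finite set of points; near each such point the fibration angle $\theta$ restricts to $\Sigma$ as a function with an isolated critical point, and after a further local perturbation we may assume this critical point is nondegenerate, i.e.\ the foliation is radial there. This gives the elliptic points. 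Crucially this is the one place where we use booklink (as opposed to arbitrary) isotopy only mildly — moving $\Sigma$ near $A$ does not move $\lambda$ if we do it in a neighborhood disjoint from $\lambda$, which we can arrange since $\lambda \cap A = \emptyset$.

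Next I would address the tangencies with the pages. Away from $A$, the pages foliate the complement of the binding, so the restriction $\theta|_{\Sigma \setminus A}$ is a smooth circle-valued Morse-type function; a generic small perturbation of $\Sigma$ (supported away from $A$ and away from $\lambda$) makes $\theta|_\Sigma$ into a Morse function in the circle-valued sense, so that all interior tangencies of $\Sigma$ with pages are nondegenerate critical points — centers (extremal singularities, the maxima and minima of $\theta|_\Sigma$) and saddles (hyperbolic singularities) — and moreover the critical values are distinct, so each exceptional page carries exactly one singularity. The leaves of the foliation near elliptic points, centers, and saddles then take exactly the standard local forms, which is what lets the surface be tiled by the four tile types of Figure~\ref{fig:tiles} once SCCs are dealt with (though that tiling itself is not needed for this lemma, only the local normal forms are). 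Finally, because $\lambda$ is a booklink, its critical points occur on only finitely many pages and the intersection $\lambda \cap \Sigma$ is a finite set; a last small isotopy of $\Sigma$ (or, if one prefers, a booklink isotopy of $\lambda$ in a neighborhood of $\Sigma$) pushes these intersection points off the singular pages of $\Sigma$ and off the critical pages of $\lambda$, and arranges that no intersection point of $\lambda$ with $\Sigma$ coincides with a singularity of $\Sigma$. Each of these is an open dense condition, so finitely many such perturbations can be made compatible.

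The main subtlety — and the step I expect to be the real obstacle to write carefully — is that all of these perturbations must be realized by \emph{booklink isotopies}, i.e.\ isotopies of the ambient manifold that preserve the booklink structure of $\lambda$ (in particular do not create or cancel critical points of $\theta|_\lambda$), rather than by arbitrary isotopies that might disturb $\lambda$. The resolution is that every perturbation above can be taken to be supported in an arbitrarily small neighborhood of $\Sigma$ (or of the offending features of $\Sigma$) that avoids $\lambda$, except for the very last step where we must move intersection points of $\lambda$ with $\Sigma$; there we instead perturb $\lambda$ itself, and since the moves are local and small they do not change the booklink type of $\lambda$ — they just slide intersection points along $\lambda$ into generic position. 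I would also need to check transversality is preserved at each stage (each new perturbation is small enough not to destroy the conditions already achieved), which is routine since each condition is open. Packaging this as a finite sequence of compactly-supported isotopies, and invoking standard transversality and Morse-theory genericity (in the circle-valued setting, e.g.\ via the parametric transversality theorem applied to the evaluation of $\theta$ on $\Sigma$), completes the proof.
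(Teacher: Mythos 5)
Your proposal is correct and follows essentially the same route as the paper: a standard general-position and Morse-genericity argument, perturbing $\Sigma$ near the axis, then away from it, then nudging the $\lambda \cap \Sigma$ intersections off singular leaves and critical points. You flesh out (correctly) the point the paper leaves implicit — that all perturbations can be made supported away from $\lambda$, or in the last step carried out as small local isotopies of $\lambda$ that preserve its critical points — but the underlying argument is the one the paper gives.
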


\begin{proof}
    We may assume by a general position argument that the surface intersects the braid axis in a finite number of points where it is transverse and has a radial foliation. Likewise, we may perturb the surface to ensure that it is transverse to all but finitely many pages and that each singular page has exactly one singularity. See Figure~\ref{fig:singularities}. Finally, if the booklink and the surface intersect, we may perturb the surface or the booklink to move the intersections off the singular leaves and the critical points.
\end{proof}

Next, we observe that the foliation is made up of several distinct types of pieces.

\begin{defn}
    The complement of the singularities (both elliptic points and extremal/hyperbolic singularities) and the singular leaves they pass through is a finite number of disconnected product neighborhoods of nonsingular leaves. We may identify a single nonsingular leaf in the middle of each of these product neighborhoods and envision dividing the surface along these chosen leaves into regions we call \emph{tiles}.
\end{defn}

\begin{lem}\label{lem:tiles}
    If a closed surface $\Sigma$ is in foliated general position, then its foliation can be decomposed into the four tiles of the type found in Figure~\ref{fig:tiles}.
\end{lem}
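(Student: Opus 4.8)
The plan is to follow the standard braid foliation strategy, reducing the lemma to the local structure of a circle-valued Morse function. By Lemma~\ref{lem:genposn} I may assume $\Sigma$ is in foliated general position; then, away from the elliptic points, $\theta|_\Sigma$ is a circle-valued Morse function whose critical points are precisely the extremal and hyperbolic singularities, no two of which lie on the same page, while near each elliptic point $\theta|_\Sigma$ is a submersion with radial level sets. The first step is to make the notion of \emph{tile} precise and check it is well defined. Over each open arc of $S^1$ lying between consecutive singular angles, $\theta|_\Sigma$ is a submersion onto an interval, so the foliation restricted there is a product: each connected component of the preimage is $\ell\times(0,1)$ for a single regular leaf $\ell$, where $\ell$ is a $b$-arc with both endpoints at elliptic points, or a closed curve, since $\lambda$ meets $\Sigma$ only transversally at interior points of leaves and so contributes no leaf endpoints. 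Choosing the middle leaf in each such component and cutting $\Sigma$ along the resulting disjoint union of leaves produces the tiles, and by construction each tile lies over a short arc of $S^1$ containing exactly one singular angle, hence meets exactly one singular leaf and contains exactly one singularity.

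Next I would classify a tile $T$ according to its unique singularity $s$; non-degeneracy forces the Morse index of $\theta|_\Sigma$ at $s$ to be $0$, $1$, or $2$. If $s$ is extremal (index $0$ or $2$), the nearby level sets of $\theta$ are concentric circles collapsing onto $s$, so $T$ is a disk foliated by nested closed leaves with $s$ at its center and the chosen leaf as its boundary; the cases of a minimum and of a maximum of $\theta$ give the two extremal tiles of Figure~\ref{fig:tiles}. If $s$ is hyperbolic (index $1$), the singular leaf $L$ through $s$ is locally a wedge of four arcs crossing at $s$, whose far ends run to elliptic points or back to $s$, and the regular leaves just before and just after $\theta(s)$ are the two resolutions of this crossing; reading off how the arms of $L$ close up — using that $L\setminus\{s\}$ is a union of arcs each limiting onto $s$ at one end and onto an elliptic point or onto $s$ at the other — identifies $T$ with the remaining tiles of Figure~\ref{fig:tiles}, according to whether every arm runs to an elliptic point with $b$-arc neighbors or $L$ instead cobounds a once-punctured subregion so that some neighboring leaves are closed curves. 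I would finish by checking that the cases so produced are mutually exclusive and, by the index trichotomy together with the list of singularities permitted in foliated general position, exhaust every tile, so that the four types pictured are exactly those that arise.

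The main obstacle is the combinatorial bookkeeping in the hyperbolic case, together with two points that must be made airtight: that cutting along the chosen middle leaves genuinely separates distinct singular leaves — this is precisely where the ``one singularity per singular page'' clause of foliated general position is used — and that closed-curve leaves have been correctly incorporated into the enumeration, which is the point of contact with the simple closed curve analysis of Lemma~\ref{lem:scc}. The remaining ingredients — the product structure of $\Sigma$ over the non-singular arcs of $S^1$ and the standard local models at extremal and elliptic points — are routine.
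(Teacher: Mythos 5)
Your overall strategy matches the paper's (cut along a chosen middle leaf in each nonsingular product region, giving tiles with one singularity each, then classify by singularity type), and your first paragraph is a fine expansion of what the paper accomplishes by citing Lemma~2.4 of LaFountain--Menasco. But the classification step has a real gap, and your taxonomy doesn't match the four types the paper actually proves.

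In the paper's enumeration, there is \emph{one} extremal tile (a product of nested SCCs collapsing to a center --- the minimum/maximum distinction is merely a sign, not a separate tile type), and there are \emph{three} hyperbolic types, distinguished by how many of the four arms of the singular leaf end at elliptic points: all four (the $h$-tile), exactly two (with the other two closing up into a single closed curve), or zero (two closed curves). You split the extremal case into two and then collapse the latter two hyperbolic cases into one, so while you also land on ``four,'' they are not the same four and the middle hyperbolic case is not actually argued.

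More importantly, you skip the one nontrivial geometric point in the whole lemma. In the two-elliptic-endpoint case, one must rule out the arrangement in which the two arcs running to elliptic points sit \emph{diagonally} across the saddle. The paper does this by observing that the closed curve formed by the other two arms, living in a page $H_\theta$ which is a half-plane, separates that page into two components only one of which meets the binding --- so the two arcs ending at elliptic points must be adjacent, not opposite, at the saddle. Without that argument you have not shown that only the pictured tiles occur; you have only shown that each tile contains exactly one non-degenerate singularity. I'd fix the count to $1+3$ and supply the half-plane separation argument for the mixed case; once that is in place, the rest of your write-up (the product structure over nonsingular arcs, the local models at elliptic and extremal points, and the one-singularity-per-page clause) is correct and carries the proof.
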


\begin{proof}
    Lemma~2.4 of \cite{lafountain2017braid} allows us to decompose a surface into tiles if it is in the general position described in Lemma~\ref{lem:genposn}.

    An extremal singularity has a neighborhood consisting of a product of SCCs and so can occur only in one way. 

    The neighborhood of a hyperbolic singularity includes four singular leaves. We may classify them based on the number of elliptic points involved. All four singular leaves may terminate at (distinct) elliptic points, forming an \emph{h-tile}. On the other hand, two may terminate in elliptic points and the other two form a closed curve; note that, since the page is a half-plane, the closed curve separates it into two components, only one of which may touch the binding, and so the arcs terminating in elliptic points cannot be diagonally across from one another at the hyperbolic singularity. Finally, the singular leaves may also form two closed curves, which, again, may only take the form shown.
\end{proof}

Now, we begin to simplify our splitting sphere $S$ by eliminating SCCs. 

\begin{defn}
    There are three types of closed curves we may encounter in a surface in foliated general position. If $\alpha$ is a SCC, then it lives in a product neighborhood of simple closed curves bounded at each end by a singular leaf. We name the SCCs based on whether this neighborhood contains two extremal singularities (\emph{type E-E}), two hyperbolic singularities (\emph{type H-H}), or one of each (\emph{type E-H}).
\end{defn}

\begin{figure}\centering
    \begin{subfigure}{0.3\textwidth}\centering
        \includegraphics[scale=0.7]{./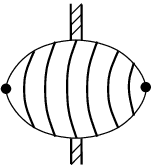}
        \caption{}\label{fig:scc_ee}
    \end{subfigure}\qquad
    \begin{subfigure}{0.6\textwidth}\centering
        \includegraphics[scale=0.7]{./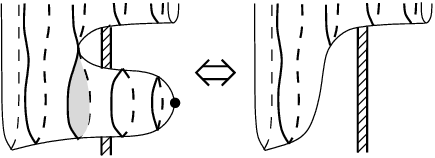}
        \caption{}\label{fig:scc_eh}
    \end{subfigure}
    \par\medskip
    \begin{subfigure}{0.6\textwidth}\centering
        \includegraphics[scale=0.7]{./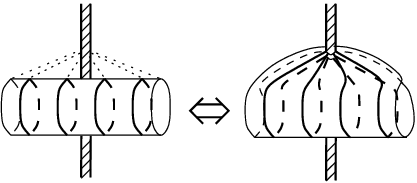}
        \caption{}\label{fig:scc_hh}
    \end{subfigure}\qquad
    \begin{subfigure}{0.3\textwidth}\centering
        \includegraphics[scale=0.7]{./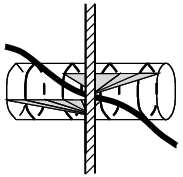}
        \caption{}\label{fig:scc_hh-v2}
    \end{subfigure}
    \caption{Three types of SCCs and their resolutions, \eqsubref{fig:scc_ee} type E-E, \eqsubref{fig:scc_eh} type E-H, \eqsubref{fig:scc_hh}-\eqsubref{fig:scc_hh-v2} type H-H and a variation.}\label{fig:scc}
\end{figure} 

\begin{lem}\label{lem:scc}
    If a closed, connected surface $\Sigma$ is in foliated general position, then its SCCs are types E-E, E-H, or H-H, as pictured in Figure~\ref{fig:scc}. If $\Sigma$ has a SCC of type E-E, then it is a sphere without elliptic points. If $\Sigma$ is not punctured by the booklink $\lambda$, then any SCC of type E-H or H-H may be eliminated by a booklink isotopy (and, for type E-H, does not add any elliptic points). 
\end{lem}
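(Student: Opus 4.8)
The plan is to analyze each possible SCC by examining the product neighborhood of simple closed curves in which it lives, bounded at each end by a singular leaf, and to use the tile decomposition of Lemma~\ref{lem:tiles} to enumerate the possible endpoints of this neighborhood. First I would observe that a singular leaf bounding a product-of-SCCs region must itself be the singular leaf of an extremal or hyperbolic singularity (an elliptic point alone cannot bound such a region on the side away from the axis), which immediately yields the three types E-E, E-H, and H-H; I would draw the reader to Figure~\ref{fig:scc} for the local pictures and confirm, using the half-plane structure of the page established in the proof of Lemma~\ref{lem:tiles}, that no other configurations of the singular leaves occur.

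For the E-E claim, I would argue that an E-E region is a cylinder of SCCs capped at both ends by extremal singularities, hence a sphere; and since the entire surface $\Sigma$ is closed and connected, this sphere \emph{is} all of $\Sigma$, so in particular it has no elliptic points. For the elimination claims, the idea is purely local-to-the-page. For a type E-H SCC, the neighborhood runs from an extremal singularity to a hyperbolic one; I would describe a booklink isotopy supported in a neighborhood of the relevant pages that pushes the extremal singularity through the hyperbolic singularity (a finger move / saddle cancellation in the open-book foliation), thereby cancelling both singular pages and removing the SCC. The key point to check is that this isotopy is through booklinks — i.e., it never drags the surface across the braid axis and never creates or destroys elliptic points — which is exactly why the statement promises ``does not add any elliptic points''; I would verify the local model of Figure~\ref{fig:scc_eh} makes the cancellation visibly disjoint from the axis. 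For type H-H, the region runs between two hyperbolic singularities, and I would exhibit an isotopy (Figure~\ref{fig:scc_hh} and its variant Figure~\ref{fig:scc_hh-v2}, corresponding to the two ways the four singular leaves at each hyperbolic point can be arranged relative to the binding) that trades the pair of hyperbolic singularities and the intervening SCC for a single nonsingular product region, again supported away from the axis.

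The one hypothesis I must use carefully is that $\Sigma$ is not punctured by the booklink $\lambda$: in each elimination move the supporting region of the isotopy is a solid region swept out by the SCCs between the two singular pages, and if $\lambda$ passed through that region the isotopy would not be an ambient booklink isotopy of the pair $(S^3,\lambda)$. So before invoking each move I would note that the relevant swept region is, up to a small perturbation permitted by foliated general position (intersections of $\lambda$ with $\Sigma$ lie off singular leaves and critical points), disjoint from $\lambda$, and hence the isotopy of $\Sigma$ extends to an ambient booklink isotopy fixing $\lambda$.

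The main obstacle I anticipate is not any single computation but the bookkeeping of the local models: one must be sure the enumerated pictures in Figure~\ref{fig:scc} are genuinely exhaustive — in particular that, at a hyperbolic singularity bounding an SCC region, the two singular leaves heading into the region and the two heading out are arranged exactly as shown and not in some diagonal configuration — and that each prescribed isotopy really does reduce the count of elliptic points (for E-H and H-H, leaving it unchanged or decreasing it) rather than merely shuffling singularities around. This is the same style of case-check carried out in~\cite{birman1990studying} and~\cite{lafountain2017braid}, and I would lean on those references for the combinatorial template while supplying the open-book-specific verification that the moves stay clear of the axis and of $\lambda$.
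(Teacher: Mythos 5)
Your classification step and the E-E conclusion match the paper's argument closely. But your treatment of the elimination moves has two genuine gaps.

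First, the H-H resolution cannot be carried out ``supported away from the axis'' without adding elliptic points, as you claim. The only way to dispose of a cylinder of SCCs capped by two hyperbolic singularities (with no extremal singularity available to cancel against either one) is to sweep a piece of the surface \emph{through} the binding using a family of paths from the SCCs to the axis; this \emph{adds} two elliptic points, and indeed the figure pair \ref{fig:scc_hh}--\ref{fig:scc_hh-v2} is not depicting two combinatorial types of hyperbolic arrangement but rather the basic move and a variant subdividing the cylinder when $\lambda$ interrupts the paths. The lemma's parenthetical --- ``and, for type E-H, does not add any elliptic points'' --- is worded precisely because the H-H move \emph{does} add them. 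Your proof plan contradicts the statement it is proving. (This matters downstream: the paper's induction in Theorem~\ref{thm:split} has to re-clean after H-H resolutions and be careful that only the E-H resolution is ``free.'')

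Second, you deduce from the non-puncture hypothesis that the swept region of each isotopy is ``disjoint from $\lambda$.'' That does not follow: $\lambda$ can sit entirely inside the ball bounded by $D_\theta \cup D_\Sigma$ (in the E-H case) or obstruct the family of paths to the axis (in the H-H case) without ever crossing $\Sigma$. The paper handles exactly these situations nontrivially: for E-H, any portion of $\lambda$ inside the ball is pushed across $D_\theta$ while preserving its critical points, so the move is still a booklink isotopy; for H-H, if $\lambda$ blocks the paths, the cylinder of SCCs is cut into sub-cylinders that are resolved separately, each with a family of paths avoiding $\lambda$. Without these two amendments your elimination moves are not justified as booklink isotopies.
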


Observe that, if the SCC is type E-E and so the splitting sphere $S$ is a sphere without elliptic points, then we may apply a booklink isotopy so that Theorem~\ref{thm:split} is satisfied.

\begin{proof}
    We may ``flow'' a SCC $\alpha$ both forwards and backwards through the open book until it encounters a singular leaf in each direction, which gives a product neighborhood or cylinder of SCCs. This flow can never encounter an elliptic singularity, so it must be one of the three types listed.
 
    In the E-E case, where $\alpha$ encounters an extremal singularity in both directions, the surface $\Sigma$ is in fact a sphere disjoint from the braid axis as in Figure~\ref{fig:scc_ee}. We may easily identify a small region of it and push it through the axis, adding 2 elliptic points (and possibly a pair of hyperbolic singularities).
 
 	If we encounter an extremal singularity in one direction and a hyperbolic one in the other, i.e., the E-H case, then we follow Figure~\ref{fig:scc_eh}, which reproduces the method of \cite[Figure~2.7 and Lemma~2.1]{lafountain2017braid}: we may isotope the protruding finger of surface (along with any booklink inside it) back past the page with the hyperbolic critical point. That is, the curve $\alpha$ will flow to a closed curve inside the singular leaf which bounds a disk $D_{\theta}$ in its page $H_{\theta}$ as well as a disk $D_\Sigma$ in $\Sigma$ which contains $\alpha$ and the extremal critical point. We will assume inductively that the interior of $D_{\theta}$ is disjoint from $\Sigma$, and so we may use the solid ball bounded by $D_{\theta}$ and $D_\Sigma$ and construct a booklink isotopy replacing $\Sigma$ by $\Sigma-D_\Sigma \cup D_{\theta}$. If there was any portion of the booklink inside this ball, it may be pushed across $D_{\theta}$ while preserving the critical points, so the isotopy is a booklink isotopy. 
 
    Finally, for the H-H case, we have a cylinder of SCCs which terminates at a hyperbolic singularity in both directions. We may resolve it as in Figure~\ref{fig:scc_hh}, which follows the method of \cite[Figure~2.14 and Lemma~2.3]{lafountain2017braid}): We identify a continuous family of paths, each connecting an SCC in the cylinder through its page to some fixed point on the braid axis; if this family is disjoint from the booklink, we use it to isotope a piece of the surface through the axis, adding two elliptic points and eliminating the whole cylinder of SCCs. Observe that we are not concerned if some other portion of the surface intersects this family of paths, as the isotopy will push it smoothly through the binding as well. If the booklink itself interrupts the paths, we can divide the cylinder of SCCs into multiple sub-cylinders which we resolve separately, as in Figure~\ref{fig:scc_hh-v2}. Since this isotopy is now conducted away from the booklink, it is in fact a booklink isotopy.
\end{proof}

Now, since our splitting sphere $S$ has no SCCs and thus no extremal singularities, it must be tiled by $h$-tiles glued together along nonsingular leaves. We now begin our induction by canceling pairs of elliptic points.

\begin{defn}
    Consider a nonsingular leaf terminating in two elliptic points: this leaf is a curve in some page $H_{\theta}$ which touches the binding in two points and divides the page into two disks. If one of these disks is disjoint from the booklink, then we call it \emph{nonessential}, else we call it \emph{essential}.
\end{defn}

\begin{figure}
    \includegraphics[scale=0.6]{./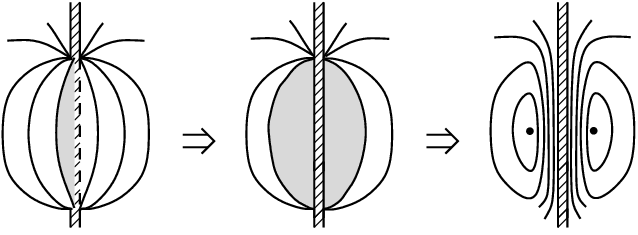}
    \caption{A nonsingular leaf between two elliptic points divides its page into two disks. If one of them avoids the booklink, then the surface may be pushed along that disk to remove the pair of elliptic points.}\label{fig:nonessential}
\end{figure}

\begin{lem}\label{lem:nonessential}
    If the splitting sphere $S$ has a nonessential leaf, we may alter it by booklink isotopy and remove a pair of elliptic points.
\end{lem}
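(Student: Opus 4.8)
The plan is to localize the given nonessential leaf, flow it through the open book to produce an embedded $3$-ball disjoint from $\lambda$ whose boundary contains a protruding finger of $S$, and then isotope $S$ across that ball so as to retract the finger past the braid axis. First I would pass to an \emph{innermost} nonessential leaf. In the page $H_\theta$ containing the given leaf, $S\cap H_\theta$ is a finite union of b-arcs (no simple closed curves survive, by Lemma~\ref{lem:scc}); any b-arc lying inside the nonessential disk has both endpoints on the axis arc and so cuts off a still smaller subdisk, which is again disjoint from $\lambda$ and hence nonessential. Choosing one whose subdisk is innermost, we obtain a leaf $\ell$ with a nonessential disk $D$ satisfying $D\cap S=\ell$, $D\cap\lambda=\emptyset$, and $\partial D=\ell\cup a$, where $a$ is an arc of the braid axis whose endpoints $e_1,e_2$ are therefore consecutive elliptic points of $S$ on the axis. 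Since consecutive elliptic points on the binding carry opposite signs --- between two of them the axis remains in a single component of the complement of the sphere --- the points $e_1$ and $e_2$ have opposite sign.

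Next I would build the ball. The leaf $\ell$ lies in a product region of $S$ foliated by parallel b-arcs $\ell_\theta$, each joining $e_1$ to $e_2$ and cutting off a disk $D_\theta\subset H_\theta$ with $\partial D_\theta=\ell_\theta\cup a$, where $\theta$ ranges over a closed interval $J$ whose endpoints carry the hyperbolic singularities $h^{\pm}$ on the singular leaves that bound the product region (possibly $h^{+}=h^{-}$). Because $\lambda$ is disjoint from $S$ and from the axis, a strand of $\lambda$ could enter $D_\theta$ only by crossing $\ell_\theta\subset S$ or $a$, which is impossible; and a new leaf of $S$ could appear inside the open disk $D_\theta$ only at a singular page, of which there are none over the interior of $J$. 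Hence every $D_\theta$ is disjoint from $\lambda$ and meets $S$ only in $\ell_\theta$ (or, at $\theta\in\partial J$, in a singular leaf through $h^{\pm}$). Consequently $W:=\bigcup_{\theta\in J}\overline{D_\theta}$ is an embedded $3$-ball, disjoint from $\lambda$, whose intersection with $S$ is the closed product region $R\subset S$, and with $\partial W=R\cup D'$ for a disk $D'$ that meets the axis only along $a$.

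Then I would isotope $S$ across $W$: push the disk $R\subset S$ onto $D'$, then perturb it slightly off the axis and off the two singular pages at $\theta\in\partial J$. This isotopy is supported near $W$, hence leaves $\lambda$ fixed --- so the bridge and braid numbers are preserved and the sphere still splits $L$ --- and is a booklink isotopy, built exactly as in the SCC resolutions of Lemma~\ref{lem:scc} and the b-arc reductions of \cite[e.g.\ Figures~2.7 and~2.14]{lafountain2017braid}. Because $e_1$ and $e_2$ have opposite signs, the perturbed copy of $D'$ can be taken off the axis arc $a$ altogether, so the new sphere $S'$ meets the axis in two fewer points; a local check against the tile types of Figure~\ref{fig:tiles} shows that the adjacent hyperbolic singularities $h^{\pm}$ are absorbed as well, and that $S'$ can be returned to foliated general position --- still tiled only by $h$-tiles, by Lemma~\ref{lem:scc} --- with exactly two fewer elliptic points, as claimed.

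The main obstacle is the local bookkeeping in this last step: one must verify that retracting the finger genuinely \emph{removes} the pair $e_1,e_2$ and not merely lowers their valence, and that the hyperbolic count drops in step so that the relation (number of elliptic points) $=$ (number of hyperbolic points) $+\,2$ is maintained for the sphere. The innermost choice and the opposite-sign fact are what force this, but the degenerate configuration $h^{+}=h^{-}$ and the re-perturbation into general position both need care, and one may have to iterate the move --- each pass exposing a new innermost nonessential leaf --- rather than finish in a single stroke. I would carry out this analysis by a tile-by-tile picture modeled on the inessential-b-arc reduction in \cite{lafountain2017braid}.
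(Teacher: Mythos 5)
Your proposal follows the same basic plan as the paper's proof: isotope $S$ across a region disjoint from $\lambda$ and bounded in part by the nonessential disk, retracting the pair of elliptic points, and then restore general position by a subsequent SCC cleanup. The added refinements (passing to an innermost nonessential leaf, noting the opposite signs of consecutive elliptic points) are sound and in keeping with the style of the paper's other lemmas. However, the key disjointness claim --- that $W=\bigcup_{\theta\in J}\overline{D_\theta}$ misses $\lambda$ --- has a genuine gap. You argue that a strand of $\lambda$ cannot enter $D_\theta$ by crossing $\ell_\theta\subset S$ or the axis arc $a$, and that no new leaf of $S$ appears over the interior of $J$; but you do not rule out a strand of $\lambda$ being \emph{born} inside the swept region. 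If $\lambda$ has a critical point at some angle $\theta_1\in\mathrm{int}(J)$ whose location lies in $D_{\theta_1}$, a new pair of points of $\lambda\cap D_\theta$ appears for $\theta$ just past $\theta_1$ without any boundary crossing, and that wave of $\lambda$ is then trapped inside $W$. The hypothesis of the lemma gives disjointness from $\lambda$ only in the single page $H_{\theta_0}$ carrying the chosen leaf, not across the full interval up to the singular leaves, and nothing in foliated general position forbids critical points of $\lambda$ from lying inside the swept ball. The paper's proof avoids committing to this full sweep at once: it pushes along the disk and tolerates the resulting extremal singularities, deferring the remaining simplification to a fresh pass through Lemma~\ref{lem:scc}.

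A secondary imprecision: your closing sentence asserts that $S'$ can be returned to foliated general position ``still tiled only by $h$-tiles, by Lemma~\ref{lem:scc}, with exactly two fewer elliptic points,'' but this requires an argument you do not supply. The push necessarily introduces a pair of extremal singularities and the SCCs that cobound them, and Lemma~\ref{lem:scc} re-introduces two elliptic points when it removes a type H-H SCC. For the net elliptic count to drop by two, one must observe (as the paper does explicitly) that the new SCCs flow into an extremal singularity and hence are type E-E or E-H, and cannot be E-E since elliptic points remain; so they are E-H and their removal does not change the elliptic count. Without pinning down the type, the claim of a net decrease by exactly two is unsupported.
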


\begin{proof}
    Consider a nonessential leaf as in Figure~\ref{fig:nonessential}. It divides its page into two disks, one of which misses the booklink (in the figure, the shaded inner disk), and so we may isotope $S$ along that disk. This removes a pair of elliptic points, although it reintroduces a pair of extremal singularities and associated SCCs. These SCCs cannot be of type E-E as there are too many elliptic points, so they will be of type E-H, and we may then resolve them by a new application of the E-H method of Lemma~\ref{lem:scc} without reintroducing any elliptic points. Once more, the entire process occurs away from the booklink.
\end{proof}

If the application of Lemma~\ref{lem:nonessential} leaves us with more than two elliptic points, then we will identify a possible exchange move on the booklink which will convert an essential leaf to nonessential and allow us to cancel another pair of elliptic points.

\begin{figure}\centering
    \begin{subfigure}{\textwidth}\centering
        \includegraphics[scale=0.35]{./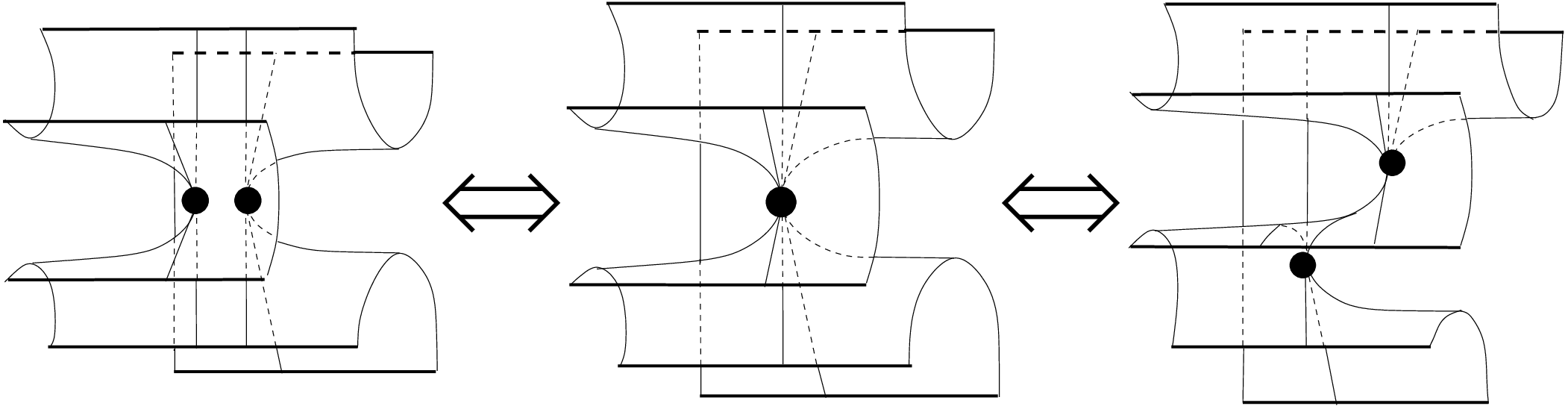}\caption{}\label{fig:monkeysaddlesurface}
    \end{subfigure}
    \par\medskip
    \begin{subfigure}{\textwidth}\centering
        \includegraphics[scale=0.2]{./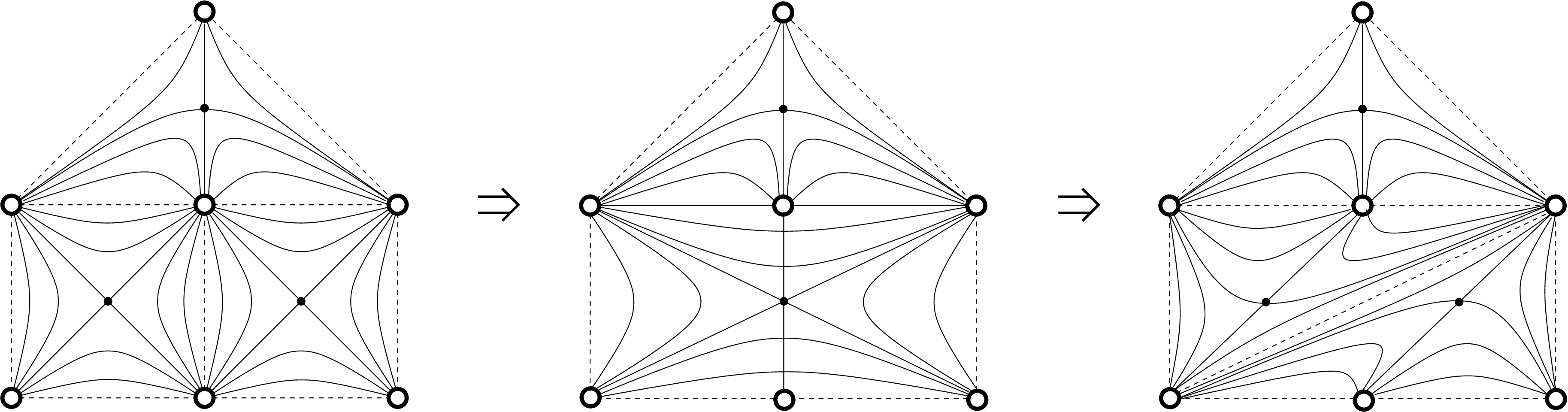}\caption{}\label{fig:monkeysaddlemerge}
    \end{subfigure}
    \caption{\eqsubref{fig:monkeysaddlesurface} Two adjacent saddle points of the same orientation may merge and separate in a local isotopy. \eqsubref{fig:monkeysaddlemerge} A possible use of this isotopy to alter the tiling around a degree-3 elliptic point.}\label{fig:monkeysaddle}
\end{figure}

\begin{lem}\label{lem:exchange}
    Let $S$ be in foliated general position without SCCs and where all nonsingular leaves are essential. If there are more than 2 elliptic points, then we may perform an exchange move which converts a leaf from essential to nonessential leaf without altering the surface.
\end{lem}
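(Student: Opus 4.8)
The plan is to follow Birman and Menasco's strategy for the Split Braid Theorem: run an Euler-characteristic count on the foliation of $S$ to locate an elliptic point of small valence, and then read off the desired exchange move from the local picture there. Since $S$ has no SCCs, it has no extremal singularities, so by Lemma~\ref{lem:tiles} its foliation is a union of $h$-tiles, each combinatorially a square with four distinct elliptic corners, four nonsingular-leaf edges, and a single hyperbolic singularity whose four separatrices run to the four corners. Let $V$ be the number of elliptic points, $E$ the number of edges, and $F$ the number of tiles (equivalently, hyperbolic singularities). Every edge borders two tiles, so $2E = 4F$, and since $S$ is a sphere, $V - E + F = 2$; hence
\[
F = V - 2, \qquad \sum_{e}\operatorname{val}(e) \;=\; 4F \;=\; 4(V-2).
\]
Because a single $h$-tile requires four distinct elliptic corners, $V=3$ is impossible, so $V \ge 3$ forces $V \ge 4$; then the average valence $4 - 8/V$ is strictly less than $4$, and there is an elliptic point $e$ with $\operatorname{val}(e)\in\{1,2,3\}$.

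The heart of the argument is to reduce, in each case, to the valence-$2$ configuration in which an exchange move is visible. If $\operatorname{val}(e)=3$, the three $h$-tiles meeting at $e$ are pairwise adjacent along edges incident to $e$ and carry three hyperbolic singularities, so two of them share a sign; sliding that same-sign pair together through their common product neighborhood, merging them, and re-splitting the resulting monkey saddle the other way, as in Figure~\ref{fig:monkeysaddle}, is an isotopy of $S$ supported in a thin neighborhood of $S$ (hence disjoint from $\lambda$, hence a booklink isotopy) which alters the tiling so that $e$ now has valence $2$. If $\operatorname{val}(e)=1$, a single $h$-tile is folded onto itself along a leaf incident to $e$, the degenerate endpoint of this same picture. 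So in every case we may assume $e$ has valence $2$, surrounded by two tiles with hyperbolic singularities $h_1,h_2$ and with two nonsingular leaves incident to $e$.

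Finally, I would choose a nonsingular leaf $\ell$ incident to $e$ (in a page just past one of $e$'s separatrices): it runs from $e$ to another elliptic point and divides its page into two disks, of which I take $D$ to be the one cut off on the side of the (at most two) tiles around $e$. Since $\ell$ is essential by hypothesis, $\lambda\cap D\neq\varnothing$; but any part of $\lambda$ meeting $D$ must enter it through a small neighborhood of $e$ on the binding side, in a pattern controlled by $h_1$ and $h_2$, and that pattern — after grouping the parallel strands of $\lambda$ involved into a single, possibly locally knotted, tube as Figure~\ref{fig:exchange} permits — is exactly the input of an exchange move. Performing that exchange move slides those strands off $D$, so $\ell$ becomes nonessential; since the move is supported in a neighborhood of $\lambda$ disjoint from $S$, it does not move $S$, and, being an exchange move, it preserves both the bridge and braid index. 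I expect the main obstacle to be precisely this last matching, together with the sign bookkeeping in the valence-$3$ reduction: one must check that $h_1$ and $h_2$ genuinely force $\lambda$ into the swappable position of Figure~\ref{fig:exchange}, and that a single exchange move — rather than, say, a destabilization — clears $D$. This is the booklink analogue of the combinatorial case-checking carried out by Birman and Menasco in~\cite{birman1990studying} and recapitulated in~\cite{lafountain2017braid}.
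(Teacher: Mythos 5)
Your argument follows the paper's proof essentially line for line: an Euler-characteristic count on the tiling multigraph locates a low-degree elliptic point, the monkey-saddle merge-and-resplit reduces degree~3 to degree~2, and a degree-2 vertex gives the tile configuration of Figure~\ref{fig:exchange_tiles} where an exchange move on $\lambda$ renders the adjacent leaves nonessential without moving $S$. One small correction: you treat valence~1 as a degenerate case to be argued around, but it in fact cannot occur --- once SCCs are gone every tile is an $h$-tile with four \emph{distinct} elliptic corners, so each elliptic point meets at least two distinct tile-edges and hence has valence $\geq 2$; that is exactly what lets one drop the $V_0,V_1$ terms and read off $2V_2+V_3\geq 8$, which pins the low-degree vertex to degree $2$ or $3$ rather than merely $\leq 3$.
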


\begin{proof}
    Consider the multigraph with a vertex for each elliptic point and an edge for each product family of nonsingular leaves (this is well-defined because there are more than 2 elliptic points). We repeat the Euler characteristic argument of \cite[Lemma~2]{birman1990studying}: the multigraph gives a decomposition of the sphere, so $F-E+V = 2$; additionally, since there are more than 3 elliptic points, the graph cuts the surface into 4-gons, which means $E=2F$. Therefore, $4V-2E=8$. Further, if $V_i$ is the number of vertices of degree $i$, then $V = V_2 + V_3 + V_4 + \cdots$, but also $2E = 2V_2 + 3V_3 + 4V_4 + \cdots$, and so $8 = 2V_2 + V_3 -V_5 - 2V_6 - \cdots$, which means $2V_2 + V_3 \geq 8$. In other words, this graph contains at least one vertex of degree 2 or 3, i.e., an elliptic point where 2 or 3 tiles meet. 
    
     Say there is a vertex of degree 3. There is a move to merge and separate two adjacent hyperbolic critical points of the same orientation with respect to the foliation; this was established in \cite[Lemma~3]{birman1990studying} and used as a basis in several other foliation proofs, e.g., \cite[Figure~6]{arandaXXXbooklink}. See Figure~\ref{fig:monkeysaddle} for the move and its effect on the tiling at a trivalent vertex. Note that at least two of its adjacent $h$-tiles have singularities of the same sign, and so we may apply this move to reduce the number of tiles at this elliptic point from 3 to 2.
     
     Finally, a vertex of degree 2 and its surrounding tiles correspond to a section of surface as in Figure~\ref{fig:exchange_tiles}. The hyperbolic critical points necessarily have opposite orientation (one can see this by tracing the orientation of the leaves; see, e.g., \cite[Figure~3.2]{lafountain2017braid}). These two tiles corresponds to a finger of surface looping up to pass through the binding with pieces of the booklink passing around the binding on both sides of $S$ (since all nonsingular leaves are essential). We may perform an exchange move as shown in Figure~\ref{fig:exchange}; as a result, the nonsingular leaves at the top of the picture become nonessential.
\end{proof} 

We now have sufficient tools to prove the theorem.

\begin{proof}[Proof of Theorem~\ref{thm:split}]
Let $S$ be a splitting sphere for the booklink $\lambda$. We first put it into general position by Lemma~\ref{lem:genposn}, ensuring a sufficiently well-behaved foliation which we may decompose into tiles by Lemma~\ref{lem:tiles}. We then remove SCCs by Lemma~\ref{lem:scc}. Finally, we induct on the number of elliptic points: if there are nonessential curves, we apply Lemma~\ref{lem:nonessential} to cancel a pair of elliptic points associated to one; if not, we apply Lemma~\ref{lem:exchange} to alter the surface via an exchange move and convert an essential curve to a nonessential one. Note that Lemma~\ref{lem:nonessential} created additional SCCs, but only of the type E-H, which can be eliminated via Lemma~\ref{lem:scc} without introducing new elliptic points.
\end{proof}

The primary difference between our treatment of split booklinks and Birman and Menasco's treatment of split braids lies in the resolution of the SCCs. Their method resolved E-H and H-H types more efficiently by using the fact that a braid could not cross the disk $D_{\theta}$, so $S$ could be directly surgered along it and replaced by the resulting essential sphere (note at least one of the resulting spheres must be essential). This method had the advantage of not introducing new elliptic points; however, it cannot address booklink E-H and H-H curves, and it cannot address situation E-E at all, which does not occur for braids. 

Observe also that our proof does not immediately extend to booklinks in arbitrary 3-manifolds. In particular, it relies upon the fact that any nonsingular leaf between two elliptic points cuts a page into two disks. That said, we could extend the definition of a \emph{split booklink} to an arbitrary open book 3-manifold by requiring the splitting sphere to intersect each component of the binding in two points, and then conjecture:

\begin{conj}
    If $\lambda$ is a booklink in an open book 3-manifold which represents a split link, there is a finite sequence of booklink isotopies and exchange moves which converts $\lambda$ into a split booklink.
\end{conj}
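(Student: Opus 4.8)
The plan is to re-run the proof of Theorem~\ref{thm:split} in the more general setting, adapting the open book foliation machinery of Ito and Kawamuro \cite{ito2014open} and isolating the places where the half-plane structure of a page of the open book of $S^3$ was used. Since $S$ is a splitting sphere, $\lambda\cap S=\emptyset$, so the clauses of foliated general position that mention $\lambda$ are vacuous and only the position of $S$ relative to the binding and pages matters. Lemma~\ref{lem:genposn} and Lemma~\ref{lem:tiles} are transversality and decomposition statements and hold verbatim in any open book manifold. Lemma~\ref{lem:scc} needs more care: the E-E resolution requires only a nonempty binding, and the E-H and H-H resolutions of \cite[Lemmas~2.1 and~2.3]{lafountain2017braid} apply verbatim to SCCs that bound a disk in their page, but SCCs essential in the page must be handled separately --- this is part of the same obstruction discussed below. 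Granting that, $S$ may be arranged to have no SCCs, hence no extremal singularities, hence to be tiled entirely by $h$-tiles.

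The Euler-characteristic step of Lemma~\ref{lem:exchange} is insensitive to the ambient manifold: the elliptic points and the product families of nonsingular leaves give a cell decomposition of the $2$-sphere $S$ whose faces are quadrilateral $h$-tiles, so $F-E+V=2$ and $E=2F$ force $2V_2+V_3\geq 8$, and there is always an elliptic point at which only two or three tiles meet. The merging and separating of two same-sign hyperbolic singularities (Figure~\ref{fig:monkeysaddle}) is a local move, so a degree-$3$ vertex is handled exactly as before. What must be rebuilt is therefore (a) the notion of a \emph{nonessential leaf} together with the cancellation of the pair of elliptic points attached to it, and (b) the treatment of a degree-$2$ vertex.

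For (a), declare a nonsingular leaf $\gamma$ joining two elliptic points to be \emph{nonessential} if it is inessential in its page $P$ --- that is, $\gamma$ together with an arc of the binding bounds a disk $D\subset P$ --- and $D$ can be chosen disjoint from $\lambda$; otherwise call $\gamma$ \emph{essential}. With this definition the proof of Lemma~\ref{lem:nonessential} goes through: isotope $S$ across $D$ to delete a pair of elliptic points and then clear the resulting type-E-H SCCs by Lemma~\ref{lem:scc}. So the induction on the number of elliptic points continues as long as some nonsingular leaf is nonessential.

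The hard part, and the reason the conjecture does not simply fall out, is case (b) when every nonsingular leaf is essential. In $S^3$ a degree-$2$ vertex is a bigon of two opposite-sign $h$-tiles describing a finger of $S$ that loops through the braid axis with $\lambda$ passing around it on both sides, and an exchange move renders the top leaf nonessential; this picture rests on the leaf cutting its page into two disks. In a general open book the leaf at a degree-$2$ vertex may be non-separating in $P$, or may separate $P$ into two subsurfaces of positive genus or carrying extra binding components, and neither possibility is an exchangeable finger. I see two routes. The first is to arrange, before the induction, that $S$ meets every page in separating arcs: since a splitting sphere separates $M$, the family of leaves in any page separates that page, and one can try an innermost/outermost argument on the intersections of $S$ with a fixed system of cut-arcs of the pages to trade non-separating leaves for separating ones at the cost of controlled exchange moves. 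The second, and the one I expect is actually needed, is to enlarge the admissible move set by a ``higher-genus exchange'' absorbing a positive-genus or multiply-bound subsurface of a page across the binding, and then to prove such a move preserves both bridge and braid index (the analogue of checking that an exchange move is admissible). This verification is the real content, and is exactly where I expect the genuine difficulty, since an essential $b$-arc in a general open book can be impossible to remove without a stabilization that alters the open book; there is no nontrivial special case short of $S^3$ in which this difficulty evaporates, so new ideas of this kind are unavoidable.
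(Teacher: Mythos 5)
This statement appears in the paper only as a \emph{conjecture}, not a theorem: the paper offers no proof of it, and in fact the sentence immediately preceding the conjecture reads ``Observe also that our proof does not immediately extend to booklinks in arbitrary 3-manifolds. In particular, it relies upon the fact that any nonsingular leaf between two elliptic points cuts a page into two disks.'' Your analysis reaches precisely this obstruction: you correctly observe that the transversality (Lemma~\ref{lem:genposn}), tiling (Lemma~\ref{lem:tiles}), and Euler-characteristic (Lemma~\ref{lem:exchange}) steps are insensitive to the ambient open book, and that the place where the argument breaks is the degree-2 vertex, where the nonsingular leaf at the top of the bigon need not cut its page into two disks in a general open book --- so there is no exchange-move finger to push through the binding. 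This is exactly the reason the authors left the statement at the level of a conjecture, and your closing assessment (that either a separating-leaf normalization or a new ``higher-genus exchange''-type move would be needed, and that verifying invariance of the indices under such a move is the real content) is an honest and accurate identification of what a genuine proof would have to supply. In short: you did not prove the statement, but neither does the paper, and your diagnosis of the obstruction coincides with the authors' own remark.

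One small refinement worth noting: since $S$ is a splitting sphere, $\lambda \cap S = \emptyset$ as you say, so the puncture bookkeeping of Section~\ref{sec:composite} is irrelevant here; the only new phenomena are topological, coming from the page surfaces. Your case (a), re-defining nonessential leaves via disks bounded with an arc of the binding, is the natural generalization and should indeed let the cancellation step of Lemma~\ref{lem:nonessential} run. The remaining gap is entirely in case (b), and your two proposed routes (normalizing to separating leaves by an innermost-arc argument, or enlarging the move set) are both reasonable candidate directions; neither is carried out, which is appropriate for a conjecture.
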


\begin{figure}
\begin{subfigure}{0.3\textwidth}\centering
\includegraphics[scale=0.25]{./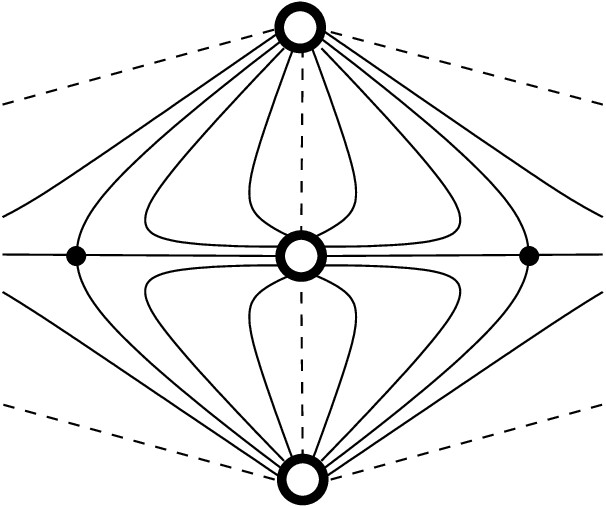}\caption{}
\end{subfigure}
\begin{subfigure}{0.3\textwidth}\centering
\includegraphics[scale=0.7]{./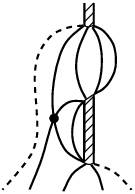}\caption{}
\end{subfigure}
\caption{The tiles involved in an exchange move.}\label{fig:exchange_tiles} 
\end{figure}

As an immediate consequence of the theorem, we obtain information about the braid and bridge indices of a split link. The bridge index is additive by definition, and Birman and Menasco provided the first proof that the braid index was also additive~\cite{birman1990studying}. Using Theorem~\ref{thm:split}, we may extend these identities to the booklink spectrum in Corollary~\ref{cor:splitspectrum}, which gives us a recipe for calculating the spectrum:
\[\bb{d}{L} = \min_{d_1+d_2=d} \big[\bb{d_1}{L_1} + \bb{d_2}{L_2}\big].\]
This formula reduces to the braid index relation at $d=0$ and contains the bridge relation because the first $d$ for which $\bbb{d} = 0$ is $\bridge(L_1) + \bridge(L_2)$.

\begin{proof}[Proof of Corollary~\ref{cor:splitspectrum}]
    The bridge index is additive by definition: if $\lambda=\lambda_1 \cup \lambda_2$ is a booklink which is a representative of the split link, then $\bridge(\lambda) = \bridge(\lambda_1) + \bridge(\lambda_2)$.

    The braid index is superadditive by definition: $\braid(\lambda) \geq \braid(\lambda_1) + \braid(\lambda_2)$. If the minimal intersection number of $\lambda_1$ may be realized on the same page as $\lambda_2$, then the braid index is additive. In fact, if $\lambda$ is a split booklink, then each $\lambda_i$ lives inside a ball pierced once by the binding, and we may freely rotate the ball around the binding to line up the pages of minimal intersection number. 
    
    Theorem~\ref{thm:split} guarantees that, if there is any booklink representative $\lambda$ of $L$ whose split components have bridge numbers $d_1$ and $d_2$, then it can be transformed into a split booklink representative $\lambda'$ without changing the bridge numbers or increasing the braid number; in fact, since it is split, the braid number is now additive.
\end{proof}

\section{Composite Links}\label{sec:composite}

Birman and Menasco demonstrated the corresponding decomposition property for braids which are connected sums in the Composite Braid Theorem \cite{birman1990studying}, with the proof corrected for an omission in \cite{birman1990studyingerratum}. 

To extend this result to booklinks, we follow the same pattern as for split links, although the removal of SCCs is slightly more complicated, and we must take care with canceling elliptic points and performing exchange moves to ensure they do not interfere with the locations where the booklink punctures the splitting sphere. 

To begin, we identify a splitting sphere $S$ for the composite booklink $\lambda$. Both Lemma~\ref{lem:genposn} on general position and Lemma~\ref{lem:tiles} on the decomposition into tiles still apply, although there will be two points where $\lambda$ punctures the tiles. We must consider the locations of these punctures and their possible impact on any other moves. The puncture may slide freely across the surface away from any singular leaves; if a puncture approaches a singular leaf, it may be able to slide it across, although it may be obstructed, as shown in Figure~\ref{fig:nudgeslidepush}.

\begin{figure}\centering
    \begin{subfigure}{0.35\textwidth}\centering
        \includegraphics{./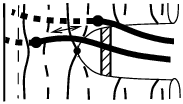} \caption{}\label{fig:nudgeslidepush_good}
    \end{subfigure}
    \begin{subfigure}{0.35\textwidth}\centering
        \includegraphics{./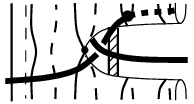} \caption{}\label{fig:nudgeslidepush_bad}
    \end{subfigure}
    \caption{\eqsubref{fig:nudgeslidepush_good} A puncture which can be isotoped across a singular leaf; \eqsubref{fig:nudgeslidepush_bad} one which cannot.}\label{fig:nudgeslidepush}
\end{figure}

\begin{lem}\label{lem:nudgeslidepush}
	Say $\lambda$ punctures a tile. We may redefine the tiling without altering the surface or booklink to move the puncture off the tile. If the puncture occurs in the product of nonsingular leaves shared by two adjacent tiles with hyperbolic singularities of opposite orientation, then there is a booklink isotopy moving the puncture across any or all of the singular leaves of at least one of the tiles. 
\end{lem}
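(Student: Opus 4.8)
The plan is to prove the two assertions in turn; the first is a bookkeeping statement about the choice of tiling, while the second is where the geometry enters.

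For the first assertion: by foliated general position (Lemma~\ref{lem:genposn}) the puncture is a point $p$ of $S$ lying on no singular leaf, hence on a unique nonsingular leaf $\ell$, and $\ell$ sits in the interior of one of the product neighborhoods of nonsingular leaves that, together with the singularities and singular leaves, make up $S$. This product neighborhood is flanked at each end by a singular leaf, so it is the region common to two adjacent tiles, and the tiling is pinned down only by the choice of a central leaf in each product neighborhood (Lemma~\ref{lem:tiles}). I would therefore simply re-choose $\ell$ as the central leaf of this neighborhood. Nothing moves, and $p$ now lies on a tiling leaf --- inside the product of nonsingular leaves shared by the two flanking tiles, hence off the interior of every tile.

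For the second assertion, let $P$ be the product region now containing $p$, with leaves running between elliptic points $e_+$ and $e_-$ and flanked by $h$-tiles $T_\alpha$ and $T_\beta$ whose hyperbolic points $h_\alpha$ and $h_\beta$ lie in pages $H_\alpha$ and $H_\beta$, say with $\alpha < \beta$. The geometric input I would use is the one already exploited for a degree-$2$ vertex in the proof of Lemma~\ref{lem:exchange}: a pair of $h$-tiles glued along a common product region whose hyperbolic points have opposite orientation assembles into a finger of surface that arches up and over the binding, touching the binding only at the elliptic points along its base. Thus $T_\alpha \cup P \cup T_\beta$ lies on such a finger $\Phi$, an embedded disk whose interior misses the binding and, with generic choices (routing around the other puncture of $\lambda$), also misses $\lambda$ away from $p$. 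I would then slide $p$ down the $T_\alpha$ side of $\Phi$: push the arc of $\lambda$ through $p$ --- which is monotone in $\theta$, since $p$ avoids the critical points of $\lambda$ by Lemma~\ref{lem:genposn} --- along a path $\gamma \subset \Phi$ that leaves $P$, crosses one or more of the separatrices of $h_\alpha$ close to $h_\alpha$, and emerges into another product region adjacent to $T_\alpha$. Carried out inside a regular neighborhood of $\gamma$ that is kept clear of the base of $\Phi$, this push stays away from the binding, from the rest of $S$, and from the rest of $\lambda$, and keeps the arc monotone in $\theta$, so it is a booklink isotopy. Since $h_\alpha$ and $h_\beta$ have opposite orientation, at least one of the two flanking tiles presents this favorable finger (Figure~\ref{fig:nudgeslidepush_good}) rather than the obstructed configuration (Figure~\ref{fig:nudgeslidepush_bad}); working with whichever of $T_\alpha$ and $T_\beta$ it is, we may carry $p$ across any or all of its singular leaves.

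The step I expect to be the main obstacle is making this geometric input precise: reading off from the local models --- the tile pictures of Figure~\ref{fig:tiles} and the leaf-orientation bookkeeping behind the degree-$2$ case of Lemma~\ref{lem:exchange} --- exactly why opposite orientation of $h_\alpha$ and $h_\beta$ is what allows the finger $\Phi$ to clear the binding, and then verifying that dragging the booklink strand at $p$ along $\Phi$ never forces it across the binding or into the other puncture and never creates new $\theta$-critical points. This last issue, which has no counterpart in the braid setting of Birman and Menasco, is exactly the reason for the care with punctures here and throughout the composite case.
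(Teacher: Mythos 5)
Your handling of the first assertion matches the paper: the tiling is only determined up to the choice of a dividing leaf inside each product region, so redefining that choice moves the puncture off a given tile without touching $S$ or $\lambda$ (the paper phrases it as moving the puncture onto the \emph{other} tile rather than onto the dividing leaf, but either reading works).

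For the second assertion you have the right global picture --- the finger of surface arching over the binding, as in the degree-2 case behind Lemma~\ref{lem:exchange} --- and the right mechanism (drag the strand through $p$ along the finger, past the separatrices). But you explicitly flag as ``the main obstacle'' the question of \emph{which} of the two flanking tiles is unobstructed and why opposite orientation guarantees at least one is, and that is exactly the part of the argument you have not supplied. The paper resolves it with a purely local criterion, not a global analysis of the finger: orient $\lambda$ to intersect $S$ positively, and compare the normal vector of $S$ at the hyperbolic singularity of the tile to the tangent vector of $\lambda$ at the puncture. If the angle is acute, the puncture slides across that tile's singular leaves unobstructed (Figure~\ref{fig:nudgeslidepush_good}); if it is obtuse, another strand of $\lambda$ may block the slide (Figure~\ref{fig:nudgeslidepush_bad}). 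Since ``opposite orientation'' of the two hyperbolic singularities means their normal vectors point to opposite sides, the angle is acute for at least one of them, so you slide $p$ across the product region to that tile and then across its singular leaves. This dichotomy is the content of the lemma's hypothesis; without it your claim that ``at least one of the two flanking tiles presents this favorable finger'' is an assertion rather than a deduction. Supplying that local normal-vector comparison would close the gap and bring your proof in line with the paper's.
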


\begin{proof}
	The puncture lives in a product neighborhood of nonsingular leaves, and we define a tiling by selecting a single leaf from this product to divide the two tiles. We may move the puncture between tiles by selecting a different leaf.

	If we wish to move a puncture across a singular leaf, we may do so as in Figure~\ref{fig:nudgeslidepush_good} if the direction of the booklink is compatible with the orientation of the tile. That is, orient the booklink to intersect the surface positively, and then compare the normal vector at the hyperbolic singularity with the tangent vector of the booklink; if they form an acute angle, then the puncture may be isotoped across the tile's singular leaf without concern. On the other hand, if the vectors form an obtuse angle, it is possible that another portion of the booklink obstructs movement across the singular leaf, as in Figure~\ref{fig:nudgeslidepush_bad}. If the hyperbolic singularity in the adjacent tile has the opposite orientation, however, then we may first slide the puncture to the adjacent tile and then move it over that tile's singular leaves. 
\end{proof}

The elimination of SCCs does not carry over directly, and we require a slight variation on Lemma~\ref{lem:scc}: 

\begin{lem}\label{lem:scc_v2}
    If a closed, connected surface $\Sigma$ is in foliated general position, then its SCCs are types E-E, E-H, or H-H. If $\Sigma$ has a SCC of type E-E, then it is a sphere without elliptic points. Any SCC of type E-H or H-H may be eliminated by a booklink isotopy on $\Sigma$ away from the booklink punctures (and, if the type E-H tile avoids punctures, this will not add any elliptic points).
\end{lem}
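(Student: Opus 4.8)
The plan is to follow the proof of Lemma~\ref{lem:scc} line by line, carrying along the two points $p_1,p_2$ where $\lambda$ punctures $\Sigma$ and checking that every isotopy used there can be arranged to fix small disks around $p_1$ and $p_2$. The trichotomy ``type E-E / E-H / H-H'' and the assertion that a type E-E curve forces $\Sigma$ to be a sphere disjoint from the braid axis are proved exactly as before: one flows the SCC forward and backward through the open book to a singular leaf in each direction, an argument that never refers to $\lambda$. So only the elimination of E-H and H-H curves needs a new look.

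For an E-H curve, let $D_\Sigma$ be the finger of $\Sigma$ (the disk containing the extremal singularity and the cylinder of SCCs flowing down to the hyperbolic leaf) and $D_\theta$ the disk it cuts off in the hyperbolic singular page, so that $D_\Sigma\cup D_\theta$ bounds a ball $B$. If $D_\Sigma$ contains neither puncture, I would run the argument of Lemma~\ref{lem:scc} unchanged --- replace $\Sigma$ by $\Sigma-D_\Sigma\cup D_\theta$, pushing any arc of $\lambda$ lying in $B$ across $D_\theta$ --- an isotopy supported near the finger, hence away from $p_1,p_2$, and creating no elliptic points. If instead $D_\Sigma$ contains a puncture, pushing the finger back would drag that puncture; here I would first try to slide the puncture off $D_\Sigma$ across the finger's hyperbolic leaf using Lemma~\ref{lem:nudgeslidepush} (possible when the orientations are compatible), reducing to the previous case, and otherwise fall back to the H-H technique below applied to the cylinder of SCCs, subdividing it around $\lambda$ and pushing the pieces through the axis. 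This last option does eliminate the curve but pays for it with new elliptic points, which is precisely the restriction flagged in the statement.

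For an H-H curve I would reproduce the argument of Lemma~\ref{lem:scc} (after~\cite[Lemma~2.3]{lafountain2017braid}): join the cylinder of SCCs to a fixed point of the braid axis by a continuous family of arcs through the relevant pages, and wherever $\lambda$ meets this family --- in particular at each of $p_1,p_2$, and anywhere else $\lambda$ crosses it --- cut the cylinder into a subcylinder. Since $\lambda$ is compact and hits each page finitely often, only finitely many cuts occur; each resulting subcylinder is then pushed through the axis, away from $\lambda$, which is a booklink isotopy fixing $p_1$ and $p_2$ and adding two elliptic points. Portions of $\Sigma$ that happen to cross the arc families are simply carried through the binding, exactly as in the split case.

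The main obstacle is the E-H case with a puncture in the finger: one must confirm that the H-H-style substitute genuinely destroys the SCC, that the subdivision terminates, and that nothing forces the isotopy to move $p_1$ or $p_2$ --- and then accept, honestly, that this branch can create elliptic points, which is why the parenthetical ``no new elliptic points'' claim is limited to E-H tiles disjoint from the punctures. Everything else is bookkeeping: the isotopies in Lemma~\ref{lem:scc} are already local enough to avoid two marked points, and the number of subcylinders produced in the H-H (and fallback E-H) resolutions is finite.
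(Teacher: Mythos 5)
Your overall plan matches the paper's: rerun the proof of Lemma~\ref{lem:scc} while keeping track of the two punctures, and observe that the trichotomy, the E-E case, and the H-H case go through verbatim once one is careful to push through the binding away from the punctures. The only genuinely new work is the punctured E-H case, and here your write-up diverges from the paper in two small ways. First, your initial attempt to slide the puncture off $D_\Sigma$ via Lemma~\ref{lem:nudgeslidepush} is not in the paper's argument; it is a reasonable alternative when the orientations cooperate, but the paper does not need it. Second, and this is the substantive imprecision, your fallback says to apply ``the H-H technique \dots\ to the cylinder of SCCs, subdividing it around $\lambda$ and pushing the pieces through the axis,'' but the piece of the E-H product neighborhood containing the extremal singularity is a cap, not a cylinder, and cannot be pushed through the axis by the H-H construction. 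The paper instead divides the product neighborhood into exactly two pieces --- a small puncture-free E-H neighborhood of the extremal singularity, and a punctured cylinder bounded on one side by the hyperbolic singular leaf --- and it is only this second piece that gets the H-H treatment (pushed through the binding, adding two elliptic points). After that push, the small neighborhood of the extremal singularity is a clean E-H curve with no punctures and is resolved as in Lemma~\ref{lem:scc} without introducing further elliptic points. Your proof should say explicitly that the subdivision produces a puncture-free E-H cap which is resolved last, rather than implying all pieces are pushed through the axis; once that sentence is added, the argument agrees with the paper's.
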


\begin{proof}
    Once more, we have the three cases of Figure~\ref{fig:scc}, possibly altered by the presence of punctures. The E-E and H-H cases are identical, provided we are careful to act away from the punctures when we push the surface back through the binding, adding a pair of elliptic points. In the E-H case, if the tile avoids punctures, then the move again carries through as before. If there is a puncture, though, then we may not be able to directly collapse this region of the surface to cancel out the extremal and hyperbolic critical points; however, we may divide the product neighborhood of SCCs into two pieces, a small neighborhood of the extremal singularity itself, and a cylinder containing the puncture(s) and the hyperbolic singularity. We first resolve the punctured cylinder by treating it the same way we did the H-H case and pushing it back through the binding away from the punctures, and then we may resolve the E-H case safely.
\end{proof}

Likewise, we build a composite of Lemma~\ref{lem:nonessential} and Lemma~\ref{lem:exchange} to account for the presence of punctures.

\begin{lem}\label{lem:nonessential_exchange_v2}
    Let $S$ be in foliated general position without SCCs. If there are more than 2 elliptic points, then either we may locate and cancel a pair of elliptic points connected by a family of nonessential curves away from the punctures, or else we may perform an exchange move away from the punctures followed by a cancellation of two elliptic points.
\end{lem}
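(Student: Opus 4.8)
The plan is to mirror the proof of Lemma~\ref{lem:exchange}, running the same Euler-characteristic count, but to interleave it with applications of Lemma~\ref{lem:nudgeslidepush} that keep the two punctures of $\lambda\cap S$ clear of every move. Since $S$ has no SCCs it carries no extremal singularities and is tiled entirely by $h$-tiles, so we form the multigraph $G$ on $S$ with a vertex for each elliptic point and an edge for each product family of nonsingular leaves. Exactly as in Lemma~\ref{lem:exchange} (the case of exactly three elliptic points being treated as in that lemma), the combinatorial identities $F-E+V=2$, ``every face is a $4$-gon'', and $E=2F$ give $2V_2+V_3\ge 8$; so $G$ has a vertex of degree $2$ or $3$, and in fact several of them, whereas only two punctures are present. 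Note that this count uses only the tiling structure, not essentiality.

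I would first dispose of the nonessential case. Suppose some product family of nonsingular leaves between elliptic points $e_1,e_2$ is nonessential, bounding a disk $D_\theta$ disjoint from $\lambda$ in its page and a disk $D_\Sigma\subset S$ on the nonessential side, a union of $h$-tiles. Using Lemma~\ref{lem:nudgeslidepush} I would slide both punctures off the tiles of $D_\Sigma$ --- freely across nonsingular leaves, and across singular leaves when the orientation of $\lambda$ permits, relaying first through an opposite-orientation neighbour when it does not. With $D_\Sigma$ puncture-free, the cancellation of Lemma~\ref{lem:nonessential} applies verbatim, and the type E--H SCCs it creates are removed by Lemma~\ref{lem:scc_v2} without adding elliptic points, since those tiles avoid the punctures. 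This gives the first alternative of the lemma.

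Otherwise no nonessential family can be cleared, so we are effectively in the all-essential setting of Lemma~\ref{lem:exchange} and $G$ has a degree-$2$ or degree-$3$ vertex $v$. If $v$ has degree $2$, its two $h$-tiles have hyperbolic singularities of opposite orientation and form a finger of $S$ through the binding with strands of $\lambda$ on both sides; the opposite-orientation clause of Lemma~\ref{lem:nudgeslidepush} pushes the punctures off this finger, after which I would perform the exchange move of Figure~\ref{fig:exchange} in the puncture-free region --- making the top nonsingular leaves nonessential --- and cancel that pair of elliptic points as above. If $v$ has degree $3$, two of its three tiles are adjacent and carry hyperbolic singularities of the same sign; after clearing the punctures from these two tiles I would apply the merge-and-separate move of Figure~\ref{fig:monkeysaddle} to reduce $v$ to degree $2$ and finish as in the degree-$2$ case.

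The main obstacle is the puncture-clearing at the degree-$3$ vertex: Lemma~\ref{lem:nudgeslidepush} moves a puncture over a singular leaf only when $\lambda$ is oriented compatibly or by relaying through an adjacent tile of opposite orientation, and the two same-sign tiles we merge offer no such relay to each other. I expect to handle this by routing any trapped puncture out of those tiles across their \emph{nonsingular} edges --- into the third tile at $v$ or into tiles further afield, which is always a booklink isotopy --- and, should such a route seem blocked by a strand of $\lambda$, arguing that this blockage together with a leaf of the jammed tile exhibits a nonessential family elsewhere on $S$, so that the first alternative applies after all; failing both, I would simply pass to another degree-$2$ or degree-$3$ vertex, which is available since $2V_2+V_3\ge 8$ far exceeds the two punctures present. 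Pinning down the orientation bookkeeping so that one of these routes always succeeds is the crux of the argument.
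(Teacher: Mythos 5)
Your plan is, at its core, the same as the paper's: you form the multigraph, run the Euler count $2V_2+V_3\ge 8$, handle a bivalent vertex by pushing punctures through the opposite-orientation tiles (Lemma~\ref{lem:nudgeslidepush}) and then applying the exchange move, and handle the absence of bivalent vertices by a monkey-saddle move at a degree-$3$ vertex. And your fallback resolution of the degree-$3$ obstacle --- pass to another degree-$3$ vertex because the count far exceeds the number of punctured regions --- is exactly what the paper does: it makes the count precise (each puncture lies in one product family, which touches two elliptic points, so at most $4$ elliptic points are tainted, while $V_2=0$ forces $V_3\ge 8$) and selects a clean degree-$3$ vertex from the start.

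The differences are that you spend effort trying to \emph{clear} punctures where the paper simply \emph{avoids} them, and one of your clearing routes doesn't exist. In the nonessential case the paper only ever cancels along a nonessential family that is already away from the punctures; since there are at most two punctured families, this is the right level of generality, matches the lemma's phrasing (``away from the punctures''), and avoids the orientation constraints of Lemma~\ref{lem:nudgeslidepush} entirely. For the degree-$3$ case, your proposed route of ``sliding a trapped puncture across a \emph{nonsingular} edge into the third tile or further afield'' misreads Lemma~\ref{lem:nudgeslidepush}: crossing a chosen nonsingular dividing leaf merely reassigns which tile the puncture is counted in, within the \emph{same} product family --- it does not relocate the puncture to a different edge of the multigraph. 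Genuine relocation requires crossing a singular leaf, which is exactly where the orientation obstruction bites. Likewise your ``blockage exhibits a nonessential family elsewhere'' clause is unsubstantiated. Neither of these is a fatal gap only because your third option --- the counting argument --- is the one that actually carries the proof, and you should promote it to the primary strategy and drop the other two.
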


\begin{proof}
    Observe that each puncture occurs in a product neighborhood of nonsingular leaves, and so there are at most two neighborhoods complicated by punctures. In any other neighborhood where we locate a nonessential leaf, we may freely apply Lemma~\ref{lem:nonessential} and cancel its terminating elliptic points.
    
    If the only remaining neighborhoods of nonsingular leaves are essential or contain punctures, then we once more consider the multigraph with a vertex for each elliptic point and an edge for each product family of nonsingular leaves. At most 4 of the elliptic points bound such a family with a puncture.
    
    As before, a bivalent vertex corresponds to a piece of the surface that looks like Figure~\ref{fig:exchange_tiles} with a pair of hyperbolic singularities of opposite sign. Lemma~\ref{lem:nudgeslidepush} allows us to push any punctures across one of the singular leaves and off the bottom of the picture, after which we may apply Lemma~\ref{lem:exchange} and then Lemma~\ref{lem:nonessential} to perform an exchange move and then cancel two elliptic points.
	
	If there are more than 2 vertices but none of degree 2, then we apply the Euler characteristic inequality of the proof of Lemma~\ref{lem:exchange}: since $2V_2 + V_3 \geq 8$, then $V_2 = 0$ implies $V_3 \geq 8$. At most 4 of the elliptic points touch regions with punctures, and so we may apply the transformation of Figure~\ref{fig:monkeysaddle} to one of the other degree-3 vertices, generate a new degree-2 vertex, and then continue with an exchange move and cancellation.
\end{proof}

    These lemmas may now be knit together in an induction to prove the theorem.
    
\begin{proof}[Proof of Theorem~\ref{thm:composite}]
    Say $S$ is a splitting sphere for a composite booklink $\lambda$. First, we put it in foliated general position by Lemma~\ref{lem:genposn} then decompose it into tiles by Lemma~\ref{lem:tiles}. Next, when we consider the location of the booklink punctures, Lemma~\ref{lem:nudgeslidepush} allows us to slide a puncture across tile boundaries by redefining the tile division or even across a singular leaf if the tile has the appropriate orientation. Lemma~\ref{lem:scc_v2} eliminates SCCs, even those which occur in the same tile as a puncture. Finally, inductive application of Lemma~\ref{lem:nonessential_exchange_v2} allows to reduce ourselves to two elliptic points by either canceling a pair connected by a nonessential leaf or by performing an exchange move to create a nonessential leaf.
\end{proof}

Once again, Theorem~\ref{thm:composite} allows a spectrum result in Corollary~\ref{cor:compositespectrum} which mirrors the classical bridge and braid formulas. At $d=0$, it restricts to Birman and Menasco's result in~\cite{birman1990studying} that the braid index is additive (minus one) under composites. Similarly, $\bb{d}{L} = 0$ exactly for $d \geq \bridge(L_1) + \bridge(L_2)-1$, matching Schubert's result in~\cite{schubert1956knoten} that the bridge index is also additive (minus one). The corollary additionally provides a formula for the intermediate values $0 < d < \bridge(L)$:
\[\bb{d}{L} = \min_{d_1+d_2=d+1}\big[\bb{d_1}{L_1} + \bb{d_2}{L_2}\big].\]

\begin{figure}\centering
    \begin{subfigure}{\textwidth}\centering
        {\pgfkeys{/pgf/fpu/.try=false}%
\ifx\XFigwidth\undefined\dimen1=0pt\else\dimen1\XFigwidth\fi
\divide\dimen1 by 5982
\ifx\XFigheight\undefined\dimen3=0pt\else\dimen3\XFigheight\fi
\divide\dimen3 by 1076
\ifdim\dimen1=0pt\ifdim\dimen3=0pt\dimen1=3946sp\dimen3\dimen1
  \else\dimen1\dimen3\fi\else\ifdim\dimen3=0pt\dimen3\dimen1\fi\fi
\tikzpicture[x=+\dimen1, y=+\dimen3]
{\ifx\XFigu\undefined\catcode`\@11
\def\temp{\alloc@1\dimen\dimendef\insc@unt}\temp\XFigu\catcode`\@12\fi}
\XFigu3946sp
\ifdim\XFigu<0pt\XFigu-\XFigu\fi
\clip(408,-1963) rectangle (6390,-887);
\tikzset{inner sep=+0pt, outer sep=+0pt}
\pgfsetfillcolor{black}
\pgftext[base,left,at=\pgfqpointxy{6161}{-1776}] {\fontsize{8}{9.6}\usefont{T1}{ptm}{m}{n}$\tau_2$}
\pgfsetlinewidth{+7.5\XFigu}
\pgfsetstrokecolor{black}
\draw (2250,-1464)--(2550,-1464);
\draw (2250,-1535)--(2550,-1535);
\draw (4575,-1425)--(4725,-1500)--(4575,-1575);
\draw (4350,-1464)--(4650,-1464);
\draw (4350,-1535)--(4650,-1535);
\draw  (1270,-1500) circle [radius=+34];
\draw (675,-1500)--(1235,-1501);
\draw  (5545,-1500) circle [radius=+34];
\draw (4950,-1500)--(5510,-1501);
\draw  (3445,-1500) circle [radius=+34];
\draw (2850,-1500)--(3410,-1501);
\pgfsetroundcap
\pgfsetdash{{+60\XFigu}{+60\XFigu}}{++0pt}
\draw  (3450,-1500) circle [radius=+450];
\draw  (5550,-1500) circle [radius=+450];
\pgfsetlinewidth{+15\XFigu}
\pgfsetdash{}{+0pt}
\pgfsetstrokecolor{white}
\draw  (6300,-1575) circle [radius=+75];
\pgfsetcolor{black}
\filldraw  (5909,-1516) circle [radius=+22];
\filldraw  (6095,-1486) circle [radius=+22];
\pgfsetlinewidth{+7.5\XFigu}
\pgfsetdash{{+60\XFigu}{+60\XFigu}}{++0pt}
\draw  (1276,-1506) circle [radius=+450];
\pgfsetbuttcap
\pgfsetlinewidth{+15\XFigu}
\pgfsetdash{}{+0pt}
\draw (3695,-1435)--(3645,-1315)--(3750,-1385);
\draw (4035,-1590)--(4140,-1670)--(4095,-1540);
\draw (1538,-1463)--(1488,-1343)--(1593,-1413);
\draw (1845,-1570)--(1950,-1650)--(1905,-1520);
\draw (5831,-1486)--(5786,-1357)--(5882,-1447);
\draw (6110,-1543)--(6212,-1639)--(6167,-1498);
\draw (3675,-1350)--(3825,-1575);
\draw (3975,-1425)--(4125,-1650);
\draw (3750,-1275)--(3825,-1390);
\draw (3960,-1605)--(4050,-1725);
\pgfsetbeveljoin
\draw (1500,-1350)--(1500,-1351)--(1503,-1355)--(1510,-1364)--(1520,-1379)--(1534,-1398)
  --(1549,-1420)--(1565,-1442)--(1581,-1462)--(1595,-1480)--(1608,-1496)--(1619,-1509)
  --(1630,-1520)--(1640,-1530)--(1650,-1538)--(1662,-1545)--(1673,-1552)--(1685,-1558)
  --(1698,-1564)--(1711,-1569)--(1723,-1573)--(1736,-1577)--(1748,-1582)--(1759,-1586)
  --(1769,-1590)--(1779,-1595)--(1788,-1600)--(1796,-1606)--(1804,-1614)--(1812,-1623)
  --(1821,-1634)--(1830,-1648)--(1840,-1664)--(1850,-1681)--(1860,-1698)--(1868,-1711)
  --(1873,-1720)--(1875,-1724)--(1875,-1725);
\draw (1575,-1275)--(1575,-1276)--(1577,-1280)--(1582,-1289)--(1590,-1302)--(1600,-1319)
  --(1610,-1336)--(1620,-1352)--(1629,-1366)--(1638,-1377)--(1646,-1386)--(1654,-1394)
  --(1663,-1400)--(1671,-1405)--(1681,-1410)--(1691,-1414)--(1702,-1418)--(1714,-1423)
  --(1727,-1427)--(1739,-1431)--(1752,-1436)--(1765,-1442)--(1777,-1448)--(1788,-1455)
  --(1800,-1463)--(1810,-1470)--(1820,-1480)--(1831,-1491)--(1842,-1504)--(1855,-1520)
  --(1869,-1538)--(1885,-1558)--(1901,-1580)--(1916,-1602)--(1930,-1621)--(1940,-1636)
  --(1947,-1645)--(1950,-1649)--(1950,-1650);
\draw (5775,-1350)--(5775,-1351)--(5778,-1354)--(5784,-1363)--(5794,-1378)--(5807,-1396)
  --(5821,-1417)--(5836,-1437)--(5850,-1456)--(5862,-1472)--(5872,-1486)--(5881,-1496)
  --(5889,-1504)--(5895,-1509)--(5900,-1513)--(5905,-1514)--(5909,-1514)--(5912,-1513)
  --(5915,-1509)--(5917,-1505)--(5919,-1499)--(5920,-1491)--(5920,-1483)--(5920,-1474)
  --(5919,-1464)--(5918,-1455)--(5917,-1445)--(5915,-1435)--(5913,-1425)--(5909,-1413)
  --(5906,-1402)--(5901,-1389)--(5895,-1374)--(5888,-1358)--(5881,-1340)--(5872,-1322)
  --(5864,-1304)--(5857,-1289)--(5852,-1280)--(5850,-1276)--(5850,-1275);
\draw (6225,-1650)--(6225,-1649)--(6222,-1646)--(6216,-1637)--(6206,-1622)--(6193,-1604)
  --(6179,-1583)--(6164,-1563)--(6150,-1544)--(6138,-1528)--(6128,-1514)--(6119,-1504)
  --(6111,-1496)--(6105,-1491)--(6100,-1488)--(6095,-1486)--(6091,-1486)--(6088,-1487)
  --(6085,-1491)--(6083,-1495)--(6081,-1501)--(6080,-1509)--(6080,-1517)--(6080,-1526)
  --(6081,-1536)--(6082,-1545)--(6083,-1555)--(6085,-1565)--(6088,-1575)--(6091,-1587)
  --(6094,-1598)--(6099,-1611)--(6105,-1626)--(6112,-1642)--(6119,-1660)--(6128,-1678)
  --(6136,-1696)--(6143,-1711)--(6148,-1720)--(6150,-1724)--(6150,-1725);
\pgftext[base,left,at=\pgfqpointxy{1201}{-990}] {\fontsize{8}{9.6}\usefont{T1}{ptm}{m}{n}$S$}
\pgftext[base,left,at=\pgfqpointxy{423}{-1402}] {\fontsize{8}{9.6}\usefont{T1}{ptm}{m}{n}$H_{min}$}
\pgftext[base,left,at=\pgfqpointxy{1434}{-1298}] {\fontsize{8}{9.6}\usefont{T1}{ptm}{m}{n}$\tau_1$}
\pgftext[base,left,at=\pgfqpointxy{1886}{-1779}] {\fontsize{8}{9.6}\usefont{T1}{ptm}{m}{n}$\tau_2$}
\pgftext[base,left,at=\pgfqpointxy{3608}{-1264}] {\fontsize{8}{9.6}\usefont{T1}{ptm}{m}{n}$\tau_1$}
\pgftext[base,left,at=\pgfqpointxy{4063}{-1799}] {\fontsize{8}{9.6}\usefont{T1}{ptm}{m}{n}$\tau_2$}
\pgftext[base,left,at=\pgfqpointxy{5712}{-1307}] {\fontsize{8}{9.6}\usefont{T1}{ptm}{m}{n}$\tau_1$}
\pgfsetmiterjoin
\pgfsetlinewidth{+7.5\XFigu}
\draw (2475,-1425)--(2625,-1500)--(2475,-1575);
\endtikzpicture}%
        \caption{}\label{fig:sum_opposite}
    \end{subfigure}\\\medskip
    \begin{subfigure}{\textwidth}\centering
        \input{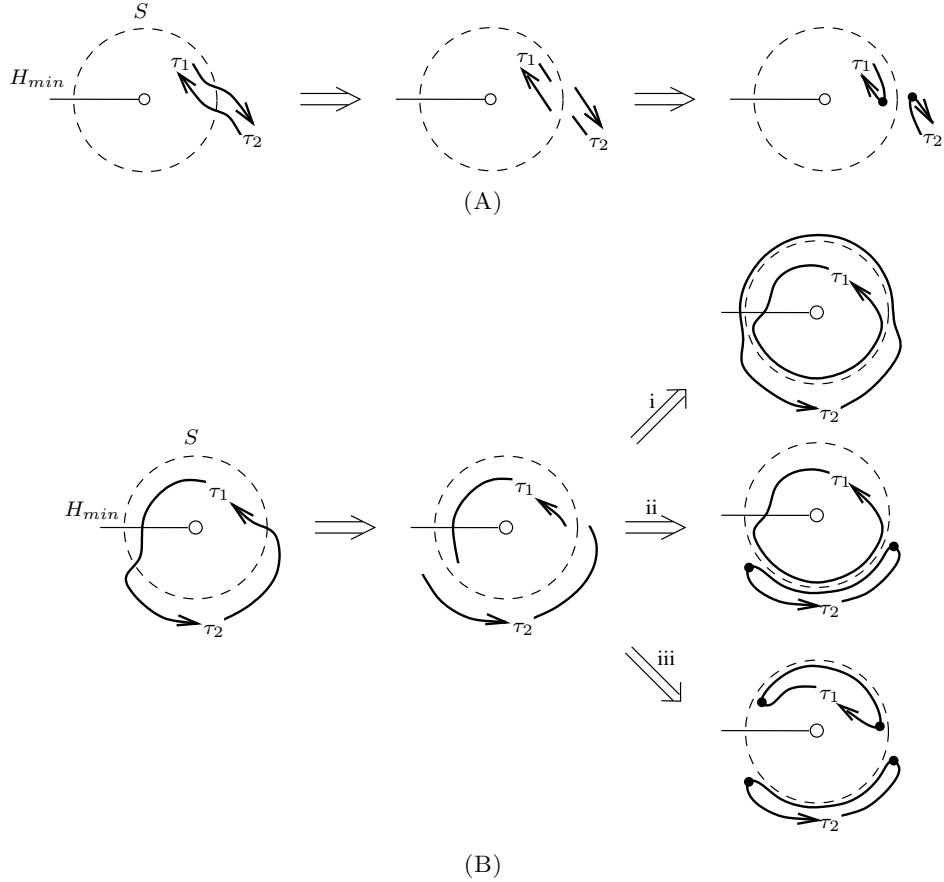}
        \caption{}\label{fig:sum_same}
    \end{subfigure}
    \caption{Decomposing a composite booklink into booklink summands when the splitting sphere $S$ intersects it at points of \eqsubref{fig:sum_opposite} the opposite orientation or \eqsubref{fig:sum_same} the same orientation.}\label{fig:sum}
\end{figure}

This corollary is less straightforward than that for split links because there are several possible distinct ways to decompose a booklink into two summands, as shown in Figure~\ref{fig:sum}. We can see from the figure that the spectrum is at worst additive. To prove the exact formula, we will evaluate the bounds given by each decomposition and note that the one given in Figure~\ref{fig:sum_opposite} is at least as good as the one in~\ref{fig:sum_same}, and any booklink other than a pure braid may be constructed by this method.

\begin{proof}[Proof of Corollary~\ref{cor:compositespectrum}]
    Say that $\lambda$ represents a composite link $L_1 \# L_2$. By Theorem~\ref{thm:composite}, we may transform the booklink into a composite booklink by a series of booklink isotopies and exchange moves, after which the splitting sphere $S$ is in foliated general position. We cut $\lambda$ at $S$ into two tangles $\tau_1$ and $\tau_2$ which we close off into booklinks $\lambda_1$ and $\lambda_2$. We use $d$ (respectively, $d_i$) and $n$ (respectively, $n_i$) for the bridge and braid indices of $\lambda$ (respectively, $\lambda_i$).

    We consider two cases separately as shown in Figure~\ref{fig:sum}. By general position, $\lambda$ does not have a critical point where it pierces $S$, and so it is locally wrapping around the binding either clockwise or counterclockwise. The situation naturally breaks into two cases depending on whether it is wrapping in the same direction at both punctures or not.  
    
    As in Figure~\ref{fig:sum_opposite}, if $\lambda$ is oriented in opposite directions at the points where it pierces $S$, we may decompose it into tangles $\tau_i$ and then close them off into booklinks $\lambda_i$, thereby increasing the bridge index by one without altering the braid index, i.e., $d = d_1 + d_2 - 1$ and $n = n_1 + n_2$. Each arc has an odd number of critical points, so we cannot close it off without adding one more. Additionally, we may assume that the $\lambda_i$ attains the minimal possible braid index for the given number of critical points and that it does so on the same page, which we call $H_{min}$. If not, we may replace $\lambda_i$ by a different booklink representative $\lambda_i'$ which does attain the minimum, then we may rotate one so that they both have minimal intersection number with the same page $H_{min}$, and then we may isotope $\lambda_i'$ to move one critical point over to $S$ at the appropriate location without passing through $H_{min}$; this may change the booklink types but not the link types or the indices $(d_i,n_i)$. Therefore, we may conclude, 
    \begin{equation}
    d = d_1 + d_2 - 1 \qquad \text{and} \qquad n = \bb{d_1}{L_1} + \bb{d_2}{L_2}.\label{eqn:sum_opposite}
    \end{equation}

    As in Figure~\ref{fig:sum_same}, if $\lambda$ has the same orientation at both points where it meets $S$, then there are two ways to close off each tangle $\tau_i$ into a booklink $\lambda_i$, either by a move that preserves orientation or one that reverses it twice: (i) both are closed by the orientation-preserving move; (ii) has one of each; (iii) both are closed by the move that does not preserve orientation. The orientation-preserving move does not alter the number of critical points, whereas the other move adds a pair. The effect on braid index is more subtle. Observe that the move which is non-orientation-preserving, for example $\lambda_2$ in (iii), may not attain the minimal braid index in its booklink class because it is not generic to have a diagram with an arc between two critical points without any crossings. For example, see Figure~\ref{fig:fig8-11}; if there were a $(1,1)$-representative for the fig. 8 knot with such an arc, then there would be a $(0,2)$-representative for it as well. On the other hand, the orientation-preserving move produces a generic booklink such as $\lambda_1$ in (i) that we may assume minimizes braid index at any fixed page $H_{min}$. If not, we can replace it with a booklink that realizes the minimum intersection number with the desired page. We can also scale the $\theta$ so that $\lambda$ in fact has no crossings or critical points in the bottom half of its disk, so the arc depicted is present.

    Therefore, (i) gives us the bounds of 
    \begin{equation}
    d = d_1 + d_2 \qquad \text{and} \qquad  n = \bb{d_1}{L_1} + \bb{d_2}{L_2} - 1.\label{eqn:sum_same}
    \end{equation}
    We will later minimize over all choices of $d_1+d_2=d$, and we now show that methods (ii) and (iii) can do no better when we pass to the minimum. Method (ii) satisfies $n=n_1+n_2$, where $n_1=\bb{d_1}{L_1}$ and $n_2 \geq \bb{d_2}{L_2}$, and the inequality may be strict; however, by the construction, we can assume $n_2 =\bb{d_2-1}{L_2}$, in which case \[d = d_1 + (d_2-1) \qquad \text{and} \qquad  n = \bb{d_1}{L_1} + \bb{d_2-1}{L_2} - 1,\] which is subsumed by Equation~\ref{eqn:sum_same} when we pass to the minimum. Similarly, for (iii), $n=n_1+n_2+1$ where we may assume $n_i=\bb{d_i-1}{L_i}-1$, and so \[d=(d_1-1)+(d_2-1) \qquad \text{and} \qquad n = \bb{d_1-1}{L_1} + \bb{d_2-1}{L_2} - 1.\]
    
    Finally, we establish the bounds. If $d=0$, then $\lambda$ is a pure braid and necessarily meets the splitting sphere at two points where it has the same orientation, and so we use Equation~\ref{eqn:sum_same} with $d=0$, \[\bb{0}{L} = \bb{0}{L_1} + \bb{0}{L_2}.\] On the other hand, if $d \geq \bridge(L_1) + \bridge(L_2)$, i.e., $d > \bridge(L)$, then we may realize $\lambda$ using two plats summed at critical points as in Figure~\ref{fig:sum_opposite}, and \[\bb{d}{L} = 0.\]
    
    For intermediate $d$, we must minimize $n$ over all booklinks with spheres $S$ of whichever type can occur. Observe first that any splitting sphere with punctures of the same orientation as in Figure~\ref{fig:sum_same} may be converted into one with opposite oriented punctures as in Figure~\ref{fig:sum_opposite}: we can select a small region around one of the punctures and push a finger of the sphere along $\lambda$ until it passes through a critical point, changing the orientation of the puncture; with sufficient care, this finger will not intersect the binding. Next, if a given booklink admits splitting spheres of both possible types, then we may restrict our attention to the type in Figure~\ref{fig:sum_opposite}: $\bb{d_i+1}{L_i}\leq\bb{d_i}{L_i}-1$ as long as $d_i < \bridge(L_i)$, so
    \[\min_{d = d_1 + d_2 - 1} \big[\bb{d_1}{L_1} + \bb{d_2}{L_2}\big] \leq \min_{d = d_1 + d_2} \big[\bb{d_1}{L_1} + \bb{d_2}{L_2}-1\big].\] 
    Therefore, if $0 < d < \bridge(L_1)+\bridge(L_2)$,
    \[\bb{d}{L} = \min_{d_1+d_2=d+1}\big[\bb{d_1}{L_1} + \bb{d_2}{L_2}\big].\]
\end{proof}

\section{Booklink Spectra}\label{sec:spectra}

\begin{figure}\centering
    \includegraphics[scale=0.5]{./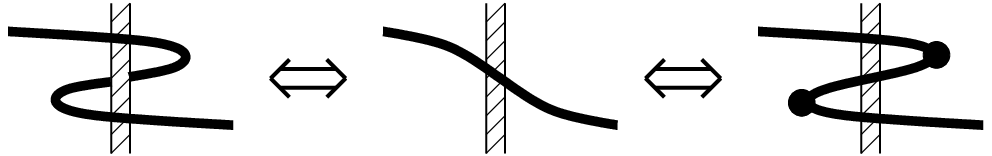}
    \caption{Two operations which increase bridge and braid index.}\label{fig:stable}
\end{figure}

With these recipes for calculating the spectra of split and composite booklinks, we may restrict our attention to non-split, prime links. In this paper, we calculate the spectra for knots up through 9 crossings.

First, we establish the general behavior of the spectrum. If a booklink has a $(d,n)$-representative, we may always generate a $(d+1,n)$-representative by perturbing locally to add a pair of critical points, or we may generate a $(d,n+1)$-representative by adding a braid-style stabilization; see Figure~\ref{fig:stable}. It is sometimes possible to trade braid index for bridge index, and vice versa. First, if braid index is positive, we may always trade an intersection with $H_{min}$ for a pair of critical points, which tells us the invariant $\bbb{d}$ is monotonic.

\begin{lem}\label{lem:monotonic}
    As a function of $d$, $\bbb{d}$ is strictly decreasing until it reaches zero.
\end{lem}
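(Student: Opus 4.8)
The plan is to prove the quantitative statement that whenever $\bb{d}{L}\ge 1$ there is a $(d+1,\bb{d}{L}-1)$-representative of $L$, so that $\bb{d+1}{L}\le\bb{d}{L}-1$; since every $\bbb{d}$ is a non-negative integer, this is exactly the assertion that the sequence strictly decreases as long as its terms are positive. (Once a term is $0$, i.e.\ once $L$ has a $(d,0)$-representative, perturbing locally to introduce one more pair of critical points produces a $(d+1,0)$-representative, so the sequence is then identically $0$; this is why ``until it reaches zero'' is the right qualifier.) The core of the argument is precisely the ``trade'' move advertised just before the lemma: convert one intersection with $H_{min}$ into a pair of critical points.

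Concretely, I would first choose a booklink $\lambda$ that is a $(d,n)$-representative of $L$ with $n=\bb{d}{L}\ge 1$. Applying Morse theory to the circle-valued function $\theta|_\lambda$, the minimal intersection number $n$ is attained on an open set of regular values of $\theta$, so I may fix a non-singular page $H_{\theta_0}$ meeting $\lambda$ transversely in exactly $n$ points, none of which is a critical point of $\lambda$. Pick one such point $p$ and a small ball $B$ around it in which $\lambda$ is a single unknotted arc $\alpha$ crossing $H_{\theta_0}$ once and carrying no critical point of $\lambda$; after orienting, $\theta$ is strictly monotone along $\alpha$ and along the arcs of $\lambda$ abutting the two ends of $\alpha$. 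Now replace $\alpha$ by an arc $\alpha'$ with the same endpoints which, rather than crossing $H_{\theta_0}$, dives to a radius $r_1<\mathrm{dist}(\lambda,\text{binding})$ near the axis, runs monotonically in $\theta$ the ``long way'' around the binding --- thereby crossing every page except those in a narrow angular sector about $\theta_0$, in particular missing $H_{\theta_0}$ --- and then climbs back out to the other endpoint. Because $r_1$ is smaller than the distance from $\lambda$ to the binding, $\alpha'$ stays embedded and disjoint both from the binding and from $\lambda\setminus\alpha$, so the resulting link $\lambda'$ is isotopic to $\lambda$ and hence represents $L$; because the long-way-around portion of $\alpha'$ runs through $\theta$ in the sense opposite to the abutting arcs of $\lambda$, it contributes exactly one new local maximum and one new local minimum of $\theta$ (made non-degenerate by choosing a generic radial profile), so $\lambda'$ is a booklink with $\bridge(\lambda')=d+1$. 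Finally $\lambda'$ meets $H_{\theta_0}$ in $n-1$ points, so $\braid(\lambda')\le n-1$, giving $\bb{d+1}{L}\le n-1=\bb{d}{L}-1$.

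The step I expect to be fussiest is the bookkeeping that the finger move creates \emph{exactly} one pair of critical points: one must check it creates no fewer (which would require $\theta$ along $\alpha'$ to match the sense of the abutting arcs, impossible here since $\alpha'$ must avoid the sector through $\theta_0$) and no more (which is why $\alpha'$ is taken monotone on its long-way-around portion), and one must verify the embeddedness and disjointness-from-binding claims by pinning down the radial profile of $\alpha'$. A secondary point needing care is the Morse-theoretic selection of a regular page realizing the braid index, since the whole move rests on $p$ being an honest transverse intersection away from the critical points of $\lambda$. It is worth noting that it does \emph{not} matter that $\alpha'$ picks up many new intersections with other pages: the braid index is a minimum over all pages, so exhibiting the single improved page $H_{\theta_0}$ already suffices.
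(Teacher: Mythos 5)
Your approach is essentially the paper's: fix a page $H_{min}$ realizing the braid index, pick one transverse intersection $p$, and reroute that strand of $\lambda$ past the binding so that it trades the intersection with $H_{min}$ for one pair of non-degenerate critical points of $\theta$, producing a $(d+1,\bbb{d}-1)$-representative. The one soft spot is the sentence ``so the resulting link $\lambda'$ is isotopic to $\lambda$'': disjointness of $\alpha'$ from $\lambda\setminus\alpha$ and from the binding makes $\lambda'$ an embedded booklink, but by itself does not make it isotopic to $\lambda$ --- you still need $\alpha\cup\alpha'$ to bound a disk missing $\lambda\setminus\alpha$, which is exactly why the paper phrases the move as pushing the strand along a path in $H_{min}$ to the binding (chosen to avoid the other $n-1$ intersection points), so the isotopy is part of the construction rather than something to be verified afterward. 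Your dive/climb portions of $\alpha'$ must likewise be routed to miss the other intersection points of $\lambda$ with $H_{min}$, a point your disjointness argument (which only controls the part of $\alpha'$ at radius $\le r_1$) does not address; adding that choice of path closes the gap and recovers the paper's proof.
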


Roughly speaking, we convert the left booklink in Figure~\ref{fig:stable} into the right booklink.

\begin{proof}
    Say $\bbb{d} > 0$. Select a $(d,\bbb{d})$-representative for the booklink. Pick any intersection with $H_{min}$ and find a path to the binding through $H_{min}$. Push the link along the path to eliminate an intersection with $H_{min}$ and add a pair of critical points. This is now a $(d+1,\bbb{d} - 1)$-representative, so $\bbb{d+1} \leq \bbb{d} - 1$. 
\end{proof}

Similarly, while it is usually not possible to convert a pair of critical points into an intersection with $H_{min}$, it is possible if the booklink is in classical plat form, which determines the end of the $\{\bbb{d}\}$ sequence.

\begin{prop}\label{prop:endings}
    If a link has braid index $\mathfrak{n}$ and bridge index $\mathfrak{d}$, then $\bbb{0} = \mathfrak{n}$, $\bbb{\mathfrak{d}} = 0$ and $\bbb{\mathfrak{d}-1} = 1$; in particular, its spectrum is $\{\mathfrak{n}, \ldots, 1, 0\}$.
\end{prop}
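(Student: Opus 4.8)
The plan is to establish the three equalities $\bbb{0}=\mathfrak{n}$, $\bbb{\mathfrak{d}-1}=1$, and $\bbb{\mathfrak{d}}=0$ separately, and then read the stated shape of the spectrum off Lemma~\ref{lem:monotonic}. First note that $\bbb{d}$ is finite for every $d$: beginning from a braid realizing $\mathfrak{n}$, the local perturbation of Figure~\ref{fig:stable} adds critical pairs, producing a $(d,\mathfrak{n})$-representative, so $\bbb{d}\le\mathfrak{n}$. The equality $\bbb{0}=\mathfrak{n}$ is then immediate, since a $(0,n)$-representative is by definition an $n$-strand braid and $\bbb{0}$ minimizes $n$ over these.

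Next I would pin down the vanishing locus of $\bbb{}$. A $(d,0)$-representative misses some page $H_{\theta_0}$, hence lies in the ball $S^3\setminus H_{\theta_0}$, on which $\theta$ restricts to a Morse function with exactly $d$ local maxima along the booklink; reading this tangle as a bridge presentation shows the link has bridge number at most $d$, so $\bbb{d}=0$ forces $\mathfrak{d}\le d$. Conversely, a minimal bridge presentation is itself a $(\mathfrak{d},0)$-representative, and adding critical pairs as above upgrades it to a $(d,0)$-representative for every $d\ge\mathfrak{d}$. Hence $\bbb{d}=0$ precisely when $d\ge\mathfrak{d}$; in particular $\bbb{\mathfrak{d}}=0$ and $\bbb{\mathfrak{d}-1}\ge 1$.

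It then remains to exhibit a $(\mathfrak{d}-1,1)$-representative, which yields $\bbb{\mathfrak{d}-1}\le 1$ and hence equality. This is exactly the first step of Alexander's algorithm pictured in Figure~\ref{fig:generic_knot}: put $L$ in a plat presentation realizing its bridge number in which one outermost bridge arc is free of crossings --- such a presentation exists by the fact recalled in the Introduction that every link admits a plat form with a crossing-free strand --- and slip that arc over the braid axis. Passing to the universal cover of the $\theta$-circle, one checks that this isotopy cancels the local maximum and the local minimum of $\theta$ flanking the slipped arc, dropping $d$ by one, while leaving that arc winding exactly once around the binding, raising $n$ from $0$ to $1$. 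The result is a $(\mathfrak{d}-1,1)$-representative of $L$. Finally, combining $\bbb{0}=\mathfrak{n}$, $\bbb{\mathfrak{d}-1}=1$, $\bbb{\mathfrak{d}}=0$ with the strict decrease of $\bbb{d}$ until it reaches $0$ (Lemma~\ref{lem:monotonic}) forces the spectrum to have the form $\{\mathfrak{n},\ldots,1,0\}$ (and incidentally re-derives $\mathfrak{d}\le\mathfrak{n}$, since $\bbb{0}>\cdots>\bbb{\mathfrak{d}}=0$ are $\mathfrak{d}+1$ distinct nonnegative integers with maximum $\mathfrak{n}$).

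I expect the main obstacle to be making the arc-slip rigorous rather than the formal assembly above: one must verify that the crossing-free outermost arc genuinely can be dragged around the binding without colliding with the rest of $L$ --- this is where crossing-freeness is essential, and its failure for a generic bad arc is exactly what makes such a move "usually" impossible, as the surrounding discussion warns --- and that the net effect on the pair $(d,n)$ is precisely $(-1,+1)$ and not, say, $(0,+2)$. This should reduce to a short but careful Morse-theoretic computation, with the figure-8 passage from the first to the second panel of Figure~\ref{fig:generic_knot} serving as the model case.
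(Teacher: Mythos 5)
Your proof follows essentially the same route as the paper: start from a minimal $2\mathfrak{d}$-plat with a crossing-free outermost strand and push that strand through the binding to obtain a $(\mathfrak{d}-1,1)$-representative, then combine with Lemma~\ref{lem:monotonic} to read off the shape of the spectrum. You spell out a few steps the paper leaves implicit (in particular that $\bbb{d}=0$ precisely for $d\ge\mathfrak{d}$, so that $\bbb{\mathfrak{d}-1}\ge 1$ and the exhibited representative gives equality rather than just an upper bound), but the key move and the overall structure are the same.
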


\begin{proof}
    By definition, $\bbb{\mathfrak{d}} = 0$ and $\bbb{0} = \mathfrak{n}$. Additionally, the link has a $2\mathfrak{d}$-plat presentation where one of the plat strands is not involved in any crossings, i.e., a booklink representative with bridge index $\mathfrak{d}$ and braid index $0$ with one arc between a pair of critical points with unobstructed access to the binding. Push this strand through the binding, increasing the braid index to $1$ and decreasing the bridge index to $\mathfrak{d}-1$.
\end{proof}

In particular, the spectrum for a 2-bridge link is determined by its braid index.

\begin{cor}\label{cor:2-bridge}
    Any 2-bridge link has spectrum $(\mathfrak{n},1,0)$.
\end{cor}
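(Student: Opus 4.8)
The plan is a one-step deduction from Proposition~\ref{prop:endings}. By definition, a 2-bridge link is one whose bridge index is $\mathfrak{d}=2$; let $\mathfrak{n}$ denote its braid index. Applying Proposition~\ref{prop:endings} with $\mathfrak{d}=2$ immediately gives $\bbb{0}=\mathfrak{n}$, $\bbb{\mathfrak{d}-1}=\bbb{1}=1$, and $\bbb{\mathfrak{d}}=\bbb{2}=0$. Since the indices $d=0,1,2$ are consecutive, there are no intermediate terms to fill in between $\bbb{0}$ and $\bbb{1}$, and the tail $\bbb{d}$ for $d\geq 2$ is identically zero (as noted right after the definition of the spectrum, or equivalently by monotonicity from Lemma~\ref{lem:monotonic} once $\bbb{2}=0$). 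Hence the spectrum is exactly $(\mathfrak{n},1,0)$.

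There is essentially no obstacle here: the content is entirely contained in Proposition~\ref{prop:endings}, and the corollary is just the specialization $\mathfrak{d}=2$, recorded separately because 2-bridge links are a frequently cited family. If one wanted to be scrupulous, the only thing worth a sentence is confirming that $\mathfrak{n}\geq 2$ for a genuine 2-bridge link (the braid-index-$1$ links are exactly the unknot, which is $1$-bridge), so that $\bbb{1}=1$ and $\bbb{0}=\mathfrak{n}$ are mutually consistent with Lemma~\ref{lem:monotonic}; but this consistency check is not logically needed, since Proposition~\ref{prop:endings} already asserts the full spectrum.
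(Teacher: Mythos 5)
Your proof is correct and is essentially the same argument the paper intends: the corollary is stated immediately after Proposition~\ref{prop:endings} precisely as its specialization to $\mathfrak{d}=2$, with no additional content. Your extra remark about $\mathfrak{n}\geq 2$ is a harmless consistency check, not needed for the deduction.
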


Likewise, the spectrum is forced for the class of links called \emph{BB-links} whose bridge and braid indices are the same.

\begin{cor}\label{cor:bb}
    A BB-link with classical bridge and braid indices of $\mathfrak{d}$ has spectrum $\{\mathfrak{d}, \mathfrak{d}-1, \cdots, 1, 0\}$.
\end{cor}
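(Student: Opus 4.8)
The plan is to derive this as a short counting consequence of the two immediately preceding results. Write $\mathfrak{d}$ for the common value of the classical bridge and braid indices of the BB-link $L$. First I would apply Proposition~\ref{prop:endings} with $\mathfrak{n} = \mathfrak{d}$: this pins down the two ends of the spectrum, namely $\bbb{0} = \mathfrak{d}$ and $\bbb{\mathfrak{d}} = 0$ (it also yields $\bbb{\mathfrak{d}-1} = 1$, which we will not separately need).

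Next I would invoke Lemma~\ref{lem:monotonic}: whenever $\bbb{d} > 0$ we have $\bbb{d+1} \leq \bbb{d} - 1$, so the sequence drops by at least one at each step until it reaches zero. Starting from $\bbb{0} = \mathfrak{d}$ and using that the values are non-negative integers, the value $0$ cannot occur at any index smaller than $\mathfrak{d}$; since $\bbb{\mathfrak{d}} = 0$, the sequence must in fact drop by \emph{exactly} one at each of the first $\mathfrak{d}$ steps. Therefore $\bbb{d} = \mathfrak{d} - d$ for $0 \leq d \leq \mathfrak{d}$ and $\bbb{d} = 0$ for $d \geq \mathfrak{d}$, which is exactly the claimed spectrum $\{\mathfrak{d}, \mathfrak{d}-1, \ldots, 1, 0\}$.

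There is no real obstacle here — the substantive work is already contained in Lemma~\ref{lem:monotonic} and Proposition~\ref{prop:endings}. The one point worth flagging is that the argument genuinely uses \emph{both} inputs: monotonicity by itself would allow a faster collapse to zero, while the endpoint values by themselves would say nothing about the interior terms. It is precisely the BB-hypothesis, which forces $\bbb{0} = \bridge(L)$, that rigidifies the descent, squeezing a strictly decreasing sequence of $\mathfrak{d}+1$ non-negative integers between the forced values $\mathfrak{d}$ and $0$ so that no slack remains.
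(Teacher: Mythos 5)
Your proof is correct and matches the paper's intended route through Proposition~\ref{prop:endings} and Lemma~\ref{lem:monotonic}. Just be careful with the sentence asserting that $0$ cannot appear before index $\mathfrak{d}$: that fact comes from $\mathfrak{d}$ being the classical bridge index (so $\bbb{d} > 0$ for all $d < \mathfrak{d}$ by definition of the spectrum), not from non-negativity plus monotonicity alone, which would equally permit, say, a collapse from $\mathfrak{d}$ straight to $0$ at the first step --- though your closing paragraph does correctly attribute the rigidity to the BB-hypothesis.
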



We now determine the spectra for all knots through 9 crossings. The above results provide enough information to determine the spectra for all knots through 8 crossings (except $8_{15}$) and 25 of the 49 knots with 9 crossings. The remaining 8- and 9-crossing knots are all 3-bridge and attain the lowest possible spectrum allowed, so we complete the tabulation of the spectra by demonstrating a booklink in Figure~\ref{fig:missing} which realizes the only unknown value in each spectrum. We list the results in Table~\ref{table:knot_spectra} and state that they are complete in Theorem~\ref{thm:knot_spectra}.

\begin{proof}[Proof of Theorem~\ref{thm:knot_spectra}]
The bridge and braid indices found in the table are taken from KnotInfo \cite{knotinfo}. The 2-bridge knots are determined by Corollary~\ref{cor:2-bridge} and the BB-links by Corollary~\ref{cor:bb}. The remaining knots are all 3-bridge, and so the spectrum is necessarily $\{\mathfrak{n}, \bbb{1}, 1, 0\}$, with $1 < \bbb{1} < \mathfrak{n}$ by Lemma~\ref{lem:monotonic}, and we demonstrate that $\bbb{1} = 2$ by providing a $(1,2)$-representative for each booklink in Figure~\ref{fig:missing}. These diagrams may be examined visually to verify that they have the required invariants, and we have calculated the DT-notation and checked it against KnotInfo to verify that these booklinks represent the appropriate links.
\end{proof}

\section{Future Work}\label{sec:futurework}

We recall our original motivation, to describe mathematically the process of applying Alexander's Theorem to a given knot or link. Experimentation suggests several interesting patterns; one of which is when we reach a point partway through we realize the next arc can only be resolved by two stabilizations, not by one, then any further arc will also require at least two. This suggests that the resolution of critical points gets progressively more difficult, rather than easier. We can express this in terms of the spectrum:

\begin{conj}\label{conj:concavity}
    The sequence $\{\bbb{d}\}$ is concave upwards, that is, \[\bbb{d-1} - \bbb{d} \geq \bbb{d} - \bbb{d+1}.\]
\end{conj}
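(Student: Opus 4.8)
We sketch the route we would take toward Conjecture~\ref{conj:concavity}. First note that, by Lemma~\ref{lem:monotonic}, every difference $\bbb{d-1}-\bbb{d}$ is already at least $1$ wherever $\bbb{d}>0$, so the conjecture asserts precisely that these differences are \emph{non-increasing} --- the formal shadow of ``once the going gets tough, it stays tough.'' Because of this, the conjecture already has content for $3$-bridge links: for such a link the spectrum is $\{\mathfrak{n},\bbb{1},1,0\}$, and concavity is exactly the inequality $2\bbb{1}\le\mathfrak{n}+1$.

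The first move is to reduce to prime, non-split links. Corollary~\ref{cor:splitspectrum} exhibits the spectrum of $L_1\cup L_2$ as the infimal convolution $d\mapsto\min_{d_1+d_2=d}[\bb{d_1}{L_1}+\bb{d_2}{L_2}]$ of the two component spectra, and Corollary~\ref{cor:compositespectrum} gives the same for $L_1\#L_2$ up to an index shift, together with the single boundary value $\bb{0}{L}=\bb{0}{L_1}+\bb{0}{L_2}-1$. The infimal convolution of convex (concave-upward) sequences is again convex; and the composite boundary adjustment only ever \emph{raises} the leading term relative to this convolution, since $\bb{0}{L_1}+\bb{0}{L_2}-1\ge\min\big(\bb{1}{L_1}+\bb{0}{L_2},\,\bb{0}{L_1}+\bb{1}{L_2}\big)$ by monotonicity, and raising the leading term of a convex sequence preserves convexity. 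Inducting on complexity, the conjecture for all links would follow from the conjecture for prime, non-split links (and for all knots from the conjecture for prime knots).

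For a prime link $L$ and $0<d<\mathfrak{d}(L)$, the plan is to read the two relevant differences off braid-foliated surfaces. Fix an optimal $(d,\bbb{d})$-booklink $\lambda$. Running one step of Alexander's algorithm --- pushing a strand across the binding, as in the proof of Lemma~\ref{lem:monotonic} --- produces a $(d+1,\bbb{d}-c_{+})$-representative, where $c_{+}\ge 1$ is the ``cost'' of that step and $\bbb{d}-\bbb{d+1}$ is the smallest cost over all strands and all optimal $\lambda$; dually, undoing a step produces a $(d-1,\bbb{d}+c_{-})$-representative, and $\bbb{d-1}-\bbb{d}$ is the largest $c_{-}$ that cannot be avoided. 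Each cost should be extracted from the foliation, in the sense of Lemmas~\ref{lem:genposn}--\ref{lem:exchange}, of the disk or annulus traced out by the strand being moved: a cheap move corresponds to a thin, lightly-foliated band, an expensive one to a band threaded by essential leaves and $h$-tiles. The technical heart of the argument would be a \emph{persistence} lemma: a foliated obstruction forcing cost $\ge k$ at bridge index $d$ cannot be destroyed by a single arc resolution, so it reappears at bridge index $d-1$ and forces $c_{-}\ge k$ there as well. This is the booklink analogue of the way an exchange-move obstruction propagates through the Birman--Menasco braid-foliation program.

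I expect this persistence step, rather than the reduction, to be the real difficulty. The underlying obstacle is that there is at present \emph{no} lower-bound technique for $\bbb{d}$ at intermediate $d$: Morton--Franks--Williams and Bennequin-type inequalities pin down $\bbb{0}$, Schubert-type arguments pin down the first $d$ with $\bbb{d}=0$, but nothing is known between these extremes, which is exactly why the table of Theorem~\ref{thm:knot_spectra} had to be completed by exhibiting explicit booklinks. Making ``persistence of the obstruction'' precise would amount to building such a tool. A sensible intermediate goal is to verify the conjecture for families in which $\bbb{d}$ can be computed directly --- torus links, $2$-bridge links (Corollary~\ref{cor:2-bridge}), and BB-links (Corollary~\ref{cor:bb}) --- and, failing a proof, to look for a potential counterexample among high-crossing $3$-bridge links, where $\mathfrak{n}-\bbb{1}$ is the only difference not already forced to equal $1$.
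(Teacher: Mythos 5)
The statement you were asked to address is labeled a \emph{Conjecture} in the paper; the authors leave it entirely open, offering only the heuristic ``once the going gets tough, it stays tough'' and the observation that the tabulated spectra are consistent with it. There is therefore no proof in the paper against which to compare your argument, and your sketch --- which openly flags its own missing step --- is an honest research plan rather than a proof.

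Within those limits, the reduction to prime, non-split links is sound and is a genuine observation not made in the paper. The discrete infimal convolution of convex integer sequences is convex (a one-line case analysis on which of the two optimal indices at $d$ and $d+2$ can be shifted by one), and you correctly observe that the exceptional $d=0$ value in Corollary~\ref{cor:compositespectrum} can only sit above the shifted convolution, by Lemma~\ref{lem:monotonic}, so raising the leading term cannot spoil convexity; the split case via Corollary~\ref{cor:splitspectrum} is cleaner still. Note, however, that your proposed test families are empty of content: BB-links have $\bbb{d}=\mathfrak{d}-d$ (constant differences), and $2$-bridge links have spectrum $\{\mathfrak{n},1,0\}$ with $\mathfrak{n}\ge 2$, so both are trivially convex. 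The only constraint that carries any force is the one you already isolated, $2\bbb{1}\le\mathfrak{n}+1$ for $3$-bridge knots, and more generally the behavior of $\bbb{d}$ at intermediate $d$ for prime links. That is exactly where your sketch stops: there is at present \emph{no} lower-bound technique for $\bbb{d}$ between the classical braid index and bridge index --- which is precisely why Table~\ref{table:knot_spectra} had to be completed by exhibiting explicit witnesses rather than by inequalities --- and the ``persistence lemma'' you gesture toward is not a consequence of Lemmas~\ref{lem:genposn}--\ref{lem:exchange} but would be an entirely new piece of foliation machinery. Your paragraph correctly identifies the obstruction; it does not overcome it.
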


We may also profit from more explicitly describing this process of applying Alexander's Theorem. It may be enlightening to be able to describe explicitly how to proceed from one booklink to another. We are inspired by the idea of a stabilization-free version of Markov's Theorem similar to the one for braids.

\begin{ques}
    Given two $(d,n)$-representatives of booklinks representing the same link, what is the minimum set of moves required to pass from one to another? Given booklinks with different braid and/or bridge indices, what is the minimum set of moves required?
\end{ques}

A more immediate question is suggested by the simplicity of the knots listed in Table~\ref{table:knot_spectra}: All of them attain the minimum spectra allowed by the results above; however, this is not true in general for links, as their spectra may contain more information than just the classical bridge and braid indices.

\begin{figure}
\begin{subfigure}{0.45\textwidth}\centering
	\includegraphics[scale=0.5]{./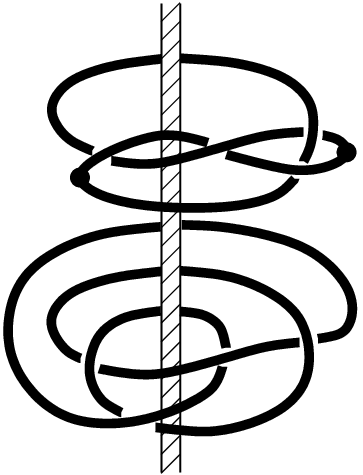}
	\end{subfigure}
\begin{subfigure}{0.45\textwidth}\centering
	\includegraphics[scale=0.5]{./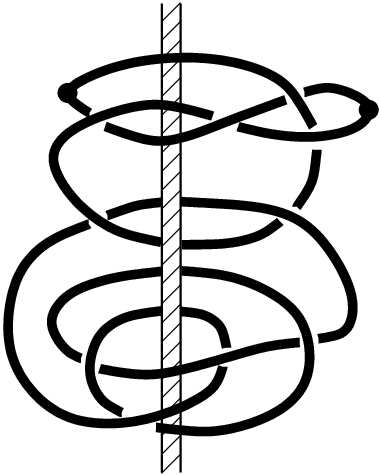}
	\end{subfigure}
    \caption{Two links built from $4_1$ with spectrum $\{6,4,2,1,0\}$.}\label{fig:notboring}
\end{figure}

\begin{ex}\label{ex:notboring}
See Figure~\ref{fig:notboring}. Corollary~\ref{cor:splitspectrum} tells us the split link with two copies of the figure 8 has spectrum $\{6,4,2,1,0\}$. Similarly, two singly-linked copies of the figure 8 clearly can achieve the same spectrum by Corollary~\ref{cor:splitspectrum} and the above $(1,4)$-representative. 
\end{ex}

There is no reason to believe that knots with similar spectra do not also exist; however, they appear to occur at much higher crossing number. In fact, we see no reason not to suppose that Lemma~\ref{lem:monotonic}, Proposition~\ref{prop:endings}, and Conjecture~\ref{conj:concavity} are both necessary and sufficient conditions for a sequence to be the spectrum of some knot.

\begin{conj}
    For any sequence of decreasing integers terminating in $\{\cdots, 1, 0\}$, there is a knot with this sequence as its bridge-braid spectrum.
\end{conj}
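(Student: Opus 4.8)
The plan is to realize the prescribed spectra as connected sums of a controlled library of prime building-block knots, exploiting the fact that the composite formula of Corollary~\ref{cor:compositespectrum} is, after a shift by one, the infimal convolution of the component spectra. Fix a target $n_0 > n_1 > \cdots > n_{\mathfrak d-2} > n_{\mathfrak d-1} = 1 > 0$, so that the knot we seek has bridge index $\mathfrak d$ and braid index $n_0$. First, I would take care of the tail $\ldots,1,0$ and of raising the bridge index by connected-summing with trefoils: each trefoil (spectrum $\{2,1,0\}$) increases the bridge index by one while contributing only the convex tail, and Proposition~\ref{prop:endings} then pins down $\bbb{\mathfrak d-1}=1$ and $\bbb{\mathfrak d}=0$ for free. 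Next, the braid index $\bbb{0}=n_0$ and the interior values $n_1,\dots,n_{\mathfrak d-2}$ must be engineered by the choice, and number, of the remaining summands. Finally I would verify, by an infimal-convolution bookkeeping argument, that the spectrum of the assembled sum attains each $n_i$ exactly and is not undercut by a cross term, using Lemma~\ref{lem:monotonic} to control the intermediate slopes and the page-alignment trick from the proof of Corollary~\ref{cor:compositespectrum} to keep the convolution from dropping below the target.

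The crux of the library question is what is actually available. Corollary~\ref{cor:2-bridge} supplies, for every $\mathfrak n \ge 2$, a knot with spectrum $(\mathfrak n,1,0)$, and Corollary~\ref{cor:bb} gives the torus knots $T(k,k+1)$ with spectrum $\{k,k-1,\dots,1,0\}$; but a direct computation with Corollary~\ref{cor:compositespectrum} shows that connected sums drawn only from these produce spectra of the rigid shape $(N,\,m,\,m-1,\,\dots,\,1,\,0)$ — once the bridge budget in the convolution exceeds what the summands need at their bridge maxima, every interior value collapses to its smallest admissible value, leaving only $\bbb{0}$ free. To move beyond this I would have to enlarge the library with prime knots carrying prescribed \emph{interior} spectral values — concretely, for each pair $1<s<\mathfrak n$, a prime knot with spectrum $(\mathfrak n,s,1,0)$, and more elaborate analogues for longer spectra. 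Such knots do not appear in Table~\ref{table:knot_spectra} (through nine crossings every three-bridge knot has $\bbb 1 = 2$), so the concrete sub-task is to locate candidate families of high bridge index — pretzel, Montesinos, or twisted torus knots — and compute their spectra, either directly via the braid-foliation machinery of Section~\ref{sec:split} or, when the candidates are themselves composite or satellite, via Corollary~\ref{cor:compositespectrum}.

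The main obstacle, however, is of a different character than the bookkeeping. Infimal convolution preserves convexity of integer sequences, and connected sums act by infimal convolution, so any knot assembled from convex-spectrum pieces has a convex spectrum — equivalently, one that is ``concave upwards'' in the sense of Conjecture~\ref{conj:concavity}. To realize a target whose successive drops are not monotone, say with $n_{i-1}-n_i < n_i - n_{i+1}$ for some $i$, the construction would be forced to use a \emph{prime} building block whose own spectrum is already non-convex; no such knot is presently known, and its existence would refute Conjecture~\ref{conj:concavity}. Thus the statement as literally worded is inseparable from the status of Conjecture~\ref{conj:concavity}: the honest order of business is first to settle that dichotomy. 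If Conjecture~\ref{conj:concavity} holds, the present statement is false as stated and should be read with a concavity hypothesis appended, in which case the library-plus-convolution program above — once a rich enough convex-spectrum library is in hand — is the natural line of attack. If instead Conjecture~\ref{conj:concavity} fails, the knot witnessing its failure is precisely the building block the construction was missing, and the remaining work is to propagate its non-convexity, together with trefoil and two-bridge summands adjusting the tail and the braid index, across every non-convex target.
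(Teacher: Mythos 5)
You are attempting to prove a statement that the paper itself leaves as an open conjecture: there is no proof of it in the paper, only the surrounding discussion in Section~\ref{sec:futurework}, so there is nothing for your argument to be measured against except its own completeness --- and as a proof it is not complete. Your plan (realize a target spectrum as a connected sum, using Corollary~\ref{cor:compositespectrum} as a shifted min-convolution, with two-bridge knots and BB-knots as the library) founders exactly where you say it does, and that failure point is not a technical loose end but the entire content of the conjecture. The library supplied by Corollaries~\ref{cor:2-bridge} and~\ref{cor:bb} and Table~\ref{table:knot_spectra} only contains knots whose interior spectral values are forced to the minimum allowed by Lemma~\ref{lem:monotonic} and Proposition~\ref{prop:endings}; your own convolution bookkeeping shows (correctly) that sums of such pieces have spectra of a rigid shape, so already a convex target such as $\{5,3,1,0\}$ requires a \emph{prime} knot with a prescribed non-minimal interior value, e.g.\ spectrum $(\mathfrak n,s,1,0)$ with $2<s<\mathfrak n$. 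No such knot is exhibited in the paper (the paper only offers Example~\ref{ex:notboring}, which is a link, and remarks that knot examples ``appear to occur at much higher crossing number''), and your proposal does not construct one; it defers to ``locate candidate families and compute their spectra,'' which is a restatement of the problem, not a step toward its solution.

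Your second observation --- that the composite and split formulas preserve convexity, so non-convex targets would require a prime block whose existence would refute Conjecture~\ref{conj:concavity}, making the literal statement incompatible with that conjecture --- is sound and is in fact consistent with the authors' intent: the sentence immediately preceding the conjecture says they expect Lemma~\ref{lem:monotonic}, Proposition~\ref{prop:endings}, \emph{and} Conjecture~\ref{conj:concavity} to be necessary and sufficient, so the formal statement is best read with the concavity hypothesis appended. That is a legitimate and useful criticism of how the conjecture is worded, but it is commentary on the statement rather than progress on proving it. In short: no gap analysis can be run against a paper proof here because none exists; what you have written is an honest reduction of the conjecture to (i) fixing its hypotheses relative to Conjecture~\ref{conj:concavity} and (ii) producing prime knots with prescribed interior spectra, with (ii) left entirely open.
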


We could likely extend this to determine the number of components:

\begin{conj}
    Any sequence $\{\mathfrak{n}, \cdots, 1, 0\}$ satisfying a set of reasonable conditions is the spectrum for an $n$-component link with this spectrum.
\end{conj}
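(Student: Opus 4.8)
The plan is to prove a realizability statement: for any finite strictly decreasing sequence of positive integers ending $\{\mathfrak n, \dots, 1, 0\}$, together with a prescribed number of components $n$ (subject to compatibility conditions: $\mathfrak n \ge n$, and the length-of-sequence-minus-one, which is the bridge index, must be at least $n$, since every component contributes at least one bridge and at least one braid strand), we want to build an $n$-component link whose bridge-braid spectrum is exactly that sequence. The natural strategy is a building-block-and-splice argument: start from known spectra of simple links, use the split and composite formulas (Corollaries~\ref{cor:splitspectrum} and~\ref{cor:compositespectrum}) to combine them, and use Lemma~\ref{lem:monotonic} and Proposition~\ref{prop:endings} together with the stabilization moves of Figure~\ref{fig:stable} to tune the head $\bbb 0 = \mathfrak n$ of the sequence. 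The ``reasonable conditions'' to be stated explicitly are: (i) the sequence is strictly decreasing to $0$; (ii) the second-to-last entry is $1$ (forced by Proposition~\ref{prop:endings}); (iii) $\mathfrak n \ge$ (number of components); (iv) $\mathfrak d := (\text{length}) - 1 \ge$ (number of components).

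First I would handle the one-component (knot) case as a warm-up, since it clarifies the machinery even though the excerpt already conjectures it separately. Given a target sequence $\{\mathfrak n, s_1, \dots, s_{\mathfrak d - 1} = 1, 0\}$, I would try to realize it as a connected sum of $2$-bridge knots and one ``flexible'' summand, exploiting Corollary~\ref{cor:compositespectrum}: the composite spectrum at intermediate $d$ is $\min_{d_1 + d_2 = d+1}[\bb{d_1}{L_1} + \bb{d_2}{L_2}]$, which lets us add the spectra of summands with a shift. A $2$-bridge knot contributes spectrum $(\mathfrak n_i, 1, 0)$ (Corollary~\ref{cor:2-bridge}), and torus knots $T(2, 2k+1)$ give $\mathfrak n_i = 2$; more generally $T(p,q)$ gives braid index $\min(p,q)$ but bridge index also $\min(p,q)$ (these are BB-knots, Corollary~\ref{cor:bb}, spectrum $\{\mathfrak d, \mathfrak d -1, \dots, 1, 0\}$ — a ``maximally steep'' segment). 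The idea is that connect-summing a $T(p,q)$ torus knot inserts a run of consecutive integers differing by $1$, while the freedom to choose each summand's braid index (by adding crossings inside the summand without changing bridge index, or vice versa by Lemma~\ref{lem:monotonic}'s inverse direction realized via stabilization in Figure~\ref{fig:stable}) lets us control the size of each drop. The key computation is to check that with enough summands of the right type, the $\min$ in the composite formula actually produces the prescribed values $s_i$ and not something smaller; this requires each summand's spectrum to be ``convex enough'' that no cross-term beats the intended pairing.

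For the $n$-component case I would bootstrap from the knot case: take an $(n-1)$-fold split union of unknots (each with spectrum $\{1, 0\}$) with a single knot $K$ realizing a modified target, and apply Corollary~\ref{cor:splitspectrum}, $\bb d L = \min_{d_1 + \dots + d_n = d}\sum \bb{d_i}{L_i}$. Since each unknot's spectrum is $\{1,0\}$, split union with $m$ unknots shifts the knot's spectrum up by $m$ in braid index and right by $m$ in bridge index in a controlled way; then I would re-tune $K$'s spectrum so the total lands on the target. Alternatively, to get a non-split $n$-component link one could chain the components as in Example~\ref{ex:notboring}, where singly-linked copies still obey the split formula's bound with equality because a $(1, n_i + \cdots)$-representative is exhibited; this suggests ``linked chains'' are as flexible as split unions for spectrum purposes, which I would verify. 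The hardest part will be the optimality check — showing the constructed link's spectrum is not \emph{lower} than the target at some intermediate $d$. The upper bound (a representative exists) comes free from the construction, but the lower bound requires that any booklink representative with bridge index $d$ needs at least $s_d$ braid strands; for the composite/split pieces this follows from Corollaries~\ref{cor:splitspectrum} and~\ref{cor:compositespectrum} being equalities, but one must ensure the combining operation does not accidentally create a shortcut, e.g.\ a representative that is ``diagonally efficient'' across summands. I expect this to reduce to a careful bookkeeping lemma about convexity of the summand spectra (related to Conjecture~\ref{conj:concavity}), and proving that lemma — or finding summands for which it is automatic — is the main obstacle.
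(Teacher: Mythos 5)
You should first note that the paper does not prove this statement at all: it is posed as an open conjecture in the Future Work section, immediately after the (also open) knot-realization conjecture, so there is no argument of the authors' to compare yours against. Judged on its own terms, your plan has a structural gap, not just missing bookkeeping. Everything you build is obtained by applying Corollaries~\ref{cor:splitspectrum} and~\ref{cor:compositespectrum} to blocks whose spectra are actually known --- $\{m,1,0\}$ (2-bridge), $\{\mathfrak{d},\mathfrak{d}-1,\ldots,1,0\}$ (BB/torus), and the Table~\ref{table:knot_spectra} knots $\{m,2,1,0\}$ --- and every one of these has at most one ``drop'' $\bbb{d-1}-\bbb{d}$ larger than $1$. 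Both corollaries are min-plus convolutions on such (concave-upward) spectra: the split formula merges the drop multisets of the summands, and the composite formula merges them and then fuses the two largest drops into a single drop (their sum minus one). For instance, the connected sum of two 2-bridge knots of braid indices $m_1,m_2$ has spectrum $\{m_1+m_2-1,\,2,\,1,\,0\}$ --- the middle value is always $2$, never adjustable --- and by induction any connected sum of the available blocks again has at most one drop exceeding $1$. So the knot-case step of your plan cannot reach even $\{7,4,2,1,0\}$, and in the split-union step each drop exceeding $1$ must be carried by its own 2-bridge (or Table~\ref{table:knot_spectra}) component, forcing the component count to be at least the number of such drops and the bridge index to be at least that number plus the number of components; a target such as $\{8,5,3,1,0\}$ (drops $3,2,2,1$), which satisfies every condition you list, is therefore unreachable by this calculus for any number of components. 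The missing ingredient is precisely the existence of prime knots or links whose own spectra contain several large drops, which is the content of the preceding conjecture and exactly what the paper could not exhibit through 9 crossings; for the general case your argument is circular.

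Two smaller points. The ``optimality check'' you single out as the main obstacle is actually the easy part: Corollaries~\ref{cor:splitspectrum} and~\ref{cor:compositespectrum} are stated as equalities for split and composite links, so a split union or connected sum has its spectrum determined outright, with no danger of a ``diagonally efficient'' representative; only your non-split chained variant needs an extra lower-bound argument, as in Example~\ref{ex:notboring}. Conversely, because min-plus convolution of concave-upward spectra is again concave-upward, your construction can never reach a non-concave-upward target, so unless the ``reasonable conditions'' are taken to include the concavity of Conjecture~\ref{conj:concavity} (itself unproven, and notably absent from the paper's knot-realization conjecture), the method is categorically insufficient rather than merely incomplete. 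Also, ``adding crossings inside a summand'' or stabilizing as in Lemma~\ref{lem:monotonic} does not let you tune a fixed summand's spectrum --- the spectrum is a minimum over representatives, so it can only be changed by changing the link type of the summand, which returns you to the realization problem you are trying to solve.
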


\begin{table}
\begin{tabular}{lllllllllll}
\cmidrule[\heavyrulewidth]{1-5}\cmidrule[\heavyrulewidth]{7-11}
& $\mathfrak{d}$ & $\mathfrak{n}$ & Spectrum & Reference &&  & $\mathfrak{d}$ & $\mathfrak{n}$ & Spectrum & Reference\\
\cmidrule{1-5}\cmidrule{7-11}
$0_1$ & 1 & 1 & $\{1,0\}$ & BB & \qquad\qquad & $9_8$ & 2 & 5 & $\{5,1,0\}$ & 2-bridge \\
$3_1$ & 2 & 2 & $\{2,1,0\}$ & 2-bridge & & $9_9$ & 2 & 3 & $\{3,1,0\}$ & 2-bridge \\
$4_1$ & 2 & 3 & $\{3,1,0\}$ & 2-bridge & & $9_{10}$ & 2 & 4 & $\{4,1,0\}$ & 2-bridge \\
$5_1$ & 2 & 2 & $\{2,1,0\}$ & 2-bridge & & $9_{11}$ & 2 & 4 & $\{4,1,0\}$ & 2-bridge \\
$5_2$ & 2 & 3 & $\{3,1,0\}$ & 2-bridge & & $9_{12}$ & 2 & 5 & $\{5,1,0\}$ & 2-bridge \\
$6_1$ & 2 & 4 & $\{4,1,0\}$ & 2-bridge & & $9_{13}$ & 2 & 4 & $\{4,1,0\}$ & 2-bridge \\
$6_2$ & 2 & 3 & $\{3,1,0\}$ & 2-bridge & & $9_{14}$ & 2 & 5 & $\{5,1,0\}$ & 2-bridge \\
$6_3$ & 2 & 3 & $\{3,1,0\}$ & 2-bridge & & $9_{15}$ & 2 & 5 & $\{5,1,0\}$ & 2-bridge \\
$7_1$ & 2 & 2 & $\{2,1,0\}$ & 2-bridge & & $9_{16}$ & 3 & 3 & $\{3,2,1,0\}$ & BB \\
$7_2$ & 2 & 4 & $\{4,1,0\}$ & 2-bridge & & $9_{17}$ & 2 & 4 & $\{4,1,0\}$ & 2-bridge \\
$7_3$ & 2 & 3 & $\{3,1,0\}$ & 2-bridge & & $9_{18}$ & 2 & 4 & $\{4,1,0\}$ & 2-bridge \\
$7_4$ & 2 & 4 & $\{4,1,0\}$ & 2-bridge & & $9_{19}$ & 2 & 5 & $\{5,1,0\}$ & 2-bridge \\
$7_5$ & 2 & 3 & $\{3,1,0\}$ & 2-bridge & & $9_{20}$ & 2 & 4 & $\{4,1,0\}$ & 2-bridge \\
$7_6$ & 2 & 4 & $\{4,1,0\}$ & 2-bridge & & $9_{21}$ & 2 & 5 & $\{5,1,0\}$ & 2-bridge \\
$7_7$ & 2 & 4 & $\{4,1,0\}$ & 2-bridge & & $9_{22}$ & 3 & 4 & $\{4,2,1,0\}$ & Fig.~\ref{fig:missing} \\
$8_1$ & 2 & 5 & $\{5,1,0\}$ & 2-bridge & & $9_{23}$ & 2 & 4 & $\{4,1,0\}$ & 2-bridge \\
$8_2$ & 2 & 3 & $\{3,1,0\}$ & 2-bridge & & $9_{24}$ & 3 & 4 & $\{4,2,1,0\}$ & Fig.~\ref{fig:missing} \\
$8_3$ & 2 & 5 & $\{5,1,0\}$ & 2-bridge & & $9_{25}$ & 3 & 5 & $\{5,2,1,0\}$ & Fig.~\ref{fig:missing} \\
$8_4$ & 2 & 4 & $\{4,1,0\}$ & 2-bridge & & $9_{26}$ & 2 & 4 & $\{4,1,0\}$ & 2-bridge \\
$8_5$ & 3 & 3 & $\{3,2,1,0\}$ & BB & & $9_{27}$ & 2 & 4 & $\{4,1,0\}$ & 2-bridge \\
$8_6$ & 2 & 4 & $\{4,1,0\}$ & 2-bridge & & $9_{28}$ & 3 & 4 & $\{4,2,1,0\}$ & Fig.~\ref{fig:missing} \\
$8_7$ & 2 & 3 & $\{3,1,0\}$ & 2-bridge & & $9_{29}$ & 3 & 4 & $\{4,2,1,0\}$ & Fig.~\ref{fig:missing} \\
$8_8$ & 2 & 4 & $\{4,1,0\}$ & 2-bridge & & $9_{30}$ & 3 & 4 & $\{4,2,1,0\}$ & Fig.~\ref{fig:missing} \\
$8_9$ & 2 & 3 & $\{3,1,0\}$ & 2-bridge & & $9_{31}$ & 2 & 4 & $\{4,1,0\}$ & 2-bridge \\
$8_{10}$ & 3 & 3 & $\{3,2,1,0\}$ & BB & & $9_{32}$ & 3 & 4 & $\{4,2,1,0\}$ & Fig.~\ref{fig:missing} \\
$8_{11}$ & 2 & 4 & $\{4,1,0\}$ & 2-bridge & & $9_{33}$ & 3 & 4 & $\{4,2,1,0\}$ & Fig.~\ref{fig:missing} \\
$8_{12}$ & 2 & 5 & $\{5,1,0\}$ & 2-bridge & & $9_{34}$ & 3 & 4 & $\{4,2,1,0\}$ & Fig.~\ref{fig:missing} \\
$8_{13}$ & 2 & 4 & $\{4,1,0\}$ & 2-bridge & & $9_{35}$ & 3 & 5 & $\{5,2,1,0\}$ & Fig.~\ref{fig:missing} \\
$8_{14}$ & 2 & 4 & $\{4,1,0\}$ & 2-bridge & & $9_{36}$ & 3 & 4 & $\{4,2,1,0\}$ & Fig.~\ref{fig:missing} \\
$8_{15}$ & 3 & 4 & $\{4,2,1,0\}$ & Fig.~\ref{fig:missing} & & $9_{37}$ & 3 & 5 & $\{5,2,1,0\}$ & Fig.~\ref{fig:missing} \\
$8_{16}$ & 3 & 3 & $\{3,2,1,0\}$ & BB & & $9_{38}$ & 3 & 4 & $\{4,2,1,0\}$ & Fig.~\ref{fig:missing} \\
$8_{17}$ & 3 & 3 & $\{3,2,1,0\}$ & BB & & $9_{39}$ & 3 & 5 & $\{5,2,1,0\}$ & Fig.~\ref{fig:missing} \\
$8_{18}$ & 3 & 3 & $\{3,2,1,0\}$ & BB & & $9_{40}$ & 3 & 4 & $\{4,2,1,0\}$ & Fig.~\ref{fig:missing} \\
$8_{19}$ & 3 & 3 & $\{3,2,1,0\}$ & BB & & $9_{41}$ & 3 & 5 & $\{5,2,1,0\}$ & Fig.~\ref{fig:missing} \\
$8_{20}$ & 3 & 3 & $\{3,2,1,0\}$ & BB & & $9_{42}$ & 3 & 4 & $\{4,2,1,0\}$ & Fig.~\ref{fig:missing} \\
$8_{21}$ & 3 & 3 & $\{3,2,1,0\}$ & BB & & $9_{43}$ & 3 & 4 & $\{4,2,1,0\}$ & Fig.~\ref{fig:missing} \\
$9_1$ & 2 & 2 & $\{2,1,0\}$ & 2-bridge & & $9_{44}$ & 3 & 4 & $\{4,2,1,0\}$ & Fig.~\ref{fig:missing} \\
$9_2$ & 2 & 5 & $\{5,1,0\}$ & 2-bridge & & $9_{45}$ & 3 & 4 & $\{4,2,1,0\}$ & Fig.~\ref{fig:missing} \\
$9_3$ & 2 & 3 & $\{3,1,0\}$ & 2-bridge & & $9_{46}$ & 3 & 4 & $\{4,2,1,0\}$ & Fig.~\ref{fig:missing} \\
$9_4$ & 2 & 4 & $\{4,1,0\}$ & 2-bridge & & $9_{47}$ & 3 & 4 & $\{4,2,1,0\}$ & Fig.~\ref{fig:missing} \\
$9_5$ & 2 & 5 & $\{5,1,0\}$ & 2-bridge & & $9_{48}$ & 3 & 4 & $\{4,2,1,0\}$ & Fig.~\ref{fig:missing} \\
$9_6$ & 2 & 3 & $\{3,1,0\}$ & 2-bridge & & $9_{49}$ & 3 & 4 & $\{4,2,1,0\}$ & Fig.~\ref{fig:missing} \\
$9_7$ & 2 & 4 & $\{4,1,0\}$ & 2-bridge\\
\cmidrule[\heavyrulewidth]{1-5}\cmidrule[\heavyrulewidth]{7-11}
\end{tabular}
\caption{Knot spectra, with classical bridge $\mathfrak{d}$ and braid $\mathfrak{n}$ indices.}\label{table:knot_spectra}
\end{table}

\begin{figure}[b]
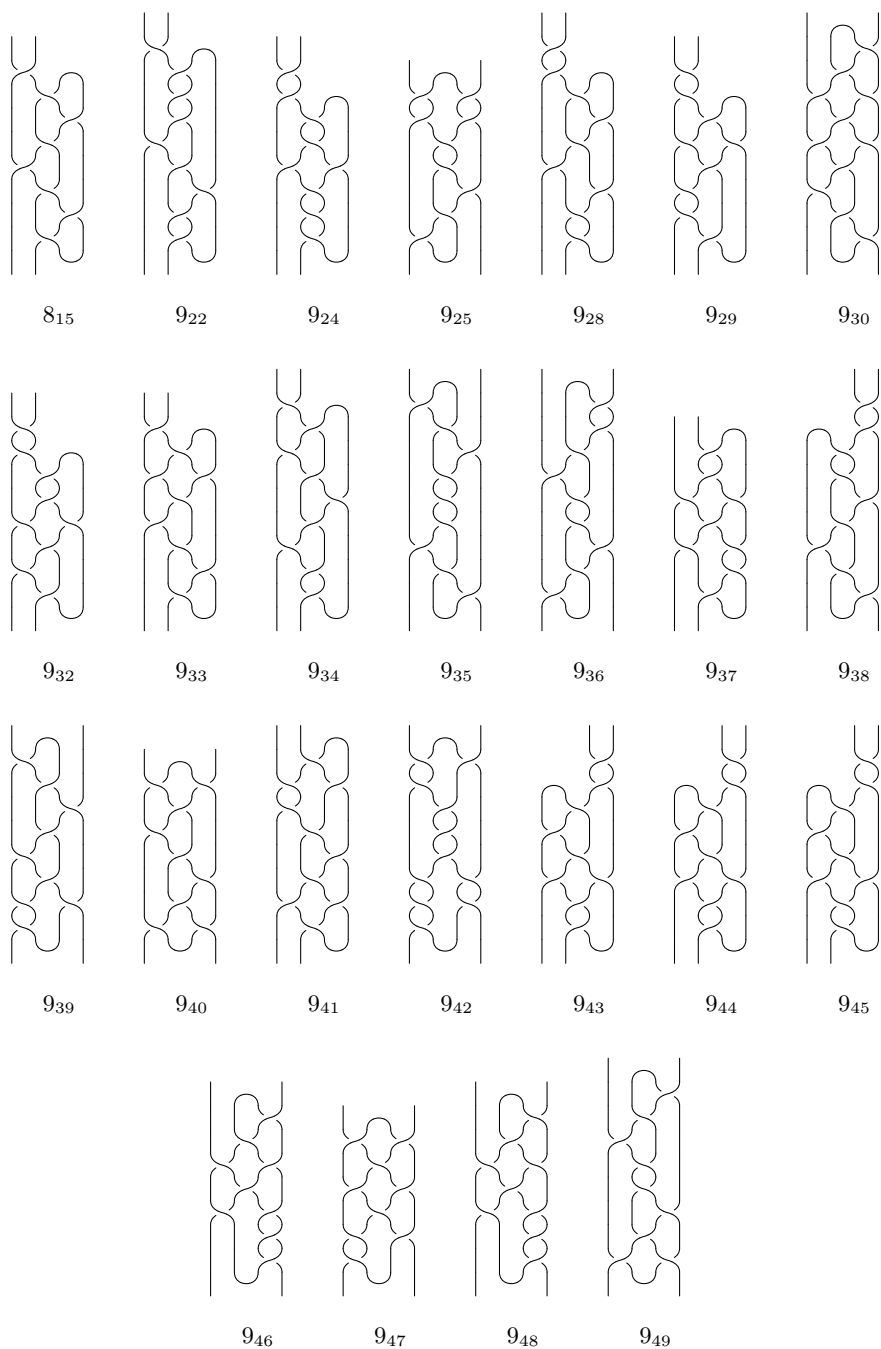
\centering
\begin{subfigure}{0.13\textwidth}\[\xygraph{ !{/r0.75pc/:}
    !{\xcapv@(0)} [ur]!{\xcapv@(0)}
    [l]!{\vtwist}[r][r]!{\vcap}[r]
    [lll]!{\xcapv@(0)} [ur]!{\vcross}[r][ur]!{\xcapv@(0)}
    [lll]!{\xcapv@(0)} [ur]!{\xcapv@(0)}[ur]!{\vtwist}[r]
    [lll]!{\xcapv@(0)} [ur]!{\vcross}[r][ur]!{\xcapv@(0)}
    [lll]!{\vtwist}[r][ur]!{\xcapv@(0)}[ur]!{\xcapv@(0)}
    [lll]!{\xcapv@(0)} [ur]!{\vcross}[r][ur]!{\xcapv@(0)}
    [lll]!{\xcapv@(0)} [ur]!{\xcapv@(0)}[ur]!{\vtwist}[r]
    [lll]!{\xcapv@(0)} [ur]!{\vcross}[r][ur]!{\xcapv@(0)}
    [lll]!{\xcapv@(0)} [ur]!{\xcapv@(0)}[ur]!{\vcap-}[dr]
}\]\caption*{$8_{15}$}\end{subfigure}
\begin{subfigure}{0.13\textwidth}\[\xygraph{ !{/r0.75pc/:}
    !{\xcapv@(0)}[ur]!{\xcapv@(0)}[ur][d]
    [ll]!{\vcross}[rr]!{\vcap}[r]
    [lll]!{\xcapv@(0)} [ur]!{\vtwist}[r][ur]!{\xcapv@(0)}
    [lll]!{\xcapv@(0)} [ur]!{\vtwist}[r][ur]!{\xcapv@(0)}
    [lll]!{\xcapv@(0)} [ur]!{\vtwist}[r][ur]!{\xcapv@(0)}
    [lll]!{\vcross}[r][ur]!{\xcapv@(0)}[ur]!{\xcapv@(0)}
    [lll]!{\xcapv@(0)} [ur]!{\vtwist}[r][ur]!{\xcapv@(0)}
    [lll]!{\xcapv@(0)} [ur]!{\xcapv@(0)}[ur]!{\vcross}[r]
    [lll]!{\xcapv@(0)} [ur]!{\vtwist}[r][ur]!{\xcapv@(0)}
    [lll]!{\xcapv@(0)} [ur]!{\vtwist}[r][ur]!{\xcapv@(0)}
    [lll]!{\xcapv@(0)} [ur]!{\xcapv@(0)} [ur]!{\vcap-}[dr]
}\]\caption*{$9_{22}$}\end{subfigure}
\begin{subfigure}{0.13\textwidth}\[\xygraph{ !{/r0.75pc/:}
    !{\xcapv@(0)} [ur]!{\xcapv@(0)}
    [l]!{\vtwist}
    !{\vtwist}[rr]!{\vcap}[r]
    [lll]!{\xcapv@(0)} [ur]!{\vcross}[r][ur]!{\xcapv@(0)}
    [lll]!{\xcapv@(0)} [ur]!{\vcross}[r][ur]!{\xcapv@(0)}
    [lll]!{\vtwist}[r][ur]!{\vtwist}[r]
    [lll]!{\xcapv@(0)} [ur]!{\vcross}[r][ur]!{\xcapv@(0)}
    [lll]!{\xcapv@(0)} [ur]!{\vcross}[r][ur]!{\xcapv@(0)}
    [lll]!{\xcapv@(0)} [ur]!{\vcross}[r][ur]!{\xcapv@(0)}
    [lll]!{\xcapv@(0)} [ur]!{\xcapv@(0)} [ur]!{\vcap-}[dr]
}\]\caption*{$9_{24}$}\end{subfigure}
\begin{subfigure}{0.13\textwidth}\[\xygraph{ !{/r0.75pc/:}
    !{\xcapv@(0)}[ur][d]!{\vcap}[r][ur]!{\xcapv@(0)}
    [lll]!{\vtwist}[r][ur]!{\vtwist}[r]
    [lll]!{\vtwist}[r][ur]!{\vtwist}[r]
    [lll]!{\xcapv@(0)} [ur]!{\vcross}[r][ur]!{\xcapv@(0)}
    [lll]!{\xcapv@(0)} [ur]!{\vcross}[r][ur]!{\xcapv@(0)}
    [lll]!{\xcapv@(0)} [ur]!{\xcapv@(0)}[ur]!{\vtwist}[r]
    [lll]!{\xcapv@(0)} [ur]!{\vcross}[r][ur]!{\xcapv@(0)}
    [lll]!{\vtwist}[r][ur]!{\xcapv@(0)}[ur]!{\xcapv@(0)}
    [lll]!{\xcapv@(0)} [ur]!{\vcap-}[dr][ur]!{\xcapv@(0)}
}\]\caption*{$9_{25}$}\end{subfigure}
\begin{subfigure}{0.13\textwidth}\[\xygraph{ !{/r0.75pc/:}
    !{\xcapv@(0)}[ur]!{\xcapv@(0)}
    [l]!{\vtwist}
    !{\vtwist}[rr]!{\vcap}[r]
    [lll]!{\xcapv@(0)} [ur]!{\vcross}[r][ur]!{\xcapv@(0)}
    [lll]!{\xcapv@(0)} [ur]!{\xcapv@(0)}[ur]!{\vtwist}[r]
    [lll]!{\xcapv@(0)} [ur]!{\vcross}[r][ur]!{\xcapv@(0)}
    [lll]!{\vtwist}[r][ur]!{\xcapv@(0)}[ur]!{\xcapv@(0)}
    [lll]!{\xcapv@(0)} [ur]!{\xcapv@(0)}[ur]!{\vtwist}[r]
    [lll]!{\xcapv@(0)} [ur]!{\vcross}[r][ur]!{\xcapv@(0)}
    [lll]!{\xcapv@(0)} [ur]!{\vcross}[r][ur]!{\xcapv@(0)}
    [lll]!{\xcapv@(0)}[ur]!{\xcapv@(0)}[ur]!{\vcap-}[dr]
}\]\caption*{$9_{28}$}\end{subfigure}
\begin{subfigure}{0.13\textwidth}\[\xygraph{ !{/r0.75pc/:}
    !{\xcapv@(0)}[ur]!{\xcapv@(0)}
    [l]!{\vcross}[r][ur][d]
    [ll]!{\vcross}[r][r]!{\vcap}[r]
    [lll]!{\xcapv@(0)} [ur]!{\vtwist}[r][ur]!{\xcapv@(0)}
    [lll]!{\vcross}[r][ur]!{\vcross}[r]
    [lll]!{\xcapv@(0)} [ur]!{\vtwist}[r][ur]!{\xcapv@(0)}
    [lll]!{\vcross}[r][ur]!{\xcapv@(0)}[ur]!{\xcapv@(0)}
    [lll]!{\vcross}[r][ur]!{\xcapv@(0)}[ur]!{\xcapv@(0)}
    [lll]!{\xcapv@(0)} [ur]!{\vtwist}[r][ur]!{\xcapv@(0)}
    [lll]!{\xcapv@(0)} [ur]!{\xcapv@(0)}[ur]!{\vcap-}[dr]
}\]\caption*{$9_{29}$}\end{subfigure}
\begin{subfigure}{0.13\textwidth}\[\xygraph{ !{/r0.75pc/:}
    !{\xcapv@(0)}[ur][d]!{\vcap}[r][ur]!{\xcapv@(0)}
    [lll]!{\xcapv@(0)} [ur]!{\xcapv@(0)}[ur]!{\vcross}[r]
    [lll]!{\xcapv@(0)} [ur]!{\vtwist}[r][ur]!{\xcapv@(0)}
    [lll]!{\vtwist}[r][ur]!{\vcross}[r]
    [lll]!{\xcapv@(0)} [ur]!{\vtwist}[r][ur]!{\xcapv@(0)}
    [lll]!{\vtwist}[r][ur]!{\vcross}[r]
    [lll]!{\xcapv@(0)} [ur]!{\vtwist}[r][ur]!{\xcapv@(0)}
    [lll]!{\vcross}[r][ur]!{\xcapv@(0)}[ur]!{\xcapv@(0)}
    [lll]!{\xcapv@(0)} [ur]!{\vtwist}[r][ur]!{\xcapv@(0)}
    [lll]!{\xcapv@(0)} [ur]!{\xcapv@(0)}[ur]!{\vcross}[r]
    [lll]!{\xcapv@(0)} [ur]!{\vcap-}[dr][ur]!{\xcapv@(0)}
}\]\caption*{$9_{30}$}\end{subfigure}
\begin{subfigure}{0.13\textwidth}\[\xygraph{ !{/r0.75pc/:}
    !{\xcapv@(0)}[ur]!{\xcapv@(0)}
    [l]!{\vcross}[r]
    [l]!{\vcross}[r][r]!{\vcap}[r]
    [lll]!{\xcapv@(0)} [ur]!{\vtwist}[r][ur]!{\xcapv@(0)}
    [lll]!{\xcapv@(0)} [ur]!{\vtwist}[r][ur]!{\xcapv@(0)}
    [lll]!{\vcross}[r][ur]!{\vcross}[r]
    [lll]!{\xcapv@(0)} [ur]!{\vtwist}[r][ur]!{\xcapv@(0)}
    [lll]!{\vcross}[r][ur]!{\xcapv@(0)}[ur]!{\xcapv@(0)}
    [lll]!{\xcapv@(0)} [ur]!{\vtwist}[r][ur]!{\xcapv@(0)}
    [lll]!{\xcapv@(0)} [ur]!{\xcapv@(0)}[ur]!{\vcap-}[dr]
}\]\caption*{$9_{32}$}\end{subfigure}
\begin{subfigure}{0.13\textwidth}\[\xygraph{ !{/r0.75pc/:}
    !{\xcapv@(0)}[ur]!{\xcapv@(0)}
    [l]!{\vtwist}[r][r]!{\vcap}[r]
    [lll]!{\xcapv@(0)} [ur]!{\vcross}[r][ur]!{\xcapv@(0)}
    [lll]!{\vtwist}[r] [ur]!{\vtwist}[r]
    [lll]!{\xcapv@(0)} [ur]!{\vcross}[r][ur]!{\xcapv@(0)}
    [lll]!{\vtwist}[r][ur]!{\xcapv@(0)}[ur]!{\xcapv@(0)}
    [lll]!{\xcapv@(0)} [ur]!{\vcross}[r][ur]!{\xcapv@(0)}
    [lll]!{\xcapv@(0)} [ur]!{\xcapv@(0)}[ur]!{\vtwist}[r]
    [lll]!{\xcapv@(0)} [ur]!{\vcross}[r][ur]!{\xcapv@(0)}
    [lll]!{\xcapv@(0)} [ur]!{\xcapv@(0)}[ur]!{\vcap-}[dr]
}\]\caption*{$9_{33}$}\end{subfigure}
\begin{subfigure}{0.13\textwidth}\[\xygraph{ !{/r0.75pc/:}
    !{\xcapv@(0)}[ur]!{\xcapv@(0)}
    [l]!{\vcross}[rr]!{\vcap}[r]
    [lll]!{\xcapv@(0)} [ur]!{\vtwist}[r][ur]!{\xcapv@(0)}
    [lll]!{\vcross}[r][ur]!{\xcapv@(0)}[ur]!{\xcapv@(0)}
    [lll]!{\xcapv@(0)} [ur]!{\vtwist}[r][ur]!{\xcapv@(0)}
    [lll]!{\xcapv@(0)} [ur]!{\xcapv@(0)}[ur]!{\vcross}[r]
    [lll]!{\xcapv@(0)} [ur]!{\vtwist}[r][ur]!{\xcapv@(0)}
    [lll]!{\vcross}[r][ur]!{\xcapv@(0)}[ur]!{\xcapv@(0)}
    [lll]!{\xcapv@(0)} [ur]!{\vtwist}[r][ur]!{\xcapv@(0)}
    [lll]!{\xcapv@(0)} [ur]!{\vtwist}[r][ur]!{\xcapv@(0)}
    [lll]!{\xcapv@(0)} [ur]!{\xcapv@(0)}[ur]!{\vcap-}[dr]
}\]\caption*{$9_{34}$}\end{subfigure}
\begin{subfigure}{0.13\textwidth}\[\xygraph{ !{/r0.75pc/:}
    !{\xcapv@(0)}[ur][d]!{\vcap}[r][ur]!{\xcapv@(0)}
    [lll]!{\vtwist}[r][ur]!{\xcapv@(0)}[ur]!{\xcapv@(0)}
    [lll]!{\xcapv@(0)} [ur]!{\vcross}[r][ur]!{\xcapv@(0)}
    [lll]!{\xcapv@(0)} [ur]!{\xcapv@(0)}[ur]!{\vtwist}[r]
    [lll]!{\xcapv@(0)} [ur]!{\vcross}[r][ur]!{\xcapv@(0)}
    [lll]!{\xcapv@(0)} [ur]!{\vcross}[r][ur]!{\xcapv@(0)}
    [lll]!{\xcapv@(0)} [ur]!{\vcross}[r][ur]!{\xcapv@(0)}
    [lll]!{\vtwist}[r][ur]!{\xcapv@(0)}[ur]!{\xcapv@(0)}
    [lll]!{\xcapv@(0)} [ur]!{\vcross}[r][ur]!{\xcapv@(0)}
    [lll]!{\xcapv@(0)} [ur]!{\xcapv@(0)}[ur]!{\vtwist}[r]
    [lll]!{\xcapv@(0)} [ur]!{\vcap-}[dr][ur]!{\xcapv@(0)}
}\]\caption*{$9_{35}$}\end{subfigure}
\begin{subfigure}{0.13\textwidth}\[\xygraph{ !{/r0.75pc/:}
    !{\xcapv@(0)}[ur][d]!{\vcap}[r][ur]!{\xcapv@(0)}
    [lll]!{\xcapv@(0)} [ur]!{\xcapv@(0)}[ur]!{\vtwist}[r]
    [lll]!{\xcapv@(0)} [ur]!{\xcapv@(0)}[ur]!{\vtwist}[r]
    [lll]!{\xcapv@(0)} [ur]!{\vcross}[r][ur]!{\xcapv@(0)}
    [lll]!{\vtwist}[r][ur]!{\xcapv@(0)}[ur]!{\xcapv@(0)}
    [lll]!{\xcapv@(0)} [ur]!{\vcross}[r][ur]!{\xcapv@(0)}
    [lll]!{\xcapv@(0)} [ur]!{\vcross}[r][ur]!{\xcapv@(0)}
    [lll]!{\xcapv@(0)} [ur]!{\xcapv@(0)}[ur]!{\vtwist}[r]
    [lll]!{\xcapv@(0)} [ur]!{\vcross}[r][ur]!{\xcapv@(0)}
    [lll]!{\vtwist}[r][ur]!{\xcapv@(0)}[ur]!{\xcapv@(0)}
    [lll]!{\xcapv@(0)} [ur]!{\vcap-}[dr][ur]!{\xcapv@(0)}
}\]\caption*{$9_{36}$}\end{subfigure}
\begin{subfigure}{0.13\textwidth}\[\xygraph{ !{/r0.75pc/:}
    !{\xcapv@(0)}[ur]!{\xcapv@(0)}[ur][d]!{\vcap}[r]
    [lll]!{\xcapv@(0)} [ur]!{\vtwist}[r][ur]!{\xcapv@(0)}
    [lll]!{\xcapv@(0)} [ur]!{\vtwist}[r][ur]!{\xcapv@(0)}
    [lll]!{\vcross}[r][ur]!{\vcross}[r]
    [lll]!{\xcapv@(0)} [ur]!{\vtwist}[r][ur]!{\xcapv@(0)}
    [lll]!{\vcross}[r][ur]!{\vcross}[r]
    [lll]!{\xcapv@(0)} [ur]!{\xcapv@(0)}[ur]!{\vcross}[r]
    [lll]!{\xcapv@(0)} [ur]!{\vtwist}[r][ur]!{\xcapv@(0)}
    [lll]!{\xcapv@(0)} [ur]!{\xcapv@(0)}[ur]!{\vcap-}[dr]
}\]\caption*{$9_{37}$}\end{subfigure}
\begin{subfigure}{0.13\textwidth}\[\xygraph{ !{/r0.75pc/:}
    !{\xcapv@(0)}[ur]!{\xcapv@(0)}
    [l]!{\vtwist}[r][d]
    [lll]!{\vcap}[urr]!{\vtwist}[r]
    [lll]!{\xcapv@(0)} [ur]!{\vcross}[r][ur]!{\xcapv@(0)}
    [lll]!{\xcapv@(0)} [ur]!{\vcross}[r][ur]!{\xcapv@(0)}
    [lll]!{\xcapv@(0)} [ur]!{\xcapv@(0)}[ur]!{\vtwist}[r]
    [lll]!{\xcapv@(0)} [ur]!{\vcross}[r][ur]!{\xcapv@(0)}
    [lll]!{\vtwist}[r][ur]!{\xcapv@(0)}[ur]!{\xcapv@(0)}
    [lll]!{\xcapv@(0)} [ur]!{\vcross}[r][ur]!{\xcapv@(0)}
    [lll]!{\xcapv@(0)} [ur]!{\xcapv@(0)}[ur]!{\vtwist}[r]
    [lll]!{\xcapv@(0)} [ur]!{\vcap-}[dr][ur]!{\xcapv@(0)}
}\]\caption*{$9_{38}$}\end{subfigure}
\begin{subfigure}{0.13\textwidth}\[\xygraph{ !{/r0.75pc/:}
    !{\xcapv@(0)}[r]!{\vcap}[ur][r]!{\xcapv@(0)}
    [lll]!{\vcross}[r][ur]!{\xcapv@(0)}[ur]!{\xcapv@(0)}
    [lll]!{\xcapv@(0)}[ur]!{\vtwist}[r][ur]!{\xcapv@(0)}
    [lll]!{\xcapv@(0)}[ur]!{\xcapv@(0)}[ur]!{\vcross}[r]
    [lll]!{\xcapv@(0)}[ur]!{\vtwist}[r][ur]!{\xcapv@(0)}
    [lll]!{\vcross}[r][ur]!{\xcapv@(0)}[ur]!{\xcapv@(0)}
    [lll]!{\xcapv@(0)}[ur]!{\vtwist}[r][ur]!{\xcapv@(0)}
    [lll]!{\vcross}[r][ur]!{\vcross}[r]
    [lll]!{\vcross}[r][ur]!{\xcapv@(0)}[ur]!{\xcapv@(0)}
    [lll]!{\xcapv@(0)}[ur]!{\vcap-}[r][r]!{\xcapv@(0)}
}\]\caption*{$9_{39}$}\end{subfigure}
\begin{subfigure}{0.13\textwidth}\[\xygraph{ !{/r0.75pc/:}
    !{\xcapv@(0)}[ur][d]!{\vcap}[ur][r]!{\xcapv@(0)}
    [lll]!{\vcross}[r][ur]!{\vcross}[r]
    [lll]!{\xcapv@(0)} [ur]!{\vtwist}[r][ur]!{\xcapv@(0)}
    [lll]!{\vcross}[r][ur]!{\xcapv@(0)}[ur]!{\xcapv@(0)}
    [lll]!{\xcapv@(0)} [ur]!{\vtwist}[r][ur]!{\xcapv@(0)}
    [lll]!{\xcapv@(0)} [ur]!{\xcapv@(0)}[ur]!{\vcross}[r]
    [lll]!{\xcapv@(0)} [ur]!{\vtwist}[r][ur]!{\xcapv@(0)}
    [lll]!{\vcross}[r][ur]!{\vcross}[r]
    [lll]!{\xcapv@(0)} [ur]!{\vcap-}[dr][ur]!{\xcapv@(0)}
}\]\caption*{$9_{40}$}\end{subfigure}
\begin{subfigure}{0.13\textwidth}\[\xygraph{ !{/r0.75pc/:}    
    !{\xcapv@(0)}[ur]!{\xcapv@(0)}[ur][d]!{\vcap}[r]
     [lll]!{\xcapv@(0)} [ur]!{\vcross}[r][ur]!{\xcapv@(0)}
     [lll]!{\vcross}[r][ur]!{\vtwist}[r]
     [lll]!{\vcross}[r][ur]!{\xcapv@(0)}[ur]!{\xcapv@(0)}
     [lll]!{\xcapv@(0)} [ur]!{\vcross}[r][ur]!{\xcapv@(0)}
     [lll]!{\xcapv@(0)} [ur]!{\xcapv@(0)}[ur]!{\vtwist}[r]
     [lll]!{\xcapv@(0)} [ur]!{\vcross}[r][ur]!{\xcapv@(0)}
     [lll]!{\vtwist}[r][ur]!{\vtwist}[r]
     [lll]!{\xcapv@(0)} [ur]!{\vcross}[r][ur]!{\xcapv@(0)}
     [lll]!{\xcapv@(0)} [ur]!{\xcapv@(0)}[ur]!{\vcap-}[dr]
}\]\caption*{$9_{41}$}\end{subfigure}
\begin{subfigure}{0.13\textwidth}\[\xygraph{ !{/r0.75pc/:}    
    !{\xcapv@(0)}[ur][d]!{\vcap}[r][ur]!{\xcapv@(0)}
     [lll]!{\vcross}[r][ur]!{\vtwist}[r]
     [lll]!{\vcross}[r][ur]!{\xcapv@(0)}[ur]!{\xcapv@(0)}
     [lll]!{\xcapv@(0)} [ur]!{\vtwist}[r][ur]!{\xcapv@(0)}
     [lll]!{\xcapv@(0)} [ur]!{\vtwist}[r][ur]!{\xcapv@(0)}
     [lll]!{\xcapv@(0)} [ur]!{\vtwist}[r][ur]!{\xcapv@(0)}
     [lll]!{\vcross}[r][ur]!{\vcross}[r]
     [lll]!{\vcross}[r][ur]!{\vcross}[r]
     [lll]!{\vcross}[r][ur]!{\xcapv@(0)}[ur]!{\xcapv@(0)}
     [lll]!{\xcapv@(0)} [ur]!{\vcap-}[r][r]!{\xcapv@(0)}
}\]\caption*{$9_{42}$}\end{subfigure}
\begin{subfigure}{0.13\textwidth}\[\xygraph{ !{/r0.75pc/:}
     !{\xcapv@(0)}[ur]!{\xcapv@(0)}
     [l]!{\vtwist}[r][d]
     [lll]!{\vcap}[r][ur]!{\vtwist}[r]
     [lll]!{\xcapv@(0)}[ur]!{\vcross}[r][ur]!{\xcapv@(0)}
     [lll]!{\vtwist}[r][ur]!{\xcapv@(0)}[ur]!{\xcapv@(0)}
     [lll]!{\xcapv@(0)} [ur]!{\vcross}[r][ur]!{\xcapv@(0)}
     [lll]!{\vtwist}[r][ur]!{\vcross}[r]
     [lll]!{\xcapv@(0)} [ur]!{\vtwist}[r][ur]!{\xcapv@(0)}
     [lll]!{\xcapv@(0)} [ur]!{\vtwist}[r][ur]!{\xcapv@(0)}
     [lll]!{\xcapv@(0)} [ur]!{\xcapv@(0)}[ur]!{\vcap-}
}\]\caption*{$9_{43}$}\end{subfigure}
\begin{subfigure}{0.13\textwidth}\[\xygraph{ !{/r0.75pc/:}
     !{\xcapv@(0)}[ur]!{\xcapv@(0)}
     [l]!{\vcross}[r][d]
     [lll]!{\vcap}[r][ur]!{\vcross}[r]
     [lll]!{\xcapv@(0)} [ur]!{\vcross}[r][ur]!{\xcapv@(0)}
     [lll]!{\vtwist}[r][ur]!{\xcapv@(0)}[ur]!{\xcapv@(0)}
     [lll]!{\xcapv@(0)} [ur]!{\vcross}[r][ur]!{\xcapv@(0)}
     [lll]!{\vtwist}[r][ur]!{\vcross}[r]
     [lll]!{\xcapv@(0)} [ur]!{\vtwist}[r][ur]!{\xcapv@(0)}
     [lll]!{\xcapv@(0)} [ur]!{\vtwist}[r][ur]!{\xcapv@(0)}
     [lll]!{\xcapv@(0)} [ur]!{\xcapv@(0)}[ur]!{\vcap-}
}\]\caption*{$9_{44}$}\end{subfigure}
\begin{subfigure}{0.13\textwidth}\[\xygraph{ !{/r0.75pc/:}
     !{\xcapv@(0)}[ur]!{\xcapv@(0)}
     [l]!{\vcross}[d]
     [ll]!{\vcap}[r][ur]!{\vcross}
     [ll]!{\xcapv@(0)} [ur]!{\vcross}[r][ur]!{\xcapv@(0)}
     [lll]!{\vtwist}[r][ur]!{\xcapv@(0)}[ur]!{\xcapv@(0)}
     [lll]!{\xcapv@(0)} [ur]!{\vcross}[r][ur]!{\xcapv@(0)}
     [lll]!{\vtwist}[r][ur]!{\vtwist}[r]
     [lll]!{\xcapv@(0)} [ur]!{\vcross}[r][ur]!{\xcapv@(0)}
     [lll]!{\xcapv@(0)} [ur]!{\vcross}[r][ur]!{\xcapv@(0)}
     [lll]!{\xcapv@(0)} [ur]!{\xcapv@(0)}[ur]!{\vcap-}
}\]\caption*{$9_{45}$}\end{subfigure}
\begin{subfigure}{0.13\textwidth}\[\xygraph{ !{/r0.75pc/:}
    !{\xcapv@(0)}[ur][d]!{\vcap}[r][ur]!{\xcapv@(0)}
     [lll]!{\xcapv@(0)} [ur]!{\xcapv@(0)}[ur]!{\vtwist}[r]
     [lll]!{\xcapv@(0)} [ur]!{\vcross}[r][ur]!{\xcapv@(0)}
     [lll]!{\vcross}[r][ur]!{\vtwist}[r]
     [lll]!{\xcapv@(0)} [ur]!{\vtwist}[r][ur]!{\xcapv@(0)}
     [lll]!{\vcross}[r][ur]!{\vtwist}[r]
     [lll]!{\xcapv@(0)} [ur]!{\xcapv@(0)}[ur]!{\vtwist}[r]
     [lll]!{\xcapv@(0)} [ur]!{\xcapv@(0)}[ur]!{\vtwist}[r]
     [lll]!{\xcapv@(0)} [ur]!{\vcap-}[dr][ur]!{\xcapv@(0)}
}\]\caption*{$9_{46}$}\end{subfigure}
\begin{subfigure}{0.13\textwidth}\[\xygraph{ !{/r0.75pc/:}
    !{\xcapv@(0)}[ur][d]!{\vcap}[r][ur]!{\xcapv@(0)}
     [lll]!{\vtwist}[r][ur]!{\vtwist}[r]
     [lll]!{\xcapv@(0)} [ur]!{\vcross}[r][ur]!{\xcapv@(0)}
     [lll]!{\vtwist}[r][ur]!{\vtwist}[r]
     [lll]!{\xcapv@(0)} [ur]!{\vcross}[r][ur]!{\xcapv@(0)}
     [lll]!{\vcross}[r][ur]!{\vtwist}[r]
     [lll]!{\vcross}[r][ur]!{\xcapv@(0)}[ur]!{\xcapv@(0)}
     [lll]!{\xcapv@(0)} [ur]!{\vcap-}[dr][ur]!{\xcapv@(0)}
}\]\caption*{$9_{47}$}\end{subfigure}
\begin{subfigure}{0.13\textwidth}\[\xygraph{ !{/r0.75pc/:}
    !{\xcapv@(0)}[ur][d]!{\vcap}[r][ur]!{\xcapv@(0)}
     [lll]!{\xcapv@(0)} [ur]!{\xcapv@(0)}[ur]!{\vcross}[r]
     [lll]!{\xcapv@(0)} [ur]!{\vtwist}[r][ur]!{\xcapv@(0)}
     [lll]!{\vcross}[r][ur]!{\vcross}[r]
     [lll]!{\xcapv@(0)} [ur]!{\vtwist}[r][ur]!{\xcapv@(0)}
     [lll]!{\vcross}[r][ur]!{\vtwist}[r]
     [lll]!{\xcapv@(0)} [ur]!{\xcapv@(0)}[ur]!{\vtwist}[r]
     [lll]!{\xcapv@(0)} [ur]!{\xcapv@(0)}[ur]!{\vtwist}[r]
     [lll]!{\xcapv@(0)} [ur]!{\vcap-}[dr][ur]!{\xcapv@(0)}
}\]\caption*{$9_{48}$}\end{subfigure}
\begin{subfigure}{0.13\textwidth}\[\xygraph{ !{/r0.75pc/:}
    !{\xcapv@(0)}[r]!{\vcap}[urr]!{\xcapv@(0)}
     [lll]!{\xcapv@(0)} [ur]!{\xcapv@(0)}[ur]!{\vtwist}[r]
     [lll]!{\xcapv@(0)} [ur]!{\vcross}[r][ur]!{\xcapv@(0)}
     [lll]!{\vtwist}[r][ur]!{\xcapv@(0)}[ur]!{\xcapv@(0)}
     [lll]!{\xcapv@(0)} [ur]!{\vcross}[r][ur]!{\xcapv@(0)}
     [lll]!{\xcapv@(0)} [ur]!{\vcross}[r][ur]!{\xcapv@(0)}
     [lll]!{\xcapv@(0)} [ur]!{\xcapv@(0)}[ur]!{\vcross}[r]
     [lll]!{\xcapv@(0)} [ur]!{\vtwist}[r][ur]!{\xcapv@(0)}
     [lll]!{\vtwist}[r][ur]!{\vcross}[r]
     [lll]!{\xcapv@(0)} [ur]!{\vcap-}[rr]!{\xcapv@(0)}
}\]\caption*{$9_{49}$}\end{subfigure}
    \caption{The $(1,2)$-representatives of the booklink types required to complete Table~\ref{table:knot_spectra}.}\label{fig:missing}
\end{figure}

\bibliographystyle{alpha}
\bibliography{./biblio}

\end{document}